\date{}
\newcommand{\Rmnum}[1]{\expandafter\@slowromancap\romannumeral #1@}
\numberwithin{equation}{section}
\newtheorem{corollary}{Corollary}[section]
\newtheorem{lemma}{Lemma}[section]
\newtheorem{proposition}{Proposition}[section]
\newtheorem{remark}{Remark}[section]
\newtheorem{theorem}{Theorem}[section]
\newcommand\theref[1]{Theorem~\ref{#1}}
\newcommand\lemref[1]{Lemma~\ref{#1}}
\newcommand\proref[1]{Proposition~\ref{#1}}
\newcommand\secref[1]{Section~\ref{#1}}
\title{Subsonic time-periodic solution to damped compressible Euler equations with large entropy}
\author[a]{Peng Qu\thanks{{e}-mail: pqu@fudan.edu.cn}}
\author[b]{Huimin Yu\thanks{Corresponding author {e}-mail: hmyu@sdnu.edu.cn}}
\author[b]{Xiaomin Zhang\thanks{{e}-mail: zxm15924687@163.com}}
\affil[a]{School of Mathematical Sciences, Shanghai Key Laboratory for Contemporary Applied Mathematics, Fudan  University, Shanghai 200433 China}
\affil[b]{Department of mathematics, Shandong Normal University, Jinan 250014 China}
\begin{document}
\begin{sloppypar}
\date{}
\maketitle
\begin{center}
\begin{minipage}{130mm}{\small
\textbf{Abstract}:
In this paper, one-dimensional nonisentropic compressible Euler equations with linear damping $\alpha(x)\rho u$ are analyzed.~We want to explore the conditions under which a subsonic temporal periodic boundary can trigger a time-periodic $C^{1}$ solution. To achieve this aim, we use a technically constructed iteration scheme and give the sufficient conditions to guarantee the existence, uniqueness and stability of the $C^{1}$ time-periodic solutions on the perturbation of a subsonic Fanno flow.~It is worthy to be pointed out that the entropy exhibits large amplitude under the assumption that the inflow sound speed is small.~However, it is crucial to assume that the boundary conditions possess a kind of dissipative structure at least on one side, which is used to cancel the nonlinear accelerating effect in the system.~The results indicate that the time-periodic feedback boundary control with dissipation can stabilize the nonisentropic compressible Euler equations around the Fanno flows.\\
\textbf{Keywords}: Nonisentropic compressible Euler equations, Time-periodic boundary, Global existence, Subsonic flow, Time-periodic solutions
\\
\textbf{Mathematics Subject Classification 2010}:  35B10, 35A01, 35Q31.}
\end{minipage}
\end{center}
\section{Introduction}

\indent\indent The compressible Euler equations are a set of partial differential equations which can be used to analyze a wide range of fluid dynamics problems, such as the flow of gases in turbomachinery, the behavior of shock waves, and the development of high-speed jets. They are of importance in understanding and predicting the motion of compressible fluids in various engineering and scientific applications. The compressible Euler equations with damping are the modified version of the standard compressible Euler equations and include an additional term to model the dissipation of energy arising from friction (see~\cite{Dafermos, DafermosC}) or other dissipative processes. The presence of damping is important for accurately modeling real-world fluid dynamics. Typically, the damping term is represented as a proportionality factor multiplying the right-hand side of the system. In this paper,we study the nonisentropic compressible Euler equations with linear damping $\alpha(x)\rho u$ in the Eulerian coordinates
\begin{equation}\label{a1}
\left\{\begin{aligned}
&\rho_{t}+(\rho u)_{x}=0,\\
&(\rho u)_{t}+(\rho u^{2}+p)_{x}=\alpha(x)\rho u,\\
&S_{t}+uS_{x}=0,
\end{aligned}\right.
\end{equation}
where $(t,x)\in\{(t,x)|t\in\mathbb{R}_{+},x\in[0,L]\}$, $\rho,u,S$ are the density, the velocity, the entropy respectively, and $p=p(\rho,S)$ is the pressure. The general pressure law we considered in this paper is
$$
p=p(\rho,S)=\rho^{\gamma}\varphi(S),
$$
where $\varphi(S)$ is a positive smooth function, the adiabatic gas exponent $1<\gamma<3$ and pressure coefficient is normalized to $1$. The damping coefficient $\alpha(x)$ is a non-positive $C^{1}$ smooth function satisfying
\begin{align}
&\alpha_{*}\leq\alpha(x)\leq0,\label{a2}
\end{align}
where $\alpha_{*}<0$ is a constant.

For nonisentropic flows, there are many works on the existence of $C^{1}$ solution under the hypothesis of the small initial data. For example, \cite{Xu} studied the asymptotic stability of smooth solutions to the multidimensional nonisentropic hydrodynamic model for semiconductors under the assumption that the initial data are a small perturbation of the stationary solutions for the thermal equilibrium state; \cite{Zhangy} investigated the global existence and asymptotic behavior of classical solutions for the 3D compressible non-isentropic damped Euler equations on a periodic domain when the initial data is near an equilibrium; \cite{Wangw} explored the global existence of classical solution to the Cauchy problem for the nonisentropic Euler–Maxwell system with a nonconstant background density when the initial perturbation is sufficiently small; \cite{Lit} analyzed the global well-posedness and large-time behavior of smooth solutions to the one-dimensional non-isentropic compressible Euler equations with relaxation when the initial datum is sufficiently close to a constant equilibrium state. Besides, the authors also analyzed finite-time blowup of classical solutions under general conditions on initial data. For the other works about the nonisentropic system with the Poisson term, readers refer to~\cite{Liy, Ruan, Hong}.

Time-periodic solutions play a crucial role in understanding the intrinsic mechanisms of natural phenomena and designing efficient algorithms for scientific and engineering problems. The exploration of these solutions has led to the development of numerous mathematical models and computational methods, which in turn have advanced our understanding of complex systems.
For further insights into this area, readers are encouraged to explore publications such as \cite{Feireisl, Hmidi, Kagei, Ji, Ma, Wang} and other related references therein. Among these works, many are driven by periodic external forces. However, in this paper, we concern about time-periodic solutions triggered by time-periodic boundary, which is of great significance. Some researchers have contributed to understanding various aspects of this subject. For instance, \cite{Yuan} investigated the existence of time-periodic supersonic solutions to the one-dimensional isentropic compressible Euler equations subjected to time-periodic boundary conditions within a finite interval, after a certain start-up time that depends on the interval's length. \cite{Mas} explored the global existence and stability of temporal periodic solutions to one-dimensional compressible non-isentropic Euler equations with the source term $\beta\rho|u|^{\alpha}u$ under small perturbations around the steady supersonic flow, wherein the boundary conditions are temporal periodic. Additionally, \cite{Qu} examined the existence, uniqueness and stability of quasilinear hyperbolic systems incorporating time-periodic dissipative boundary conditions. Furthermore, \cite{Fang} analyzed the well-posedness of the time-periodic classical solutions to the general nonhomogeneous quasilinear hyperbolic equations featuring a  weak diagonal dominant structure. In particular, \cite{Qup} devoted to the existence, stability and regularity of the subsonic time-periodic solutions to the one-dimensional isentropic compressible Euler equations with linear damping. For other more results, we refer to~\cite{Yuh, Zhangx}.

It is worth noting that the aforementioned literatures predominantly focused on investigations under small perturbations of initial data or boundary conditions.~In contrast, our research contemplates time-periodic solutions of damped nonisentropic compressible Euler equations under large amplitude of the entropy $S$ at the initial $t=0$ and the boundary $x=0$. Follow the idea used in~\cite{Qup}, where the background solutions are the constant state $(\tilde{\rho},\tilde{u})=(\underline{\rho},0)$, the non-constant background solutions lead the reduced system~\eqref{b8}-\eqref{b10} to be a hyperbolic balance laws without dissipation or even with some accelerating growth effect. Fortunately, by an ingeniously constructed iterative scheme, we cancel the accelerating growth effect by the boundary dissipation when the background sound speed is small enough. In addition, the system~\eqref{b8}-\eqref{b10} can cause derivative loss, which make certain difficulties for our later a prior estimates. However, we find the entropy is balance along the $2$-nd characteristic curve in~\eqref{b9} and then we skillfully transform the derivative of entropy over $x$ into the derivative along the $1$-st or $3$-rd characteristic curve direction, ultimately overcoming these difficulties. 

The plan of the rest of this article is as follows.  In~\secref{s2}, we reformulate the original
system to obtain a quasi-linear hyperbolic system and give some basic facts, which
will be used later. The main results (i.e.~\theref{t1}-\theref{t5}) are also stated here. In~\secref{s3}, the existence of time-periodic solutions is proved by the linearized iteration method. The stability and uniqueness of the time-periodic solution are discussed in~\secref{s4}. The~\secref{s5} and~\secref{s6} are devoted to the higher regularity and stability of the time-periodic solution respectively, if we assume the boundary has a higher regularity.

\section{Preliminaries and Main Results}\label{s2}
\indent\indent We first give some facts on the steady state(i.e.Fanno flows), which are controlled by the following ODEs:
\begin{align}\label{A5}
\left\{
\begin{aligned}
&\tilde{\rho}'\tilde{u}+\tilde{\rho}\tilde{u}'=0,\\
&\tilde{u}\tilde{u}'+\gamma\tilde{\rho}^{\gamma-2}\varphi(\tilde{S})\tilde{\rho}'
=\alpha\tilde{u},\\
&\tilde{u}\tilde{S}'=0,\\
&(\tilde{\rho},\tilde{u},\tilde{S})^{\top}|_{x=0}=(\rho_{-},u_{-},S_{-})^{\top}
\end{aligned}
\right.
\end{align}
or equivalently
\begin{align}\label{a3}
\left\{
\begin{aligned}
&\tilde{c}'\tilde{u}+\frac{\gamma-1}{2}\tilde{c}\tilde{u}'=0,\\
&\tilde{u}\tilde{u}'+\frac{2}{\gamma-1}\tilde{c}\tilde{c}'
=\alpha\tilde{u},\\
&\tilde{u}\tilde{S}'=0,\\
&(\tilde{u},\tilde{c},\tilde{S})^{\top}|_{x=0}=(u_{-},c_{-},S_{-})^{\top},
\end{aligned}
\right.
\end{align}
where $\tilde{c}=\sqrt{\gamma}\tilde{\rho}^{\frac{\gamma-1}{2}}\sqrt{\varphi(\tilde{S})}$ and $\rho_{-}, u_{-}, c_{-},S_{-}$ are positive constants satisfying $u_{-}<c_{-}=\sqrt{\gamma}\rho_{-}^{\frac{\gamma-1}{2}}\sqrt{\varphi(S_{-})}$.
The paper~\cite{Yuh} has already discussed both the supersonic and subsonic Fanno flows for isentropic Euler equations clearly. For the non-constant entropy case, noting the equation~$\eqref{A5}_{3}$ or $\eqref{a3}_{3}$, which implies that the entropy for the steady flow must be constant in the duct, we still get the following result similar to Lemma~$2.1$ in~\cite{Yuh}
\begin{lemma}\label{L1}
If $0<u_{-}<c_{-}$ and the duct length $L< L_M$, where the maximum allowance duct length $L_M$ is a positive constant only depending on $\alpha, \gamma, c_-$, and $u_-$, then the Cauchy problem~\eqref{a3} admits a unique smooth positive solution $(\tilde{u}(x),\tilde{c}(x),\tilde{S}(x))^{\top}$ which satisfies the following properties:
\begin{align}
0<u_-<\tilde{u}(x)<\tilde{c}(x)<c_-, \quad \tilde{S}(x)=S_{-}.\label{A3}
\end{align}
\end{lemma}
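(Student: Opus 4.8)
The plan is to decouple the entropy and then collapse the remaining pair of equations to a single scalar first-order ODE whose qualitative behaviour is governed by a strictly convex potential. Since we are seeking a solution with $\tilde{u}>0$, the third equation $\eqref{a3}_{3}$, namely $\tilde{u}\tilde{S}'=0$, forces $\tilde{S}'\equiv0$ and hence $\tilde{S}(x)\equiv S_{-}$; in particular $\varphi(\tilde{S})\equiv\varphi(S_{-})$ is a fixed constant, so the system reduces exactly to the isentropic Fanno system in the two unknowns $(\tilde{u},\tilde{c})$ treated in \cite{Yuh}. This already disposes of the entropy part of the conclusion.

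Next I would extract the conserved quantity. Equation $\eqref{a3}_{1}$ reads $\tilde{c}'/\tilde{c}=-\tfrac{\gamma-1}{2}\,\tilde{u}'/\tilde{u}$, which integrates to the invariant $J:=\tilde{c}\,\tilde{u}^{(\gamma-1)/2}\equiv c_{-}u_{-}^{(\gamma-1)/2}$ (equivalently, conservation of the mass flux $\tilde{\rho}\tilde{u}$). Substituting $\tilde{c}^{2}=J^{2}\tilde{u}^{-(\gamma-1)}$ into $\eqref{a3}_{2}$ eliminates $\tilde{c}$ and yields the scalar equation
\[ \tilde{u}'=\frac{\alpha(x)\,\tilde{u}^{2}}{\tilde{u}^{2}-\tilde{c}^{2}}=\frac{\alpha(x)\,\tilde{u}^{2}}{\tilde{u}^{2}-J^{2}\tilde{u}^{-(\gamma-1)}}. \]
Separating variables integrates this to the implicit relation $G(\tilde{u}(x))-G(u_{-})=\int_{0}^{x}\alpha(s)\,ds=:A(x)$, where $G(v):=v+\frac{J^{2}}{\gamma}v^{-\gamma}$.

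The analytic heart of the argument is the study of $G$. One computes $G'(v)=1-J^{2}v^{-(\gamma+1)}=1-(\tilde{c}/\tilde{u})^{2}$, so $G$ is strictly decreasing on the subsonic branch $v<u_{*}:=J^{2/(\gamma+1)}$ (the sonic speed), attains its minimum at the sonic point $u_{*}$, is strictly convex, and blows up as $v\to0^{+}$. Because the inflow is subsonic, $u_{-}/c_{-}<1$ gives $u_{-}<u_{*}$, placing the initial value on the decreasing branch. Since $\alpha\le0$, $A$ is non-positive and non-increasing; as $G$ is strictly decreasing and hence invertible on $(0,u_{*})$, the relation uniquely determines $\tilde{u}(x)=G^{-1}(G(u_{-})+A(x))$, which is non-decreasing in $x$ and stays below $u_{*}$ precisely as long as $A(x)>G(u_{*})-G(u_{-})$. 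This yields both existence and uniqueness on the subsonic branch and identifies the maximal duct length: by monotonicity of $A$, $L_{M}$ is characterized by $A(L_{M})=G(u_{*})-G(u_{-})$, which depends only on $\alpha,\gamma,c_{-},u_{-}$, and for $L<L_{M}$ the solution remains strictly subsonic on $[0,L]$.

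Finally I would convert the monotonicity into the stated bounds and verify regularity. Since $\tilde{u}$ is non-decreasing with $\tilde{u}(0)=u_{-}$ and stays below $u_{*}$, while $\tilde{c}=J\tilde{u}^{-(\gamma-1)/2}$ is strictly decreasing in $\tilde{u}$ with $\tilde{c}(0)=c_{-}$, the chain $u_{-}\le\tilde{u}(x)<\tilde{c}(x)\le c_{-}$ follows, with strict interior inequalities once genuine damping $\alpha<0$ makes $A(x)<0$ for $x>0$; positivity of $\tilde{\rho},\tilde{u},\tilde{c}$ is then immediate. Smoothness comes from the implicit function theorem, since $G$ is smooth with $G'\ne0$ on the subsonic branch, so $\tilde{u}$ inherits the regularity of $A$ (hence $C^{2}$) and $\tilde{c},\tilde{S}$ are smooth as well. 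The hard part is the sharp control of the sonic threshold, that is, ensuring the accelerating subsonic flow cannot reach $u_{*}$ before $x=L$; this is exactly what the hypothesis $L<L_{M}$ together with the convexity of $G$ is designed to guarantee, while the rest is routine once the scalar reduction is in place.
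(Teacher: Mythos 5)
Your proof is correct, and its first step coincides exactly with the only thing the paper itself does for this lemma: observe that $\eqref{a3}_{3}$ with $\tilde{u}>0$ forces $\tilde{S}\equiv S_{-}$, so the problem collapses to the isentropic Fanno system, for which the paper simply cites Lemma~2.1 of \cite{Yuh}. What you add is a self-contained proof of that cited result: the mass-flux invariant $J=\tilde{c}\,\tilde{u}^{(\gamma-1)/2}$, the reduction to $\tilde{u}'=\alpha\tilde{u}^{2}/(\tilde{u}^{2}-\tilde{c}^{2})$, and the quadrature $G(\tilde{u})-G(u_{-})=\int_{0}^{x}\alpha$ with $G(v)=v+\frac{J^{2}}{\gamma}v^{-\gamma}$, whose critical point is precisely the sonic value $u_{*}=J^{2/(\gamma+1)}$; all of these computations check out, including the equivalences $u_{-}<c_{-}\Leftrightarrow u_{-}<u_{*}$ and $\tilde{u}<u_{*}\Leftrightarrow\tilde{u}<\tilde{c}$, and your characterization $A(L_{M})=G(u_{*})-G(u_{-})$ gives an $L_{M}$ depending only on $\alpha,\gamma,c_{-},u_{-}$ as required. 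The one point worth keeping in mind is that under the paper's standing hypothesis $\alpha_{*}\le\alpha\le0$ the damping may vanish identically, in which case $\tilde{u}\equiv u_{-}$ and the inequalities in \eqref{A3} involving $u_{-}$ and $c_{-}$ are equalities rather than strict; you correctly flag that strictness requires genuine damping, and this is an imprecision in the lemma's statement (inherited from \cite{Yuh}) rather than a gap in your argument.
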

For the nonisentropic compressible Euler equations, we compute that its eigenvalues are
$$
\lambda_{1}=u-c,\quad \lambda_{2}=u,\quad \lambda_{3}=u+c,
$$
where $c=\sqrt{p_{\rho}}=\sqrt{\gamma}\rho^{\frac{\gamma-1}{2}}\sqrt{\varphi(S)}$.

Suppose $(\tilde{u},\tilde{c},\tilde{S})^{\top}$ is the steady subsonic Fanno flows given in~\lemref{L1}, then we have
\begin{align*}
\lambda_{1}(\tilde{u},\tilde{c})<0<\lambda_{2}(\tilde{u},\tilde{c})<\lambda_{3}(\tilde{u},\tilde{c}),
\end{align*}
and
\begin{align}
\lambda_{1}(u,c)<0<\lambda_{2}(u,c)<\lambda_{3}(u,c),\quad (u,c)\in\wp\label{b1}
\end{align}
for a small neighborhood $\wp$ of $(\tilde{u},\tilde{c})$.

Using the Riemann invariants
\begin{align}
r_{1}=u-\frac{2}{\gamma-1}c,\quad r_{2}=S,\quad r_{3}=u+\frac{2}{\gamma-1}c,\label{b2}
\end{align}
we write the equations~\eqref{a1} into the following diagonal form
\begin{align}\label{b3}
\left\{
\begin{aligned}
&\frac{\partial r_{1}}{\partial t}+\lambda_{1}\frac{\partial r_{1}}{\partial x}=\frac{\alpha(x)}{2}(r_{1}+r_{3})+\frac{(\gamma-1)\varphi'(r_{2})}{16\gamma\varphi(r_{2})}(r_{3}-r_{1})^{2}\frac{\partial r_{2}}{\partial x},\\
&\frac{\partial r_{2}}{\partial t}+\lambda_{2}\frac{\partial r_{2}}{\partial x}=0,\\
&\frac{\partial r_{3}}{\partial t}+\lambda_{3}\frac{\partial r_{3}}{\partial x}=\frac{\alpha(x)}{2}(r_{1}+r_{3})+\frac{(\gamma-1)\varphi'(r_{2})}{16\gamma\varphi(r_{2})}(r_{3}-r_{1})^{2}\frac{\partial r_{2}}{\partial x},
\end{aligned}\right.
\end{align}
where $\lambda_{1}=\frac{\gamma+1}{4}r_{1}+\frac{3-\gamma}{4}r_{3},~ \lambda_{2}=\frac{r_{1}+r_{3}}{2},~ \lambda_{3}=\frac{3-\gamma}{4}r_{1}+\frac{\gamma+1}{4}r_{3}$.

In the same way, we also write the steady equations~${\eqref{a3}}_{1}-\eqref{a3}_{3}$ into the equations of Riemann invariants
\begin{align}\label{b7}
\left\{
\begin{aligned}
&\tilde{\lambda}_{1}\tilde{r}'_{1}=\frac{\alpha(x)}{2}(\tilde{r}_{1}+\tilde{r}_{3}),\\
&\tilde{\lambda}_{3}\tilde{r}'_{3}=\frac{\alpha(x)}{2}(\tilde{r}_{1}+\tilde{r}_{3}),\\
&\tilde{r}_{2}=const.,
\end{aligned}
\right.
\end{align}
where $$\tilde{r}_{1}=\tilde{u}-\frac{2}{\gamma-1}\tilde{c},~~\tilde{r}_{2}=\tilde{S},~~\tilde{r}_{3}=\tilde{u}+\frac{2}{\gamma-1}\tilde{c},$$ $$\tilde{\lambda}_{1}=\frac{\gamma+1}{4}\tilde{r}_{1}+\frac{3-\gamma}{4}\tilde{r}_{3},~~ \tilde{\lambda}_{3}=\frac{3-\gamma}{4}\tilde{r}_{1}+\frac{\gamma+1}{4}\tilde{r}_{3}.$$
We further assume that the initial data and boundary conditions of equations~\eqref{b3} are
\begin{align}
t=0:~~ &r_{1}(0,x)={r}_{1_{0}}(x),\quad r_{2}(0,x)={r}_{2_{0}}(x),\quad r_{3}(0,x)={r}_{3_{0}}(x),\label{b4}\\
x=0:~~ &r_{2}(t,0)=G_{2}(t)+K_{2}(r_{1}(t,0)-\tilde{r}_{1}(0)),\label{B5}\\
&r_{3}(t,0)=G_{3}(t)+K_{3}(r_{1}(t,0)-\tilde{r}_{1}(0)),\label{b5}\\
x=L:~~ &r_{1}(t,L)=G_{1}(t)+K_{1}(r_{3}(t,L)-\tilde{r}_{3}(L)),\label{b6}
\end{align}
where $K_{2}$ is a given constant, $|K_{1}|\leq1, |K_{3}|\leq1, |K_{1}K_{3}|<1$ and $G_{i}(t)(i=1,2,3)$ are $C^{1}$ time-periodic functions with the period $P>0$, i.e.
$$
G_{i}(t+P)=G_{i}(t), i=1,2,3.
$$

Let
$$\Phi=(\Phi_{1},\Phi_{2},\Phi_{3})^{\top}=(r_{1}-\tilde{r}_{1},r_{2}-\tilde{r}_{2},r_{3}-\tilde{r}_{3})^{\top},\quad
\tilde{\Phi}=(\tilde{\Phi}_{1},\tilde{\Phi}_{2},\tilde{\Phi}_{3})^{\top}=(\tilde{r}_{1},\tilde{r}_{2},\tilde{r}_{3})^{\top},$$
then we use~\eqref{b3} and~\eqref{b7} to write the equations of $\Phi$ as follows
\begin{align}
\frac{\partial\Phi_{1}}{\partial t}+\lambda_{1}(\Phi+\tilde{\Phi})\frac{\partial\Phi_{1}}{\partial x}
=&\frac{\alpha}{2}(1-\frac{(\gamma+1)(\tilde{\Phi}_{1}+\tilde{\Phi}_{3})}{(\gamma+1)\tilde{\Phi}_{1}+(3-\gamma)\tilde{\Phi}_{3}})\Phi_{1}
+\frac{\alpha}{2}(1-\frac{(3-\gamma)(\tilde{\Phi}_{1}+\tilde{\Phi}_{3})}{(\gamma+1)\tilde{\Phi}_{1}+(3-\gamma)\tilde{\Phi}_{3}})\Phi_{3}\notag\\
&+\frac{(\gamma-1)\varphi'(\Phi_{2}+\tilde{\Phi}_{2})}{16\gamma\varphi(\Phi_{2}+\tilde{\Phi}_{2})}(\Phi_{3}+\tilde{\Phi}_{3}-\Phi_{1}-\tilde{\Phi}_{1})^{2}\frac{\partial\Phi_{2}}{\partial x},\label{b8}\\
\frac{\partial\Phi_{2}}{\partial t}+\lambda_{2}(\Phi+\tilde{\Phi})\frac{\partial\Phi_{2}}{\partial x}=&0,\label{b9}\\
\frac{\partial\Phi_{3}}{\partial t}+\lambda_{3}(\Phi+\tilde{\Phi})\frac{\partial\Phi_{3}}{\partial x}
=&\frac{\alpha}{2}(1-\frac{(\gamma+1)(\tilde{\Phi}_{1}+\tilde{\Phi}_{3})}{(3-\gamma)\tilde{\Phi}_{1}+(\gamma+1)\tilde{\Phi}_{3}})\Phi_{3}
+\frac{\alpha}{2}(1-\frac{(3-\gamma)(\tilde{\Phi}_{1}+\tilde{\Phi}_{3})}{(3-\gamma)\tilde{\Phi}_{1}+(\gamma+1)\tilde{\Phi}_{3}})\Phi_{1}\notag\\
&+\frac{(\gamma-1)\varphi'(\Phi_{2}+\tilde{\Phi}_{2})}{16\gamma\varphi(\Phi_{2}+\tilde{\Phi}_{2})}(\Phi_{3}+\tilde{\Phi}_{3}-\Phi_{1}-\tilde{\Phi}_{1})^{2}\frac{\partial\Phi_{2}}{\partial x}.\label{b10}
\end{align}
The corresponding initial data and boundary conditions are
\begin{align}
t=0:~~ &\Phi_{1}(0,x)={\Phi}_{1_{0}}(x)={r}_{1_{0}}(x)-\tilde{r}_{1}(x),\quad \Phi_{2}(0,x)={\Phi}_{2_{0}}(x)={r}_{2_{0}}(x)-\tilde{r}_{2}(x),\notag\\
&\Phi_{3}(0,x)={\Phi}_{3_{0}}(x)={r}_{3_{0}}(x)-\tilde{r}_{3}(x),\label{b11}\\
x=0:~~ &\Phi_{2}(t,0)=H_{2}(t)+K_{2}\Phi_{1}(t,0),\quad
\Phi_{3}(t,0)=H_{3}(t)+K_{3}\Phi_{1}(t,0),\label{b12}\\
x=L:~~ &\Phi_{1}(t,L)=H_{1}(t)+K_{1}\Phi_{3}(t,L),\label{b13}
\end{align}
where $H_{1}(t)=G_{1}(t)-\tilde{\Phi}_{1}(L),~ H_{2}(t)=G_{2}(t)-\tilde{\Phi}_{2}(0),~ H_{3}(t)=G_{3}(t)-\tilde{\Phi}_{3}(0)$.

Obviously, we have the following two facts:\\
$\mathbf{1}:~$By~\eqref{b1}, we get
\begin{align*}
\lambda_{1}(\Phi+\tilde{\Phi})<0<\lambda_{2}(\Phi+\tilde{\Phi})<\lambda_{3}(\Phi+\tilde{\Phi}),\quad \forall \Phi\in \Re,
\end{align*}
where $\Re$ is a small neighborhood of $O=(0,0,0)^{\top}$ corresponding to $\wp$.\\
$\mathbf{2}:~$Denote $ \mu_{i}(\Phi+\tilde{\Phi})=\lambda_{i}^{-1}(\Phi+\tilde{\Phi})(i=1,2,3)$, then there exists a positive constant $\mathcal{K}$, such that
\begin{align}\label{B13}
\mathop{\max}\limits_{i=1,2,3}\mathop{\sup}\limits_{\Phi\in \Re}|\mu_{i}(\Phi+\tilde{\Phi})|\leq \mathcal{K}.
\end{align}

Before stating the main results, we explain the notations for the use throughout this article.~$C$ always denotes a generic positive constant, which is different in different place. $C_{E}, C_{S}, C_{S}^{*}, C_{R}, M_{j}(j=0,1,2,3,\ldots)$ represent the specific constants. Next, we give the main results of this paper. Here we assume that there exists a small constant $\epsilon_{0}>0$, such that the inflow sound speed
\begin{align}
&c_{-}<\epsilon_{0}\label{a4}
\end{align}
holds in advance.
\begin{theorem}\label{t1}
(Existence of the time-periodic solution) For any given $M_{0}>0$, there exists a small enough constant $\epsilon_{1}\in(0,\epsilon_{0})$ and a positive constant $C_{E}$ such that for any given $0<\epsilon<\epsilon_{1}$ and any given $C^{1}$ smooth functions $H_{i}(t)(i=1,2,3)$ satisfying
\begin{align}
H_{i}(t+P)=H_{i}(t),\quad t>0,\label{b14}
\end{align}
\begin{align}
&\|H_{1}(t)\|_{C^{1}(\mathbb{R}_{+})}\leq\epsilon,~~  \|H_{3}(t)\|_{C^{1}(\mathbb{R}_{+})}\leq\epsilon,\label{b15}\\
&\|H_{2}(t)-M_{0}\|_{C^{0}(\mathbb{R}_{+})}\leq \epsilon ,~~\|\partial_{t}H_{2}(t)\|_{C^{0}(\mathbb{R}_{+})}\leq \epsilon,\label{B15}
\end{align}
there exists a $C^{1}$ smooth function $\Phi_{0}(x)=({\Phi}_{1_{0}}(x),{\Phi}_{2_{0}}(x),{\Phi}_{3_{0}}(x))^{\top}$ satisfying
\begin{align}
&\|{\Phi}_{1_{0}}(x)\|_{C^{1}([0,L])}\leq C_{E}\epsilon,~~ \|{\Phi}_{3_{0}}(x)\|_{C^{1}([0,L])}\leq C_{E}\epsilon, \label{b16}\\
&\|{\Phi}_{2_{0}}(x)\|_{C^{0}([0,L])}\leq C_{E}M_{0},~~ \|\partial_{x}{\Phi}_{2_{0}}(x)\|_{C^{0}([0,L])}\leq C_{E}\epsilon,\label{B16}
\end{align}
such that the initial-boundary value problem~\eqref{b8}-\eqref{b13} admits a $C^{1}$ time-periodic solution $\Phi=\Phi^{(P)}(t,x)=(\Phi_{1}^{(P)},\Phi_{2}^{(P)},\Phi_{3}^{(P)})^{\top}$ on $D=\{(t,x)|t\in \mathbb{R}_{+},x\in [0,L]\}$ which satisfies
\begin{align}
\Phi^{(P)}(t+P,x)&=\Phi^{(P)}(t,x),\quad \forall(t,x)\in D\label{b17}
\end{align}
and
\begin{align}
\|\Phi_{1}^{(P)}\|_{C^{1}(D)}\leq C_{E}\epsilon,~~ \|\Phi_{3}^{(P)}\|_{C^{1}(D)}\leq C_{E}\epsilon,\label{b18}
\end{align}
\begin{align}
\|\Phi_{2}^{(P)}\|_{C^{0}(D)}\leq C_{E}M_{0},~~ \max\{\|\partial_{t}\Phi_{2}^{(P)}\|_{C^{0}(D)},~~\|\partial_{x}\Phi_{2}^{(P)}\|_{C^{0}(D)}\}\leq C_{E}\epsilon.\label{B18}
\end{align}
\end{theorem}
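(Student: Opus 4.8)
The plan is to realize the time-periodic solution as the limit of a linearized iteration in which each iterate is itself a time-periodic solution of a linear problem, obtained by freezing the nonlinear coefficients and sources of \eqref{b8}--\eqref{b10} at the previous step. Starting from $\Phi^{(0)}\equiv 0$ (the Fanno background), I would define $\Phi^{(n+1)}$ as the $P$-periodic solution of the diagonal \emph{linear} system gotten by replacing $\lambda_i(\Phi+\tilde\Phi)$ and the coefficient functions by their values at $\Phi^{(n)}+\tilde\Phi$, and by inserting the known $\partial_x\Phi_2^{(n)}$ into the two quadratic source terms; the boundary relations \eqref{b12}--\eqref{b13} are already linear and are kept exactly. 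The target set for the iteration is the ball
\begin{align*}
\mathcal{B}=\Big\{\Phi:\ \|\Phi_1\|_{C^1(D)},\,\|\Phi_3\|_{C^1(D)}\le C_E\epsilon,\ \|\Phi_2\|_{C^0(D)}\le C_E M_0,\ \|\partial_t\Phi_2\|_{C^0(D)},\,\|\partial_x\Phi_2\|_{C^0(D)}\le C_E\epsilon\Big\},
\end{align*}
intersected with $\{\Phi(t,\cdot)\in\Re\}$ so that \eqref{b1} and \eqref{B13} apply, and with the periodicity constraint $\Phi(t+P,x)=\Phi(t,x)$. The two scales (the small $\epsilon$ for $\Phi_1,\Phi_3$ and all derivatives of $\Phi_2$, versus the large $M_0$ for $\Phi_2$ itself) are kept separate throughout.

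Since $\lambda_1<0<\lambda_2<\lambda_3$ on $\Re$ by \eqref{b1}, for the linearized problem the $1$-characteristics enter through $x=L$ while the $2$- and $3$-characteristics enter through $x=0$; I would integrate each linear transport equation along its characteristic and express the interior values through the boundary traces, closing the system by \eqref{b12}--\eqref{b13}. Because the $H_i$ are $P$-periodic, I would seek the traces $\Phi_1(\cdot,0),\Phi_1(\cdot,L),\Phi_3(\cdot,0),\Phi_3(\cdot,L)$ as $P$-periodic functions: following a $1$-characteristic from $x=L$ to $x=0$ and reflecting into a $3$-characteristic back to $x=L$ sends the incoming trace of $\Phi_1$ to $K_1K_3\Phi_1$ plus $P$-periodic data, so the round-trip reflection operator on the space of $P$-periodic continuous traces is a strict contraction precisely because $|K_1K_3|<1$ (with $|K_1|,|K_3|\le 1$). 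Its unique fixed point determines all periodic traces; integrating back along characteristics reconstructs a $P$-periodic $\Phi^{(n+1)}$, and reading it off at $t=0$ produces the periodic profile $\Phi_0^{(n+1)}$. Thus \eqref{b17} holds for every iterate and passes to the limit.

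Next I would show $\mathcal{B}$ is invariant. Equation \eqref{b9} makes $\Phi_2^{(n+1)}$ constant along $2$-characteristics, so $\|\Phi_2^{(n+1)}\|_{C^0}\le C_E M_0$ follows directly from its boundary value $H_2+K_2\Phi_1^{(n+1)}$ in \eqref{b12} together with \eqref{B15}. For $\Phi_1,\Phi_3$ the source terms in \eqref{b8},\eqref{b10} split into: (i) linear terms carrying the factor $\alpha\le0$ with bounded coefficients, the denominators being $4\tilde\lambda_1$, resp. $4\tilde\lambda_3$, which stay away from $0$ by \lemref{L1}; and (ii) the quadratic term, whose factor $(\Phi_3+\tilde\Phi_3-\Phi_1-\tilde\Phi_1)^2=(\tfrac{4}{\gamma-1}c)^2=O(c_-^2)$ is made arbitrarily small by \eqref{a4}. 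Integrating along the $1$- and $3$-characteristics and combining with the contraction from $|K_1K_3|<1$ gives $\|\Phi_1^{(n+1)}\|_{C^0},\|\Phi_3^{(n+1)}\|_{C^0}\le C_E\epsilon$, once $\epsilon_1$ and $\epsilon_0$ are small enough that the $O(c_-^2)\,\partial_x\Phi_2$ source, the $O(\epsilon)$ products, and the linear accelerating part are all dominated by the boundary dissipation.

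The genuine difficulty is the $C^1$ closure for $\Phi_1,\Phi_3$: differentiating \eqref{b8},\eqref{b10} in $x$ reproduces the factor $\partial_x\Phi_2$ multiplied by a derivative, threatening a loss of one derivative of $\Phi_2$. I would remove this using that, along the $i$-characteristic $(i=1,3)$,
$$\partial_x\Phi_2=\frac{1}{\lambda_i-\lambda_2}\big(\partial_t+\lambda_i\partial_x\big)\Phi_2,$$
which follows from $\partial_t\Phi_2=-\lambda_2\partial_x\Phi_2$ in \eqref{b9} and the strict gaps $\lambda_i-\lambda_2\ne0$ guaranteed by \eqref{b1}. This rewrites the dangerous term as a directional derivative of $\Phi_2$ along the very characteristic being integrated, so it is absorbed without independent $C^1$ control of $\Phi_2$; meanwhile $\partial_x\Phi_2,\partial_t\Phi_2$ are propagated from their boundary size $\epsilon$ (coming from $\partial_tH_2$ and $K_2\partial_t\Phi_1$ via \eqref{B15}, \eqref{b16}) to the bound $C_E\epsilon$. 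With $\mathcal{B}$ invariant, I would then prove that $\Phi^{(n)}\mapsto\Phi^{(n+1)}$ is a contraction in the weaker $C^0$ norm—coefficient and quadratic differences are $O(\epsilon)$ and the reflection factor is $|K_1K_3|<1$—so the uniform $C^1$ bounds upgrade the $C^0$ limit to a genuine $C^1$ time-periodic solution obeying \eqref{b17}--\eqref{B18}, with $\Phi_0$ its restriction to $t=0$ satisfying \eqref{b16}--\eqref{B16}. I expect this $C^1$ closure—simultaneously taming the derivative-loss term through the characteristic-direction identity and beating the linear accelerating growth through the smallness \eqref{a4} of $c_-$ and the dissipation $|K_1K_3|<1$—to be the main obstacle.
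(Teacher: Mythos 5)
Your overall strategy coincides with the paper's: a linearized iteration along characteristics with the roles of $t$ and $x$ exchanged, boundary dissipation ($|K_1K_3|<1$) beating the interior accelerating terms once $c_-$ is small, and the derivative-loss term tamed by rewriting $\partial_x\Phi_2$ as a directional derivative along the $1$-st (resp. $3$-rd) characteristic via \eqref{b9} — this is exactly the paper's identity \eqref{CC1} and its use in \eqref{c57}. One organizational difference: you keep the boundary coupling at the same iteration level and solve for the periodic traces by a round-trip reflection contraction, whereas the paper staggers the coupling (e.g.\ $\Phi_1^{(l)}(t,L)=\mathcal{H}_1(t)+K_1\Phi_3^{(l-1)}(t,L)$), so each linear problem is solved by a single explicit integration and periodicity of each iterate follows from uniqueness of the linear problem with periodic data; your variant is workable but adds an inner fixed-point that the paper avoids.

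There is, however, a genuine gap at the very last step. You conclude that ``the uniform $C^1$ bounds upgrade the $C^0$ limit to a genuine $C^1$ time-periodic solution.'' They do not: uniform $C^1$ bounds together with $C^0$ convergence only yield a Lipschitz ($W^{1,\infty}$) limit, since the derivatives converge merely weak-$*$ in $L^\infty$. To obtain a $C^1$ limit one must either prove the contraction at the level of first derivatives or establish uniform equicontinuity of $\{\partial_t\Phi_i^{(l)},\partial_x\Phi_i^{(l)}\}$ and invoke Arzel\`a--Ascoli. The paper does the latter, and it is not a formality: the modulus-of-continuity estimates \eqref{c23} and \eqref{c25}, built from the concave majorant $\Omega(\delta)$ in \eqref{c75} and propagated through the characteristics (including the comparison of two nearby characteristics in \eqref{c77} and the splitting of a general pair of points into on-characteristic and equal-$x$ pieces in \eqref{c85}), occupy a substantial part of the proof of Proposition \ref{p1}. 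Your proposal contains no substitute for this step. A second, smaller point: you start the iteration from $\Phi^{(0)}\equiv 0$, but since $\Phi_2$ is only $O(M_0)$ rather than $O(\epsilon)$, the first increment $\|\Phi_2^{(1)}-\Phi_2^{(0)}\|_{C^0}$ would then be of size $M_0$ and the geometric series \eqref{c15} fails at $l=1$; the paper starts from $\Phi_2^{(0)}=M_0$ as in \eqref{c10} precisely to keep every increment of size $O(\epsilon\eta^{l})$.
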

\begin{theorem}\label{t2}
($C^{0}$ Stability of the time-periodic solution) There exists a small constant $\epsilon_{2}\in(0,\epsilon_{1})$ and a constant $C_{S}>0$, such that for any given $\epsilon\in(0,\epsilon_{2})$ and any given $C^{1}$ smooth functions $\Phi_{0}=\Phi_{0}(x)$ and $H_{i}(t)(i=1,2,3)$ satisfying \eqref{b16}-\eqref{B16} and \eqref{b14}-\eqref{B15} with certain compatibilities, the initial-boundary value problem \eqref{b8}-\eqref{b13} has a unique global $C^{1}$ classical solution $\Phi=\Phi(t,x)$ on $D=\{(t,x)|t\in\mathbb{R}_{+},x\in[0,L]\}$ satisfying
\begin{align}
\|\Phi(t,\cdot)-\Phi^{(P)}(t,\cdot)\|_{C^{0}}\leq C_{S}\epsilon\xi^{[t/T_{0}]},\quad\forall t\geq0,\label{b19}
\end{align}
where $\Phi^{(P)}$, depending on $H_{i}(t)(i=1,2,3)$, is the time-periodic solution given through \theref{t1}, $\xi\in(0,1)$ is a constant, $T_{0}=\mathop{\max}\limits_{i=1,2,3}\mathop{\sup}\limits_{\Phi\in \Re}\frac{L}{|\lambda_{i}(\Phi+\tilde{\Phi})|}$ and $[t/T_{0}]$ denotes the largest integer smaller than $t/T_{0}$.
\end{theorem}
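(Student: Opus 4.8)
The plan is to compare an arbitrary solution $\Phi$ of \eqref{b8}-\eqref{b13} with the time-periodic solution $\Phi^{(P)}$ from \theref{t1} by studying the difference $W:=\Phi-\Phi^{(P)}$, and to show that advancing the data over one slab of width $T_{0}$ contracts the $C^{0}$ norm of $W$ by a fixed factor $\xi\in(0,1)$. Since $\Phi$ and $\Phi^{(P)}$ satisfy the boundary relations \eqref{b12}-\eqref{b13} with the \emph{same} data $H_{i}$, subtracting the two diagonal systems produces a quasilinear system for $W$ with \emph{homogeneous} boundary conditions
\begin{align*}
W_{2}(t,0)=K_{2}W_{1}(t,0),\quad W_{3}(t,0)=K_{3}W_{1}(t,0),\quad W_{1}(t,L)=K_{1}W_{3}(t,L),
\end{align*}
and with $\|W(0,\cdot)\|_{C^{0}}\leq C\epsilon$: indeed $W_{1},W_{3}$ start at size $O(\epsilon)$ by \eqref{b16}, while for $W_{2}$ the corner compatibility at $(0,0)$ gives $W_{2}(0,0)=K_{2}W_{1}(0,0)=O(\epsilon)$ and $\partial_{x}W_{2}(0,\cdot)=O(\epsilon)$ by \eqref{B16} and \eqref{B18}, whence $\|W_{2}(0,\cdot)\|_{C^{0}}=O(\epsilon)$.

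First I would settle global existence and uniqueness. A local $C^{1}$ solution exists by the standard theory for quasilinear hyperbolic initial-boundary value problems, and it extends to all $t$ once a uniform a priori $C^{1}$ bound is in force; that bound is furnished by exactly the estimates behind \theref{t1}, in which the boundary dissipation $|K_{1}K_{3}|<1$ together with the smallness \eqref{a4} of the inflow sound speed absorbs the accelerating growth produced by the non-constant Fanno background. Uniqueness then follows by applying the difference estimate below to two solutions sharing the same initial and boundary data, which forces their difference to vanish.

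The heart of the matter is the decay estimate, obtained by integrating the equations for $W$ along the characteristics of the $\Phi$-flow. Along the $i$-th characteristic,
\begin{align*}
\frac{dW_{i}}{dt}=-\big(\lambda_{i}(\Phi+\tilde\Phi)-\lambda_{i}(\Phi^{(P)}+\tilde\Phi)\big)\partial_{x}\Phi_{i}^{(P)}+\big(f_{i}(\Phi)-f_{i}(\Phi^{(P)})\big),
\end{align*}
where $f_{i}$ is the source on the right of the $i$-th equation of \eqref{b8}-\eqref{b10}, the first term is $O(\epsilon|W|)$ by \eqref{b18}-\eqref{B18}, and the purely algebraic part of $f_{i}(\Phi)-f_{i}(\Phi^{(P)})$ is $O(|W|)$ with a damping self-coupling of favorable (dissipative) sign. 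The step I expect to be the main obstacle is the entropy contribution $g(\Phi)\partial_{x}W_{2}+\big(g(\Phi)-g(\Phi^{(P)})\big)\partial_{x}\Phi_{2}^{(P)}$, with $g=\tfrac{(\gamma-1)\varphi'}{16\gamma\varphi}(r_{3}-r_{1})^{2}$, because it carries a \emph{spatial derivative} of $W_{2}$, i.e.\ the derivative loss flagged in the introduction. To defeat it I exploit that $\Phi_{2}$ is transported by \eqref{b9}: subtracting the two copies of \eqref{b9} gives $\partial_{t}W_{2}+\lambda_{2}\partial_{x}W_{2}=O(\epsilon|W|)$, so along the $i$-characteristic ($i=1,3$) one may write $\partial_{x}W_{2}=(\lambda_{i}-\lambda_{2})^{-1}\big(\tfrac{dW_{2}}{dt}+O(\epsilon|W|)\big)$ with $\lambda_{i}-\lambda_{2}=\pm c$. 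After integration by parts in $t$ along the characteristic, the coefficient $g(\lambda_{i}-\lambda_{2})^{-1}=O(c)$ multiplies $W_{2}$ in the boundary terms, while the interior term carries $\tfrac{d}{dt}\big(g(\lambda_{i}-\lambda_{2})^{-1}\big)=O(\epsilon)$ thanks to the a priori $C^{1}$ bounds. Because $c=O(\epsilon_{0})$ by \eqref{a4}, every piece of this term is bounded by $C(\epsilon_{0}+\epsilon)\|W\|_{C^{0}}$, and no derivative of $W$ survives.

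Finally I would close the contraction. Following a wave around the loop $x=L\to x=0\to x=L$, the incoming $1$-characteristic acquires the factor $K_{1}$ at $x=L$ and the reflected $3$-characteristic the factor $K_{3}$ at $x=0$, so that over one loop of duration at most $T_{0}$ the amplitude of $(W_{1},W_{3})$ is multiplied by $|K_{1}K_{3}|<1$ modulated by the interior evolution, whose dissipative self-coupling and small-sound-speed-controlled cross-coupling must be shown to keep the composite return map a strict contraction; meanwhile $W_{2}$ is slaved to $W_{1}$ through $W_{2}(t,0)=K_{2}W_{1}(t,0)$ and the transport \eqref{b9}, its dependence on the initial data being flushed out after time $T_{0}$. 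Assembling these into a closed system of inequalities for $E_{n}:=\sup_{nT_{0}\le t\le(n+1)T_{0}}\|W(t,\cdot)\|_{C^{0}}$ and choosing $\epsilon_{2}$ so small that the resulting factor $\xi\in(0,1)$, an induction on $n$ yields $E_{n}\leq\xi E_{n-1}$ and hence $E_{n}\leq C_{S}\epsilon\,\xi^{n}$, which is precisely \eqref{b19}.
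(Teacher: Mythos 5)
Your overall architecture coincides with the paper's: form $W=\Phi-\Phi^{(P)}$, observe the boundary conditions become homogeneous, integrate the difference system along the characteristics of the $\Phi$-flow, kill the derivative loss by rewriting $\partial_x W_2$ through the transport equation \eqref{b9} as a directional derivative along the $1$- or $3$-characteristic with an $O(c)=O(\epsilon_0)$ coefficient (this is exactly the identity \eqref{CC1} and the first term of \eqref{d8}), and run an induction over slabs of width $T_0$. Your observation that the corner compatibility $W_2(0,0)=K_2W_1(0,0)$ together with \eqref{B16}, \eqref{B18} forces $\|W_2(0,\cdot)\|_{C^0}=O(\epsilon)$ despite both entropies being $O(M_0)$ is correct and is needed to start the induction.

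The gap is at the contraction step itself. You assert that the interior cross-coupling between $W_1$ and $W_3$ is ``small-sound-speed-controlled,'' but only part of it is. In \eqref{c1} the coefficient of $\Phi_3$ splits into $\frac{\alpha(\gamma-1)(\tilde{\Phi}_{1}+\tilde{\Phi}_{3})}{(\gamma+1)\tilde{\Phi}_{1}+(3-\gamma)\tilde{\Phi}_{3}}\mu_{1}$, which is indeed $O(\epsilon_0)$ by \eqref{c52} since $\tilde{\Phi}_{1}+\tilde{\Phi}_{3}=2\tilde u=O(c_-)$, and the term $\frac{\alpha}{2}\bigl(1-\frac{(\gamma+1)(\tilde{\Phi}_{1}+\tilde{\Phi}_{3})}{(\gamma+1)\tilde{\Phi}_{1}+(3-\gamma)\tilde{\Phi}_{3}}\bigr)\mu_{1}(\tilde{\Phi})\Phi_{3}$, which is $O(|\alpha_*|\mathcal{K})$ --- not small, and the source of the ``accelerating growth effect.'' Your loop map $L\to0\to L$ therefore picks up, besides $|K_1K_3|$, an $O(1)$ transfer of amplitude between the two families, and the claim that the composite return map is a strict contraction does not follow from $|K_1K_3|<1$ and $c_-<\epsilon_0$ alone. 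The paper's mechanism is the pair of integrating factors $F_1,F_3$ (with $1\le F_1,F_3\le\mathcal{M}$, $F_1'\le0\le F_3'$, see \eqref{c42}): the dangerous cross term is absorbed into $-F_1'$, so that after one crossing the estimate \eqref{d10} reads $\frac{|K_1|+F_1-1}{F_1}\le 1-\frac{1-|K_1|}{\mathcal{M}}<1$ plus $O(\epsilon_0+\epsilon)$ corrections, which is why the paper first reduces to the case $|K_1|<1$, $|K_3|<1$ and chooses $\xi>1-\frac{1-K}{\mathcal{M}}$. Without this (or an equivalent quantitative control of the interior amplification per crossing), the final induction $E_n\le\xi E_{n-1}$ is not established. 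The remainder of your outline, including the global existence via the a priori bounds of Lemma~\ref{l1} and the slaving of $W_2$ to $W_1$ through $W_2(t,0)=K_2W_1(t,0)$ (which is why the paper's bound for $\Phi_2$ lags one power of $\xi$ behind), matches the paper.
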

\indent The uniqueness of the time-periodic solution is a direct consequence from~\theref{t2} by taking $t\rightarrow+\infty$.
\begin{corollary}\label{t3}
(Uniqueness of the time-periodic solution) There exists a constant $\epsilon_{3}\in(0,\epsilon_{2})$, such that for any given $\epsilon\in(0,\epsilon_{3})$ and any given $C^{1}$ smooth functions $H_{i}(t)(i=1,2,3)$ satisfying~\eqref{b14}-\eqref{B15}, the corresponding time-periodic solution $\Phi=\Phi^{(P)}(t,x)$ obtained in~\theref{t1} is unique.
\end{corollary}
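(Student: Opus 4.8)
The plan is to deduce uniqueness directly from the $C^{0}$ stability estimate in \theref{t2}, exploiting the fact that a time-periodic solution is itself a particular global-in-time solution. First I would take two $C^{1}$ time-periodic solutions $\Phi^{(P)}$ and $\hat{\Phi}^{(P)}$ of the problem \eqref{b8}-\eqref{b13} associated with the \emph{same} boundary data $H_{i}(t)$ ($i=1,2,3$), each furnished by \theref{t1} and hence each obeying the a priori bounds \eqref{b18}-\eqref{B18}. Restricting these bounds to $t=0$ shows that the initial slice $\hat{\Phi}^{(P)}(0,\cdot)$ meets exactly the smallness requirements \eqref{b16}-\eqref{B16} imposed on initial data in \theref{t2}, and the compatibility conditions hold automatically since $\hat{\Phi}^{(P)}$ is a genuine $C^{1}$ solution. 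I may therefore regard $\hat{\Phi}^{(P)}$ as the generic global solution $\Phi$ in \theref{t2}, with $\Phi^{(P)}$ playing the role of the reference periodic solution there.

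The decisive step is then to combine the stability decay with time-periodicity. Writing $\Psi(t,x)=\hat{\Phi}^{(P)}(t,x)-\Phi^{(P)}(t,x)$, estimate \eqref{b19} gives
\begin{align*}
\|\Psi(t,\cdot)\|_{C^{0}}\leq C_{S}\epsilon\,\xi^{[t/T_{0}]},\qquad \forall\, t\geq 0,
\end{align*}
with $\xi\in(0,1)$, while both summands share the period $P$, so $\Psi(t+P,x)=\Psi(t,x)$. Fixing an arbitrary $t\in[0,P)$ and evaluating the estimate along the sequence $t+nP$ for $n\in\mathbb{N}$, periodicity yields
\begin{align*}
\|\Psi(t,\cdot)\|_{C^{0}}=\|\Psi(t+nP,\cdot)\|_{C^{0}}\leq C_{S}\epsilon\,\xi^{[(t+nP)/T_{0}]}\longrightarrow 0\quad\text{as } n\to\infty.
\end{align*}
Since the left-hand side does not depend on $n$, it must vanish, whence $\Psi(t,\cdot)\equiv 0$ for every $t\in[0,P)$ and, by periodicity, for all $t\geq 0$. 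Thus $\hat{\Phi}^{(P)}\equiv\Phi^{(P)}$, which is the asserted uniqueness.

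The only genuinely technical point, and what I expect to be the main obstacle, is the calibration of the threshold $\epsilon_{3}$ and the associated constants. One must verify that $\epsilon_{3}\in(0,\epsilon_{2})$ can be chosen (shrinking $\epsilon_{2}$ from \theref{t2} and $\epsilon_{1}$ from \theref{t1} if necessary) so that every time-periodic solution produced by \theref{t1} for $\epsilon<\epsilon_{3}$ lands inside the smallness regime \eqref{b16}-\eqref{B16} under which \theref{t2} is valid; this amounts to matching the output constant $C_{E}$ of \theref{t1} against the admissibility constants of \theref{t2}. Once this bookkeeping is settled, the argument above applies verbatim and no further estimates are required.
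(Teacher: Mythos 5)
Your proposal is correct and follows exactly the route the paper takes: the paper states that uniqueness is a direct consequence of \theref{t2} by letting $t\rightarrow+\infty$, which is precisely your argument of viewing a second periodic solution as the generic global solution in the stability estimate \eqref{b19} and combining the decay $\xi^{[t/T_{0}]}\to 0$ with time-periodicity. Your additional bookkeeping (verifying that the second periodic solution's initial slice satisfies \eqref{b16}--\eqref{B16} and the compatibility conditions) is a useful elaboration of details the paper leaves implicit, but the approach is the same.
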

\begin{theorem}\label{t4}
(Regularity of the time-periodic solution) If $\varphi(S)$ is a positive $C^{3}$ smooth function and $H_{i}(t)(i=1,2,3)$ satisfy~\eqref{b14}-\eqref{B15} and possess further $W^{2,\infty}$ regularity with
\begin{align}\label{b22}
\mathop{\max}\limits_{i=1,2,3}\|H_{i}''(t)\|_{L^{\infty}(\mathbb{R}_{+})}\leq M_{0}<+\infty,
\end{align}
then there exist constants $C_{R}>0$ and $\epsilon_{4}\in(0,\epsilon_{1})$, such that for any given $\epsilon\in(0,\epsilon_{4})$, the time-periodic solution $\Phi=\Phi^{(P)}(t,x)$ provided by~\theref{t1} is also a $W^{2,\infty}$ function with
\begin{align}\label{b23}
\max\{\|\partial_{t}^{2}\Phi^{(P)}\|_{L^{\infty}(D)},\|\partial_{t}\partial_{x}\Phi^{(P)}\|_{L^{\infty}(D)},\|\partial_{x}^{2}\Phi^{(P)}\|_{L^{\infty}(D)}\}
\leq(1+\mathcal{K})^{2}C_{R}<+\infty,
\end{align}
where $\mathcal{K}$ is defined in~\eqref{B13}.
\end{theorem}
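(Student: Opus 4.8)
The plan is to reduce the bound on all second-order derivatives of $\Phi^{(P)}$ to a bound on the pure second time-derivatives $\partial_t^2\Phi^{(P)}$, and then to produce the latter by differentiating the governing system twice in $t$ and re-running the characteristic a priori estimate that already yielded \theref{t1}. The reduction rests on the fact that every characteristic speed is bounded away from zero, $|\mu_i|=|\lambda_i^{-1}|\le\mathcal{K}$ by \eqref{B13}. Reading the diagonal equations \eqref{b8}--\eqref{b10} as $\partial_x\Phi_i=\mu_i(\Phi+\tilde{\Phi})(f_i-\partial_t\Phi_i)$, where $f_i$ denotes the right-hand side, trades one space derivative for a time derivative at the cost of a factor $\mu_i$; differentiating this identity once more and inserting $\partial_t\partial_x\Phi_i=\partial_t[\mu_i(f_i-\partial_t\Phi_i)]$ expresses $\partial_x^2\Phi^{(P)}$ and $\partial_t\partial_x\Phi^{(P)}$ through $\partial_t^2\Phi^{(P)}$ plus the already controlled first-order quantities \eqref{b18}--\eqref{B18}, each of the two traded derivatives contributing a factor $(1+\mathcal{K})$. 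This is exactly the origin of the prefactor $(1+\mathcal{K})^2$ in \eqref{b23}. To make these manipulations rigorous, i.e. to know a priori that $\partial_t^2\Phi^{(P)}$ exists, I would work with the time-difference quotients $\Phi^{(P)}(t+h,x)-\Phi^{(P)}(t,x)$: since the interior equations carry no explicit $t$-dependence, these quotients solve a linear problem with periodic data, and the uniform bounds obtained below let me pass to the limit $h\to0$ twice.

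For the second time derivative set $W_i=\partial_t^2\Phi_i$ and differentiate \eqref{b8}--\eqref{b10} twice in $t$. Because $\Phi^{(P)}$ is now a fixed $C^1$ field obeying the smallness \eqref{b18}--\eqref{B18}, the outcome is a \emph{linear} diagonal hyperbolic system
\[
\partial_t W_i+\lambda_i(\Phi^{(P)}+\tilde{\Phi})\,\partial_x W_i=\sum_{j}a_{ij}W_j+b_i,\qquad i=1,2,3,
\]
whose coefficients $a_{ij}$ are either $O(\epsilon)$ or multiples of $\alpha(x)$, and whose inhomogeneities $b_i$ are controlled by the $C^1$ data together with $\|H_i''\|_{L^{\infty}}\le M_0$. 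Differentiating the boundary relations \eqref{b12}--\eqref{b13} twice in $t$ furnishes boundary conditions for $W$ of exactly the same dissipative form, with the reflection coefficients $K_1,K_2,K_3$ unchanged and inhomogeneous terms $H_i''(t)$; the structural hypotheses $|K_1|\le1$, $|K_3|\le1$, $|K_1K_3|<1$ therefore still supply the boundary dissipation that was used in \theref{t1} to absorb the nonlinear accelerating effect. Running the same periodic-in-time fixed-point argument as in \theref{t1}--\theref{t2}, now for this linear problem and with the additional smallness of the inflow sound speed \eqref{a4} keeping the source multipliers small, yields a unique time-periodic $L^{\infty}$ solution with $\|W\|_{L^{\infty}(D)}\le C_R$; by uniqueness of such periodic solutions it coincides with $\partial_t^2\Phi^{(P)}$.

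The genuinely delicate point, and the step I expect to be the main obstacle, is the derivative loss caused by the $\partial_x\Phi_2$ factor in the sources of \eqref{b8} and \eqref{b10}: differentiating those sources in $t$ produces $\partial_x\partial_t\Phi_2$, and a second differentiation threatens $\partial_x^2\Phi_2$, which the transport equation \eqref{b9} does not control on its own. I would resolve this precisely as the introduction indicates, by never estimating spatial derivatives of the entropy in isolation. From \eqref{b9} one has $\partial_x\Phi_2=-\mu_2(\Phi^{(P)}+\tilde{\Phi})\,\partial_t\Phi_2$, and differentiating this relation converts every occurrence of $\partial_x\partial_t\Phi_2$ and $\partial_x^2\Phi_2$ into $\partial_t^2\Phi_2=W_2$ plus first-order terms. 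Substituting these identities into $b_1$ and $b_3$ closes the $W$-system: the would-be lost derivative reappears as the unknown $W_2$ carrying the factor $(\Phi_3+\tilde{\Phi}_3-\Phi_1-\tilde{\Phi}_1)^2$, which is of order $(\epsilon+c_-)^2$ under \eqref{a4}, so it can be absorbed into the left-hand side of the a priori estimate. The hypothesis $\varphi\in C^3$ enters exactly here, guaranteeing that the twice-differentiated pressure coefficient $\varphi'/\varphi$ and its derivatives stay bounded.

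Finally, combining $\|\partial_t^2\Phi^{(P)}\|_{L^{\infty}(D)}\le C_R$ with the space-to-time reduction of the first paragraph gives $\|\partial_t\partial_x\Phi^{(P)}\|_{L^{\infty}(D)}$ and $\|\partial_x^2\Phi^{(P)}\|_{L^{\infty}(D)}$ bounded by $(1+\mathcal{K})^2C_R$, which is precisely \eqref{b23}. The time-periodicity of $\partial_t^2\Phi^{(P)}$, inherited from the periodicity of the $H_i''$ and of $\Phi^{(P)}$ itself, then guarantees that these bounds hold uniformly over all of $D$.
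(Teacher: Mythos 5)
Your strategy shares its analytic core with the paper's: differentiate twice in time, use the dissipative boundary reflection together with the integrating factors $F_{1},F_{3}$ and integration along characteristics to bound $\partial_{t}^{2}$, and then recover $\partial_{t}\partial_{x}$ and $\partial_{x}^{2}$ from the equations at the cost of factors of $\mathcal{K}$. The framework, however, is genuinely different. The paper never differentiates $\Phi^{(P)}$ itself: it re-runs the linearized iteration \eqref{c4}--\eqref{c9} of \secref{s3} and proves in \proref{p2}, by induction on $l$, the uniform bounds $\|\partial_{t}^{2}\Phi^{(l)}\|_{L^{\infty}}\leq C_{R}$, $\|\partial_{t}\partial_{x}\Phi^{(l)}\|_{L^{\infty}}\leq\mathcal{K}C_{R}$, $\|\partial_{x}^{2}\Phi^{(l)}\|_{L^{\infty}}\leq\mathcal{K}^{2}C_{R}$; \theref{t4} then follows from weak$^{*}$ compactness in $W^{2,\infty}$ combined with the strong $C^{1}$ convergence $\Phi^{(l)}\rightarrow\Phi^{(P)}$. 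This sidesteps the question you address with difference quotients, namely whether $\partial_{t}^{2}\Phi^{(P)}$ exists a priori: the second derivatives of the limit are obtained as weak$^{*}$ limits rather than by differentiating the solution. Your direct route is admissible in principle and avoids repeating the iteration, but the sentence ``by uniqueness of such periodic solutions it coincides with $\partial_{t}^{2}\Phi^{(P)}$'' is circular unless the second-order difference-quotient bounds are actually carried out; the paper's detour through the iterates is exactly what makes this point cost-free.

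The step that does not work as you state it is the resolution of the derivative loss. Differentiating the source of \eqref{b8} twice in $t$ produces $\partial_{t}^{2}\partial_{x}\Phi_{2}=\partial_{x}W_{2}$, a \emph{third} derivative of the entropy, and no algebraic substitution removes it: using $\partial_{x}\Phi_{2}=-\mu_{2}\partial_{t}\Phi_{2}$ or the $W_{2}$-equation merely trades $\partial_{x}W_{2}$ for $\partial_{t}W_{2}=\partial_{t}^{3}\Phi_{2}$ and back, so your claim that everything reduces to ``$W_{2}$ plus first-order terms'' is false, and the linear $W$-system as you describe it is not closed. What actually saves the estimate --- and what the paper's identity \eqref{CC1} is engineered for --- is that the surviving second-order combination is $\partial_{x}W_{2}+\mu_{1}\partial_{t}W_{2}$, the total derivative of $W_{2}$ along the very characteristic over which the $W_{1}$-equation is integrated, carrying the small coefficient $-\frac{\varphi'}{4\gamma\varphi}(\Phi_{3}+\tilde{\Phi}_{3}-\Phi_{1}-\tilde{\Phi}_{1})$. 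An integration by parts along that characteristic then leaves only $W_{2}$ itself times first-order quantities, plus controlled boundary terms; the analogous device (with $\Psi_{2}^{(l-1)}$ in place of $W_{2}$) is already what makes the paper's first-order estimates \eqref{c57}--\eqref{c64} close, since $\partial_{x}\Psi_{2}^{(l-1)}$ and $\partial_{t}\Psi_{2}^{(l-1)}$ are likewise uncontrolled there. You correctly identify this as the delicate point and correctly locate where $\varphi\in C^{3}$ enters, but the mechanism you propose for it needs to be replaced by the along-characteristic integration by parts at both the once- and twice-differentiated levels.
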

\begin{theorem}\label{t5}
($C^{1}$ Stability of the time-periodic solution) Assume that~\eqref{b22} holds, then there exist constants $C_{S}^{*}>0$ and $\epsilon_{5}\in(0,\min\{\epsilon_{2},\epsilon_{4}\})$, such that for any given $\epsilon\in(0,\epsilon_{5})$, we have not only the $C^{0}$ convergence result~\eqref{b19} in~\theref{t2}, but also the $C^{1}$ exponential convergence as
\begin{align}
\max\{\|\partial_{t}\Phi(t,\cdot)-\partial_{t}\Phi^{(P)}(t,\cdot)\|_{C^{0}},\|\partial_{x}\Phi(t,\cdot)-\partial_{x}\Phi^{(P)}(t,\cdot)\|_{C^{0}}\}
\leq (1+\mathcal{K})&C_{S}^{*}\epsilon\xi^{[t/T_{0}]},\notag\\
&\forall t\geq0.\label{b24}
\end{align}
\end{theorem}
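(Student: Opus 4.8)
The plan is to bootstrap the $C^0$ stability of \theref{t2} up to first derivatives by differentiating the problem in $t$ and exploiting that the dissipative boundary conditions \eqref{b12}--\eqref{b13} become \emph{homogeneous} after such differentiation. Set $W=\Phi-\Phi^{(P)}$. Since $\Phi$ and $\Phi^{(P)}$ solve the same problem \eqref{b8}--\eqref{b13} with identical $H_i$, the difference $W$ satisfies a linearized diagonal system whose principal part retains the separated speeds $\lambda_1<0<\lambda_2<\lambda_3$ on $\Re$, and obeys the homogeneous relations $W_2(t,0)=K_2W_1(t,0)$, $W_3(t,0)=K_3W_1(t,0)$, $W_1(t,L)=K_1W_3(t,L)$. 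By \theref{t2} we already know $\|W(t,\cdot)\|_{C^0}\le C_S\epsilon\,\xi^{[t/T_0]}$; it remains to prove the same geometric decay for $\partial_tW$ and $\partial_xW$.

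First I would differentiate \eqref{b8}--\eqref{b10} and the boundary conditions \eqref{b12}--\eqref{b13} in $t$ and put $V_i=\partial_tW_i$. Because the boundary data $H_i(t)$ are common to both solutions, their $t$-derivatives cancel in the difference and $V$ inherits exactly the homogeneous dissipative structure $V_2(t,0)=K_2V_1(t,0)$, $V_3(t,0)=K_3V_1(t,0)$, $V_1(t,L)=K_1V_3(t,L)$, with $|K_1|\le1$, $|K_3|\le1$, $|K_1K_3|<1$. Differentiating the interior equations yields transport equations for the $V_i$ along the same characteristics, whose right-hand sides split into (i) terms linear in $V$ with coefficients proportional to $\alpha$, to the inflow sound speed $c_-$, or to the $C^1$ size of the solution, hence uniformly small once $\epsilon<\epsilon_5$ and $c_-<\epsilon_0$; and (ii) inhomogeneous terms bounded by $\|W(t,\cdot)\|_{C^0}$ multiplied by the second derivatives of $\Phi^{(P)}$. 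The latter are finite precisely because, under assumption \eqref{b22}, \theref{t4} guarantees $\Phi^{(P)}\in W^{2,\infty}(D)$ with the bound \eqref{b23}.

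The delicate point, and the one I expect to be the main obstacle, is the entropy coupling $\tfrac{(\gamma-1)\varphi'}{16\gamma\varphi}(\cdots)^2\partial_x\Phi_2$ in \eqref{b8} and \eqref{b10}: a naive $t$-differentiation would produce the second-order quantity $\partial_t\partial_x\Phi_2$ and lose a derivative. I would remove this obstruction exactly as signalled in the introduction, using the entropy balance \eqref{b9}. Since $\partial_t\Phi_2+\lambda_2\partial_x\Phi_2=0$ with $\lambda_2>0$, one may write $\partial_x\Phi_2=-\mu_2\,\partial_t\Phi_2$, i.e. convert the entropy slope into a time derivative, equivalently into a directional derivative along the $1$st or $3$rd characteristic, $\partial_x\Phi_2=-\tfrac{1}{\lambda_2-\lambda_1}(\partial_t+\lambda_1\partial_x)\Phi_2$. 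Substituting this before differentiating eliminates $\partial_x\Phi_2$ from the $\Phi_1$- and $\Phi_3$-equations, so that the first derivatives $(\partial_t\Phi_1,\partial_t\Phi_2,\partial_t\Phi_3)$ close into a genuine first-order system and the corresponding difference system for $(V_1,V_2,V_3)$ carries no uncontrolled second-order term; here $|\mu_2|\le\mathcal{K}$ by \eqref{B13}.

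With the $V$-system closed, I would run the characteristic method. Integrating each $V_i$ along its characteristic and using $\lambda_1<0<\lambda_2<\lambda_3$, a signal travelling from $x=L$ to $x=0$ and back is amplified by the net factor $|K_1K_3|<1$ at each boundary-to-boundary passage, giving a strict contraction over every time layer of length $T_0$; the small linear self-interaction is absorbed by taking $\epsilon_5$ small, while the inhomogeneous terms decay like $\xi^{[t/T_0]}$ by \theref{t2}, so the geometric decay is preserved and $\|\partial_tW(t,\cdot)\|_{C^0}\le C\epsilon\,\xi^{[t/T_0]}$. Finally I would recover the spatial derivative directly from the equations: writing $\partial_x\Phi_i=\mu_i(F_i-\partial_t\Phi_i)$ for each solution and subtracting gives $\|\partial_xW(t,\cdot)\|_{C^0}\le\mathcal{K}\,\|\partial_tW(t,\cdot)\|_{C^0}+C\,\|W(t,\cdot)\|_{C^0}$, where the factor $\mathcal{K}$ comes from the bound \eqref{B13} on $\mu_i=\lambda_i^{-1}$. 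Combining the $\partial_t$- and $\partial_x$-estimates produces the common prefactor $(1+\mathcal{K})$ and yields \eqref{b24} with $C_S^*$ depending only on $C_S$, $\gamma$ and the data. Once the entropy-induced derivative loss has been defused as above, the boundary-reflection contraction and the recovery of $\partial_xW$ are routine.
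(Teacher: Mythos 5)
Your proposal matches the paper's own argument in Section 6 in all essentials: differentiate in $t$ so the boundary relations for the difference become homogeneous and dissipative, use the entropy transport equation \eqref{b9} to rewrite $\partial_x\Phi_2$ as a characteristic-direction derivative and so avoid the derivative loss (the paper's identity \eqref{CC1}), bound the inhomogeneous terms by $\|\Phi-\Phi^{(P)}\|_{C^0}$ times the $W^{2,\infty}$ bound \eqref{b23} from Theorem~\ref{t4}, propagate the geometric decay layer by layer via characteristics and boundary contraction, and finally recover $\partial_x$ from $\partial_t$ through the equations, which produces the $(1+\mathcal{K})$ prefactor. The only cosmetic difference is that the paper runs the contraction one boundary reflection per $T_0$-layer using the weights $F_1,F_3$ rather than your round-trip $|K_1K_3|<1$ accounting, which does not affect the conclusion.
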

\begin{remark}\label{r1}
In the conditions~\eqref{B15} and~\eqref{B16}, we assume that the $C^{0}$ norm of $H_{2}(t)$ or ${\Phi}_{2_{0}}(x)$ can be large, while the homogeneous $C^{1}$ norm is suitably small. This means that the amplitude of the entropy is allowed to be large, while the oscillation of the entropy must be small.
\end{remark}
\begin{remark}\label{r2}
When $\alpha(x)\equiv0$, the equations~\eqref{a1} become the standard nonisentropic Euler equations. At that case the steady subsonic background solution is a constant state $(\underline{\rho},\underline{u},\underline{S})^{\top}$ and we only need $\underline{u}\neq0$ to obtain the similar results.
\end{remark}

\section{Existence of the Time-periodic Solution}\label{s3}
\indent\indent In this section, we give the proof of~\theref{t1} by applying a newly constructed linear iteration method.

Firstly, we multiply $\mu_{i}(\Phi+\tilde{\Phi})=\lambda_{i}^{-1}(\Phi+\tilde{\Phi})(i=1,2,3)$ on both sides of the $i$-th equation of~\eqref{b8}-\eqref{b10} for $i=1,2,3$ and exchange the positions of $t$ and $x$ to get
\begin{align}
&\partial_{x}\Phi_{1}+\mu_{1}(\Phi+\tilde{\Phi})\partial_{t}\Phi_{1}\notag\\
=&\frac{\alpha}{2}(1-\frac{(\gamma+1)(\tilde{\Phi}_{1}+\tilde{\Phi}_{3})}{(\gamma+1)\tilde{\Phi}_{1}+(3-\gamma)\tilde{\Phi}_{3}})\mu_{1}
(\tilde{\Phi})\Phi_{1}+\frac{\alpha}{2}(1-\frac{(3-\gamma)(\tilde{\Phi}_{1}+\tilde{\Phi}_{3})}{(\gamma+1)\tilde{\Phi}_{1}+(3-\gamma)
\tilde{\Phi}_{3}})\mu_{1}(\Phi+\tilde{\Phi})\Phi_{3}\notag\\
&+\frac{\alpha}{2}(1-\frac{(\gamma+1)(\tilde{\Phi}_{1}+\tilde{\Phi}_{3})}
{(\gamma+1)\tilde{\Phi}_{1}+(3-\gamma)\tilde{\Phi}_{3}})\Big(\mu_{1}(\Phi+\tilde{\Phi})-\mu_{1}(\tilde{\Phi})\Big)\Phi_{1}\notag\\
&+\frac{(\gamma-1)\varphi'(\Phi_{2}+\tilde{\Phi}_{2})}{16\gamma\varphi(\Phi_{2}+\tilde{\Phi}_{2})}(\Phi_{3}+\tilde{\Phi}_{3}-\Phi_{1}-\tilde{\Phi}_{1})^{2}
\mu_{1}(\Phi+\tilde{\Phi})\frac{\partial\Phi_{2}}{\partial x},\notag\\
=&\frac{\alpha}{2}(1-\frac{(\gamma+1)(\tilde{\Phi}_{1}+\tilde{\Phi}_{3})}{(\gamma+1)\tilde{\Phi}_{1}+(3-\gamma)\tilde{\Phi}_{3}})\mu_{1}
(\tilde{\Phi})\Phi_{1}+\frac{\alpha}{2}(1-\frac{(\gamma+1)(\tilde{\Phi}_{1}+\tilde{\Phi}_{3})}{(\gamma+1)\tilde{\Phi}_{1}+(3-\gamma)\tilde{\Phi}_{3}})\mu_{1}
(\tilde{\Phi})\Phi_{3}\notag\\
&+\frac{\alpha}{2}(1-\frac{(\gamma+1)(\tilde{\Phi}_{1}+\tilde{\Phi}_{3})}
{(\gamma+1)\tilde{\Phi}_{1}+(3-\gamma)\tilde{\Phi}_{3}})\Big(\mu_{1}(\Phi+\tilde{\Phi})-\mu_{1}(\tilde{\Phi})\Big)(\Phi_{1}+\Phi_{3})\notag\\
&+\frac{\alpha(\gamma-1)(\tilde{\Phi}_{1}+\tilde{\Phi}_{3})}{(\gamma+1)\tilde{\Phi}_{1}+(3-\gamma)\tilde{\Phi}_{3}}
\mu_{1}(\Phi+\tilde{\Phi})\Phi_{3}\notag\\
&+\frac{(\gamma-1)\varphi'(\Phi_{2}+\tilde{\Phi}_{2})}{16\gamma\varphi(\Phi_{2}+\tilde{\Phi}_{2})}(\Phi_{3}+\tilde{\Phi}_{3}-\Phi_{1}-\tilde{\Phi}_{1})^{2}
\mu_{1}(\Phi+\tilde{\Phi})\frac{\partial\Phi_{2}}{\partial x},\label{c1}
\end{align}
\begin{align}
&\partial_{x}\Phi_{2}+\mu_{2}(\Phi+\tilde{\Phi})\partial_{t}\Phi_{2}=0,\qquad\qquad\qquad\qquad\qquad\qquad\qquad\qquad\qquad\qquad
\qquad\quad
\label{c2}
\end{align}
\begin{align}
&\partial_{x}\Phi_{3}+\mu_{3}(\Phi+\tilde{\Phi})\partial_{t}\Phi_{3}\notag\\
=&\frac{\alpha}{2}(1-\frac{(\gamma+1)(\tilde{\Phi}_{1}+\tilde{\Phi}_{3})}{(3-\gamma)\tilde{\Phi}_{1}+(\gamma+1)\tilde{\Phi}_{3}})\mu_{3}
(\tilde{\Phi})\Phi_{3}+\frac{\alpha}{2}(1-\frac{(3-\gamma)(\tilde{\Phi}_{1}+\tilde{\Phi}_{3})}{(3-\gamma)\tilde{\Phi}_{1}+(\gamma+1)\tilde{\Phi}_{3}})\mu_{3}
(\Phi+\tilde{\Phi})\Phi_{1}\notag\\
&+\frac{\alpha}{2}(1-\frac{(\gamma+1)(\tilde{\Phi}_{1}+\tilde{\Phi}_{3})}
{(3-\gamma)\tilde{\Phi}_{1}+(\gamma+1)\tilde{\Phi}_{3}})\Big(\mu_{3}(\Phi+\tilde{\Phi})-\mu_{3}(\tilde{\Phi})\Big)\Phi_{3}\notag\\
&+\frac{(\gamma-1)\varphi'(\Phi_{2}+\tilde{\Phi}_{2})}{16\gamma\varphi(\Phi_{2}+\tilde{\Phi}_{2})}(\Phi_{3}+\tilde{\Phi}_{3}-\Phi_{1}-\tilde{\Phi}_{1})^{2}
\mu_{3}(\Phi+\tilde{\Phi})\frac{\partial\Phi_{2}}{\partial x}\notag\\
=&\frac{\alpha}{2}(1-\frac{(\gamma+1)(\tilde{\Phi}_{1}+\tilde{\Phi}_{3})}{(3-\gamma)\tilde{\Phi}_{1}+(\gamma+1)\tilde{\Phi}_{3}})\mu_{3}
(\tilde{\Phi})\Phi_{3}+\frac{\alpha}{2}(1-\frac{(\gamma+1)(\tilde{\Phi}_{1}+\tilde{\Phi}_{3})}{(3-\gamma)\tilde{\Phi}_{1}+(\gamma+1)\tilde{\Phi}_{3}})\mu_{3}
(\tilde{\Phi})\Phi_{1}\notag\\
&+\frac{\alpha}{2}(1-\frac{(\gamma+1)(\tilde{\Phi}_{1}+\tilde{\Phi}_{3})}
{(3-\gamma)\tilde{\Phi}_{1}+(\gamma+1)\tilde{\Phi}_{3}})\Big(\mu_{3}(\Phi+\tilde{\Phi})-\mu_{3}(\tilde{\Phi})\Big)(\Phi_{1}+\Phi_{3})\notag\\
&+\frac{\alpha(\gamma-1)(\tilde{\Phi}_{1}+\tilde{\Phi}_{3})}{(3-\gamma)\tilde{\Phi}_{1}+(\gamma+1)\tilde{\Phi}_{3}}
\mu_{3}(\Phi+\tilde{\Phi})\Phi_{1}\notag\\
&+\frac{(\gamma-1)\varphi'(\Phi_{2}+\tilde{\Phi}_{2})}{16\gamma\varphi(\Phi_{2}+\tilde{\Phi}_{2})}(\Phi_{3}+\tilde{\Phi}_{3}-\Phi_{1}-\tilde{\Phi}_{1})^{2}
\mu_{3}(\Phi+\tilde{\Phi})\frac{\partial\Phi_{2}}{\partial x}.\label{c3}
\end{align}
In the following proof, we only give the proof when $|K_{1}|<1, |K_{3}|<1$. If $|K_{1}|=1$ or $|K_{3}|=1$ occurs, we only need to make some transformation to turn it into the above form. Moreover, for convenience, we assume $|K_{2}|\leq1$. The proof process is similar when $|K_{2}|>1$.

Then, we establish the following "initial"-value problem of linearized system
\begin{align}
&\partial_{x}\Phi_{1}^{(l)}+\mu_{1}(\Phi^{(l-1)}+\tilde{\Phi})\partial_{t}\Phi_{1}^{(l)}\notag\\
=&\frac{\alpha}{2}(1-\frac{(\gamma+1)(\tilde{\Phi}_{1}+\tilde{\Phi}_{3})}{(\gamma+1)\tilde{\Phi}_{1}+(3-\gamma)\tilde{\Phi}_{3}})\mu_{1}
(\tilde{\Phi})\Phi_{1}^{(l)}+\frac{\alpha}{2}(1-\frac{(\gamma+1)(\tilde{\Phi}_{1}+\tilde{\Phi}_{3})}{(\gamma+1)\tilde{\Phi}_{1}
+(3-\gamma)\tilde{\Phi}_{3}})\mu_{1}(\tilde{\Phi})\Phi_{3}^{(l-1)}\notag\\
&+\frac{\alpha}{2}(1-\frac{(\gamma+1)(\tilde{\Phi}_{1}+\tilde{\Phi}_{3})}
{(\gamma+1)\tilde{\Phi}_{1}+(3-\gamma)\tilde{\Phi}_{3}})\Big(\mu_{1}(\Phi^{(l-1)}+\tilde{\Phi})-\mu_{1}(\tilde{\Phi})\Big)(\Phi_{1}^{(l-1)}
+\Phi_{3}^{(l-1)})\notag\\
&+\frac{\alpha(\gamma-1)(\tilde{\Phi}_{1}+\tilde{\Phi}_{3})}{(\gamma+1)\tilde{\Phi}_{1}+(3-\gamma)\tilde{\Phi}_{3}}
\mu_{1}(\Phi^{(l-1)}+\tilde{\Phi})\Phi_{3}^{(l-1)}\notag\\
&+\frac{(\gamma-1)\varphi'(\Phi^{(l-1)}_{2}+\tilde{\Phi}_{2})}{16\gamma\varphi(\Phi^{(l-1)}_{2}+\tilde{\Phi}_{2})}(\Phi_{3}^{(l-1)}+\tilde{\Phi}_{3}
-\Phi_{1}^{(l-1)}-\tilde{\Phi}_{1})^{2}\mu_{1}(\Phi^{(l-1)}+\tilde{\Phi})\frac{\partial\Phi_{2}^{(l-1)}}{\partial x},\label{c4}\\
&x=L:~~ \Phi_{1}^{(l)}(t,L)=\mathcal{H}_{1}(t)+K_{1}\Phi_{3}^{(l-1)}(t,L),\label{c5}\\
\notag\\
&\partial_{x}\Phi_{2}^{(l)}+\mu_{2}(\Phi^{(l-1)}+\tilde{\Phi})\partial_{t}\Phi_{2}^{(l)}=0,\label{c6}\\
&x=0:~~ \Phi_{2}^{(l)}(t,0)=\mathcal{H}_{2}(t)+K_{2}\Phi_{1}^{(l)}(t,0),\label{c7}
\end{align}
\begin{align}
&\partial_{x}\Phi_{3}^{(l)}+\mu_{3}(\Phi^{(l-1)}+\tilde{\Phi})\partial_{t}\Phi_{3}^{(l)}\notag\\
=&\frac{\alpha}{2}(1-\frac{(\gamma+1)(\tilde{\Phi}_{1}+\tilde{\Phi}_{3})}{(3-\gamma)\tilde{\Phi}_{1}+(\gamma+1)\tilde{\Phi}_{3}})\mu_{3}
(\tilde{\Phi})\Phi_{3}^{(l)}+\frac{\alpha}{2}(1-\frac{(\gamma+1)(\tilde{\Phi}_{1}+\tilde{\Phi}_{3})}{(3-\gamma)\tilde{\Phi}_{1}+(\gamma+1)\tilde{\Phi}_{3}})\mu_{3}
(\tilde{\Phi})\Phi_{1}^{(l-1)}\notag\\
&+\frac{\alpha}{2}(1-\frac{(\gamma+1)(\tilde{\Phi}_{1}+\tilde{\Phi}_{3})}
{(3-\gamma)\tilde{\Phi}_{1}+(\gamma+1)\tilde{\Phi}_{3}})\Big(\mu_{3}(\Phi^{(l-1)}+\tilde{\Phi})-\mu_{3}(\tilde{\Phi})\Big)(\Phi_{1}^{(l-1)}
+\Phi_{3}^{(l-1)})\notag\\
&+\frac{\alpha(\gamma-1)(\tilde{\Phi}_{1}+\tilde{\Phi}_{3})}{(3-\gamma)\tilde{\Phi}_{1}+(\gamma+1)\tilde{\Phi}_{3}}
\mu_{3}(\Phi^{(l-1)}+\tilde{\Phi})\Phi_{1}^{(l-1)}\notag\\
&+\frac{(\gamma-1)\varphi'(\Phi^{(l-1)}_{2}+\tilde{\Phi}_{2})}{16\gamma\varphi(\Phi^{(l-1)}_{2}+\tilde{\Phi}_{2})}(\Phi_{3}^{(l-1)}+\tilde{\Phi}_{3}
-\Phi_{1}^{(l-1)}-\tilde{\Phi}_{1})^{2}\mu_{3}(\Phi^{(l-1)}+\tilde{\Phi})\frac{\partial\Phi_{2}^{(l-1)}}{\partial x}.\label{c8}\\
&x=0:~~ \Phi_{3}^{(l)}(t,0)=\mathcal{H}_{3}(t)+K_{3}\Phi_{1}^{(l-1)}(t,0),\label{c9}
\end{align}
where
\begin{align*}
\mathcal{H}_{i}(t)=\left\{
\begin{aligned}
&H_{i}(t),\quad t\geq0,\\
&\tilde{H}_{i}(t),\quad t<0,
\end{aligned}\right.
\end{align*}
are the time-periodic extensions of $H_{i}(t)(i=1,2,3)$.

Next, we start the iteration~\eqref{c4}-\eqref{c9} from
\begin{align}
\Phi_{1}^{(0)}(t,x)=0,~\Phi_{2}^{(0)}(t,x)=M_{0},~\Phi_{3}^{(0)}(t,x)=0. \label{c10}
\end{align}

\begin{proposition}\label{p1}
There exists a small enough constant $\epsilon_{1}>0$, positive constants $M_{1}, M_{2}>0$ and $\eta\in(0,1)$, such that for any given $\epsilon\in(0,\epsilon_{1})$ and $l\in \mathbb{N}_{+}$, the following estimates hold
\begin{align}
&\Phi^{(l)}(t+P,x)=\Phi^{(l)}(t,x),\quad\forall(t,x)\in \mathbb{R}\times[0,L],\label{c11}
\end{align}
\begin{align}
&\|\Phi_{1}^{(l)}\|_{C^{0}(D)}\leq M_{1}\epsilon,\quad \|\partial_{t}\Phi_{1}^{(l)}\|_{C^{0}(D)}\leq M_{1}\epsilon,\quad \|\partial_{x}\Phi_{1}^{(l)}\|_{C^{0}(D)}\leq M_{2}\epsilon, \label{c12}\\
&\|\Phi_{2}^{(l)}\|_{C^{0}(D)}\leq 2M_{0},\quad \|\partial_{t}\Phi_{2}^{(l)}\|_{C^{0}(D)}\leq M_{1}\epsilon,\quad \|\partial_{x}\Phi_{2}^{(l)}\|_{C^{0}(D)}\leq M_{2}\epsilon,\label{c13}\\
&\|\Phi_{3}^{(l)}\|_{C^{0}(D)}\leq M_{1}\epsilon,\quad \|\partial_{t}\Phi_{3}^{(l)}\|_{C^{0}(D)}\leq M_{1}\epsilon,\quad \|\partial_{x}\Phi_{3}^{(l)}\|_{C^{0}(D)}\leq M_{2}\epsilon,\label{c14}
\end{align}
\begin{align}
\|\Phi^{(l)}-\Phi^{(l-1)}\|_{C^{0}(D)}\leq M_{1}\epsilon\eta^{l}, \label{c15}
\end{align}
\begin{align}
\mathop{\max}\limits_{i=1,2,3}\{\varpi(\delta|\partial_{t}\Phi_{i}^{(l)})+\varpi(\delta|\partial_{x}\Phi_{i}^{(l)})\}
\leq(\frac{1}{3}+\frac{1}{2}\mathcal{K})\Omega(\delta), \label{c16}
\end{align}
where
\begin{align}
\varpi(\delta|h)=\mathop{\sup}\limits_{\substack{|t_{1}-t_{2}|\leq\delta\\|x_{1}-x_{2}|\leq\delta}}|h(t_{1},x_{1})-h(t_{2},x_{2})|\notag
\end{align}
and $\Omega(\delta)$ is a continuous function of $\delta\in(0,1)$ which is independent of $l$ and satisfies
$$
\mathop{\lim}\limits_{\delta\rightarrow0^{+}}\Omega(\delta)=0.
$$
\end{proposition}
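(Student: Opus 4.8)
The plan is to argue by induction on the iteration index $l$, establishing \eqref{c11}--\eqref{c16} simultaneously for $\Phi^{(l)}$ under the hypothesis that they already hold for $\Phi^{(l-1)}$; the base case is checked directly from the explicit choice \eqref{c10}. The decisive structural observation is that the linearized system \eqref{c4}--\eqref{c9} decouples in a definite order. The coefficients and inhomogeneous terms of the $\Phi_1^{(l)}$- and $\Phi_3^{(l)}$-equations depend only on the previous iterate $\Phi^{(l-1)}$, and their boundary data \eqref{c5}, \eqref{c9} involve only $\Phi_3^{(l-1)}$, $\Phi_1^{(l-1)}$, so these two scalar transport equations can be solved first by integration along the $1$- and $3$-characteristics issuing from $x=L$ and $x=0$, respectively. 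Afterwards $\Phi_2^{(l)}$ is recovered from \eqref{c6}--\eqref{c7} along the $2$-characteristics starting at $x=0$, using the already-known trace $\Phi_1^{(l)}(t,0)$. Since the background $\tilde\Phi$ is time-independent (indeed $\tilde\Phi_2$ is constant by \lemref{L1}) and the extended data $\mathcal{H}_i$ are $P$-periodic, translation invariance of the characteristic representation together with uniqueness yields the periodicity \eqref{c11} at once.

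For the uniform bounds \eqref{c12}--\eqref{c14} I would write each component through its Duhamel representation along characteristics and estimate in $C^0$. The entropy bound $\|\Phi_2^{(l)}\|_{C^0}\le 2M_0$ is immediate from constancy of $\Phi_2^{(l)}$ along the $2$-characteristics and the boundary value $\mathcal{H}_2+K_2\Phi_1^{(l)}(t,0)$ via \eqref{B15}; crucially, to control $\partial_x\Phi_2^{(l)}$ I would not differentiate in $x$ but use \eqref{c6} in the form $\partial_x\Phi_2^{(l)}=-\mu_2(\Phi^{(l-1)}+\tilde\Phi)\,\partial_t\Phi_2^{(l)}$, trading the $x$-derivative (the source of the derivative loss in \eqref{c4}, \eqref{c8}) for a $t$-derivative that is transported along the $2$-characteristic and is of size $O(\epsilon)$ by \eqref{B15}. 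For $\Phi_1^{(l)}$ and $\Phi_3^{(l)}$ the key is the sign and size of the self-interaction coefficients: a direct computation gives that the coefficient of $\Phi_1^{(l)}$ in \eqref{c4} equals $\alpha(1-\gamma)\tilde\Phi_3/(4\tilde\lambda_1^2)\ge0$ (call it $a_1$) and that of $\Phi_3^{(l)}$ in \eqref{c8} equals $\alpha(1-\gamma)\tilde\Phi_1/(4\tilde\lambda_3^2)\le0$, so that integrating from $x=L$ (respectively $x=0$) produces a contractive exponential factor, the stronger the smaller $c_-$ is. Moreover, the reorganization leading to the second equality in \eqref{c4} makes the cross coefficient of $\Phi_3^{(l-1)}$ \emph{identical} to $a_1$, whence its Duhamel contribution integrates to an expression of the form $\int_L^x e^{\int_s^x a_1}\,a_1\,ds=e^{\int_L^x a_1}-1$ that is bounded by $\|\Phi_3^{(l-1)}\|_{C^0}$ regardless of the possibly large size $O(1/c_-)$ of these coefficients; the remaining cross terms carry an extra factor $\tilde\Phi_1+\tilde\Phi_3=2\tilde u$ and are handled the same way. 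Finally, the entropy-gradient term carries $(\Phi_3+\tilde\Phi_3-\Phi_1-\tilde\Phi_1)^2=\bigl(\tfrac{4}{\gamma-1}c\bigr)^2=O(c_-^2)$, which against $\mu_1=O(1/c_-)$ is of size $O(c_-)$ and hence negligible under \eqref{a4}. Combining these with the boundary coupling and $|K_1K_3|<1$ closes \eqref{c12} and \eqref{c14}, and the derivative bounds follow by differentiating the representations in $t$ and reusing the entropy trick for the $x$-derivatives.

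The contraction estimate \eqref{c15} is obtained by applying the same characteristic estimates to the differences $\Phi^{(l)}-\Phi^{(l-1)}$, whose equations are driven by (i) the differences $\Phi^{(l-1)}-\Phi^{(l-2)}$ in the inhomogeneous terms and boundary data and (ii) the coefficient differences $\mu_i(\Phi^{(l-1)}+\tilde\Phi)-\mu_i(\Phi^{(l-2)}+\tilde\Phi)$, which are Lipschitz in $\Phi^{(l-1)}-\Phi^{(l-2)}$. The interior contributions carry an $O(\epsilon)$ or $O(c_-)$ prefactor, while the boundary round trip contributes the genuine contraction factor from $|K_1K_3|<1$; choosing $\epsilon_1$ and $\epsilon_0$ small makes the total Lipschitz constant a fixed $\eta\in(0,1)$, giving \eqref{c15} inductively. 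For the equicontinuity \eqref{c16} I would propagate a common modulus $\Omega(\delta)$ through the representation: the first derivatives $\partial_t\Phi_i^{(l)},\partial_x\Phi_i^{(l)}$ satisfy transport equations whose data (boundary traces and sources built from $\Phi^{(l-1)}$) inherit a modulus controlled by that of the fixed functions $\mathcal{H}_i,\tilde\Phi$ and, inductively, of $\Phi^{(l-1)}$; the factor $\tfrac13+\tfrac12\mathcal{K}$ is arranged, using \eqref{B13}, so that this bound is reproduced rather than amplified, which is what later permits an Arzel\`a--Ascoli passage to the limit.

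The main obstacle I anticipate is closing the first-derivative estimates for $\Phi_1^{(l)}$ and $\Phi_3^{(l)}$ uniformly in $l$. After differentiating \eqref{c4} and \eqref{c8} in $t$, one obtains transport equations for $\partial_t\Phi_1^{(l)},\partial_t\Phi_3^{(l)}$ whose sources again contain the large $O(1/c_-)$ self- and cross-coefficients together with the differentiated entropy-gradient term, and one must verify that the exponential self-damping, the boundary dissipation $|K_1K_3|<1$, and the smallness \eqref{a4} of $c_-$ combine to exactly cancel the accelerating effect, so that $M_1,M_2,\eta$ can be fixed independently of $l$ and of the large entropy amplitude $M_0$. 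Making this balance quantitative---tracking how the $1/c_-$ growth is compensated by the damping accumulated along each characteristic traversal and by the reflection coefficients at the two ends---is the technical heart of the argument.
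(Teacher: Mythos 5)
Your overall strategy coincides with the paper's: induction in $l$, integration along characteristics with an exponential weight that absorbs the self-interaction coefficient, exact matching of the cross-coefficient of $\Phi_3^{(l-1)}$ with that of $\Phi_1^{(l)}$ so that its Duhamel contribution is controlled by $e^{\int a_1}-1$, smallness of the leftover terms through $\tilde\Phi_1+\tilde\Phi_3=2\tilde u=O(c_-)$ and \eqref{a4}, and the boundary dissipation $|K_1K_3|<1$ closing the loop. The sign computations for the self-coefficients are correct. However, there is a genuine gap at the step you yourself flag as the "technical heart": the first-derivative estimates for $\Phi_1^{(l)},\Phi_3^{(l)}$. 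The obstruction there is not primarily the size of the coefficients but a \emph{derivative loss}, and your proposed "entropy trick" does not remove it. Substituting $\partial_x\Phi_2^{(l-1)}=-\mu_2(\Phi^{(l-1)}+\tilde\Phi)\partial_t\Phi_2^{(l-1)}$ into the source of \eqref{c4} and then differentiating in $t$ produces $\partial_t^2\Phi_2^{(l-1)}$, a second derivative of the previous iterate that is controlled neither by the induction hypotheses (which only carry $C^1$ bounds and moduli of continuity of first derivatives) nor by the $C^1$ data of Theorem~\ref{t1}; propagating second-derivative bounds instead would require the $W^{2,\infty}$ hypotheses of Theorem~\ref{t4}, which are not available here.

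The paper's resolution is different and essential: using \eqref{b9} and the identity $\lambda_1-\lambda_2=-\tfrac{\gamma-1}{4}(r_3-r_1)$, the entropy source in \eqref{c4} is rewritten (see \eqref{CC1}) as a multiple of $\partial_x\Phi_2^{(l-1)}+\mu_1(\Phi^{(l-1)}+\tilde\Phi)\,\partial_t\Phi_2^{(l-1)}$, i.e.\ the directional derivative of $\Phi_2^{(l-1)}$ along the \emph{first} characteristic of the $l$-th linearized system — not along the second characteristic, as in your substitution. After differentiating in $t$, the resulting term $\partial_x\Psi_2^{(l-1)}+\mu_1(\Phi^{(l-1)}+\tilde\Phi)\partial_t\Psi_2^{(l-1)}$ in \eqref{c57} is an exact derivative along the very curve over which \eqref{C64} is integrated, so it can be integrated by parts, leaving only boundary terms of size $O(\epsilon_0\epsilon)$ and interior terms involving first derivatives of $\Phi^{(l-1)}$ (whence the need for the modulus-of-continuity hypotheses \eqref{c34}). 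Without this device the induction cannot be closed. Two further points the proposal glosses over, though they are less serious: the $t$-differentiated equation contains $\Psi_1^{(l)}$ multiplied by an $O(\epsilon)$ coefficient of indefinite sign, which the paper handles by a preliminary rough Gronwall bound \eqref{c63} followed by a second, weighted integration to recover the sharp constant; and the propagation of the common modulus $\Omega(\delta)$ requires comparing integrals along two nearby characteristics and a chain of $[\mathcal{K}+1]$ intermediate points, which is where the specific constants in \eqref{c16} come from.
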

\begin{proof}
We prove the a priori estimates~\eqref{c11}-\eqref{c16} inductively, i.e., for each $l\in \mathbb{N}_{+}$, we show
\begin{align}
&\Phi_{i}^{(l)}(t+P,x)=\Phi_{i}^{(l)}(t,x), \quad\forall(t,x)\in \mathbb{R}\times[0,L], \forall i=1,2,3,\label{c17}\\
&\|\Phi_{1}^{(l)}\|_{C^{0}}\leq M_{1}\epsilon,~~\|\partial_{t}\Phi_{1}^{(l)}\|_{C^{0}}\leq M_{1}\epsilon,~~
\|\partial_{x}\Phi_{1}^{(l)}\|_{C^{0}}\leq M_{2}\epsilon,\label{c18}\\
&\|\Phi_{2}^{(l)}\|_{C^{0}}\leq 2M_{0},~~\|\partial_{t}\Phi_{2}^{(l)}\|_{C^{0}}\leq M_{1}\epsilon,~~\|\partial_{x}\Phi_{2}^{(l)}\|_{C^{0}}\leq M_{2}\epsilon,\label{c19}\\
&\|\Phi_{3}^{(l)}\|_{C^{0}}\leq M_{1}\epsilon,~~\|\partial_{t}\Phi_{3}^{(l)}\|_{C^{0}}\leq M_{1}\epsilon,~~
\|\partial_{x}\Phi_{3}^{(l)}\|_{C^{0}}\leq M_{2}\epsilon,\label{c20}\\
&\|\Phi_{i}^{(l)}-\Phi_{i}^{(l-1)}\|_{C^{0}}\leq M_{1}\epsilon\eta^{l},\quad\quad\quad\quad\quad\quad\quad\quad~~ i=1,2,3,\label{c21}\\
&\varpi(\delta|\partial_{t}\Phi_{i}^{(l)}(\cdot,x))\leq\frac{1}{8[\mathcal{K}+1]}\Omega(\delta),\quad i=1,2,3,~\forall x\in[0,L]
\label{c23}
\end{align}
and
\begin{align}
&\max\{\varpi(\delta|\partial_{t}\Phi_{i}^{(l)})+\varpi(\delta|\partial_{x}\Phi_{i}^{(l)})\}
\leq(\frac{1}{3}+\frac{1}{2}\mathcal{K})\Omega(\delta), \quad i=1,2,3\label{c25}
\end{align}
under the following hypotheses
\begin{align}
&\Phi_{i}^{(l-1)}(t+P,x)=\Phi_{i}^{(l-1)}(t,x), \quad\forall(t,x)\in \mathbb{R}\times[0,L], \forall i=1,2,3,\label{c28}\\
&\|\Phi_{1}^{(l-1)}\|_{C^{0}}\leq M_{1}\epsilon,~~\|\partial_{t}\Phi_{1}^{(l-1)}\|_{C^{0}}\leq M_{1}\epsilon,~~
\|\partial_{x}\Phi_{1}^{(l-1)}\|_{C^{0}}\leq M_{2}\epsilon,\label{c29}\\
&\|\Phi_{2}^{(l-1)}\|_{C^{0}}\leq 2M_{0},~~\|\partial_{t}\Phi_{2}^{(l-1)}\|_{C^{0}}\leq M_{1}\epsilon,~~\|\partial_{x}\Phi_{2}^{(l-1)}\|_{C^{0}}\leq M_{2}\epsilon,\label{c30}\\
&\|\Phi_{3}^{(l-1)}\|_{C^{0}}\leq M_{1}\epsilon,~~\|\partial_{t}\Phi_{3}^{(l-1)}\|_{C^{0}}\leq M_{1}\epsilon,~~
\|\partial_{x}\Phi_{3}^{(l-1)}\|_{C^{0}}\leq M_{2}\epsilon,\label{c31}\\
&\|\Phi_{i}^{(l-1)}-\Phi_{i}^{(l-2)}\|_{C^{0}}\leq M_{1}\epsilon\eta^{l-1},\quad\quad\quad\quad i=1,2,3,~\forall l\geq2,\label{c32}\\
&\varpi(\delta|\partial_{t}\Phi_{i}^{(l-1)}(\cdot,x))\leq\frac{1}{8[\mathcal{K}+1]}\Omega(\delta), \quad i=1,2,3,~\forall x\in[0,L]\label{c34}
\end{align}
and
\begin{align}
&\max\{\varpi(\delta|\partial_{t}\Phi_{i}^{(l-1)})+\varpi(\delta|\partial_{x}\Phi_{i}^{(l-1)})\}
\leq(\frac{1}{3}+\frac{1}{2}\mathcal{K})\Omega(\delta),\label{c36}
\end{align}
where $[\mathcal{K}+1]$ represents the integer part of $\mathcal{K}+1$ and
$$\varpi(\delta|h(\cdot,x))=\mathop{\max}\limits_{|t_{1}-t_{2}|\leq\delta}|h(t_{1},x)-h(t_{2},x)|.$$

To see this, we denote
$$ F_{1}(x)=e^{\int_{x}^{L}\frac{\alpha(s)}{2}(1-\frac{(\gamma+1)(\tilde{\Phi}_{1}+\tilde{\Phi}_{3})}{(\gamma+1)\tilde{\Phi}_{1}
+(3-\gamma)\tilde{\Phi}_{3}})\mu_{1}(\tilde{\Phi}(s))ds},
~~F_{3}(x)=e^{-\int_{0}^{x}\frac{\alpha(s)}{2}(1-\frac{(\gamma+1)(\tilde{\Phi}_{1}+\tilde{\Phi}_{3})}{(3-\gamma)\tilde{\Phi}_{1}
+(\gamma+1)\tilde{\Phi}_{3}})\mu_{3}(\tilde{\Phi}(s))ds}.$$
By $\lambda_{1}(\tilde{\Phi})=\frac{(\gamma+1)\tilde{\Phi}_{1}+(3-\gamma)\tilde{\Phi}_{3}}{4}<0$ and $\tilde{u}>0$, we get
\begin{align*}
\frac{(\gamma+1)(\tilde{\Phi}_{1}+\tilde{\Phi}_{3})}{(\gamma+1)\tilde{\Phi}_{1}+(3-\gamma)\tilde{\Phi}_{3}}
=\frac{\gamma+1}{4\lambda_{1}(\tilde{\Phi})}(\tilde{\Phi}_{1}+\tilde{\Phi}_{3})
=\frac{\gamma+1}{2\lambda_{1}(\tilde{\Phi})}\tilde{u}<0,
\end{align*}
then
\begin{align}
F_{1}(x)\geq1,\label{c39}
\end{align}
\begin{align}
\frac{d}{dx}F_{1}(x)=-\frac{\alpha(x)}{2}(1-\frac{(\gamma+1)(\tilde{\Phi}_{1}+\tilde{\Phi}_{3})}{(\gamma+1)\tilde{\Phi}_{1}
+(3-\gamma)\tilde{\Phi}_{3}})\mu_{1}(\tilde{\Phi})F_{1}(x)\leq0.\label{c40}
\end{align}
Using $\lambda_{3}(\tilde{\Phi})=\frac{(3-\gamma)\tilde{\Phi}_{1}+(\gamma+1)\tilde{\Phi}_{3}}{4}>0, 1<\gamma<3$ and $\tilde{u}<\tilde{c}$, we have
\begin{align*}
\frac{(\gamma+1)(\tilde{\Phi}_{1}+\tilde{\Phi}_{3})}{(3-\gamma)\tilde{\Phi}_{1}+(\gamma+1)\tilde{\Phi}_{3}}
=\frac{\gamma+1}{4\lambda_{3}(\tilde{\Phi})}(\tilde{\Phi}_{1}+\tilde{\Phi}_{3})
=\frac{(\gamma+1)\tilde{u}}{2(\tilde{u}+\tilde{c})}
<\frac{(\gamma+1)\tilde{u}}{4\tilde{u}}<1,
\end{align*}
then
\begin{align}
&F_{3}(x)\geq1,\label{CC39}\\
&\frac{d}{dx}F_{3}(x)=-\frac{\alpha(x)}{2}(1-\frac{(\gamma+1)(\tilde{\Phi}_{1}+\tilde{\Phi}_{3})}{(3-\gamma)\tilde{\Phi}_{1}
+(\gamma+1)\tilde{\Phi}_{3}})\mu_{3}(\tilde{\Phi})F_{3}(x)\geq0.\label{c41}
\end{align}
Moreover, one has
\begin{align}
&1\leq F_{1}(x)\leq e^{-\frac{\alpha_{*}}{2}\mathcal{K}L(1+\frac{\gamma+1}{2}\mathcal{K}c_{-})}\mathop{=}\limits^{def.}\mathcal{M},\label{c42}\\
&1\leq F_{3}(x)\leq \mathcal{M}.\label{c43}
\end{align}

Now, we turn problem~\eqref{c4}-\eqref{c5} and~\eqref{c8}-\eqref{c9} into the system of $F_{1}(x)\Phi_{1}^{(l)}$ and $F_{3}(x)\Phi_{3}^{(l)}$ as follows
\begin{align}
&\partial_{x}(F_{1}\Phi_{1}^{(l)})+\mu_{1}(\Phi^{(l-1)}+\tilde{\Phi})\partial_{t}(F_{1}\Phi_{1}^{(l)})\notag\\
=&\frac{\alpha}{2}(1-\frac{(\gamma+1)(\tilde{\Phi}_{1}+\tilde{\Phi}_{3})}
{(\gamma+1)\tilde{\Phi}_{1}+(3-\gamma)\tilde{\Phi}_{3}})F_{1}\Big(\mu_{1}(\Phi^{(l-1)}+\tilde{\Phi})-\mu_{1}(\tilde{\Phi})\Big)
(\Phi_{1}^{(l-1)}+\Phi_{3}^{(l-1)})\notag\\
&+\frac{(\gamma-1)\varphi'(\Phi_{2}^{(l-1)}+\tilde{\Phi}_{2})}{16\gamma\varphi(\Phi_{2}^{(l-1)}+\tilde{\Phi}_{2})}
(\Phi_{3}^{(l-1)}+\tilde{\Phi}_{3}
-\Phi_{1}^{(l-1)}-\tilde{\Phi}_{1})^{2}F_{1}\mu_{1}(\Phi^{(l-1)}+\tilde{\Phi})\frac{\partial\Phi_{2}^{(l-1)}}{\partial x}\notag\\
&+\frac{\alpha}{2}(1-\frac{(\gamma+1)(\tilde{\Phi}_{1}+\tilde{\Phi}_{3})}
{(\gamma+1)\tilde{\Phi}_{1}+(3-\gamma)\tilde{\Phi}_{3}})F_{1}\mu_{1}(\tilde{\Phi})\Phi_{3}^{(l-1)}\notag\\
&+\frac{\alpha(\gamma-1)(\tilde{\Phi}_{1}+\tilde{\Phi}_{3})}
{(\gamma+1)\tilde{\Phi}_{1}+(3-\gamma)\tilde{\Phi}_{3}}F_{1}\mu_{1}(\Phi^{(l-1)}+\tilde{\Phi})\Phi_{3}^{(l-1)},\label{c44}\\
\notag\\
&x=L:~~~~F_{1}(L)\Phi_{1}^{(l)}(t,L)=\Phi_{1}^{(l)}(t,L)=\mathcal{H}_{1}(t)+K_{1}\Phi_{3}^{(l-1)}(t,L),\label{c45}
\end{align}
\begin{align}
&\partial_{x}(F_{3}\Phi_{3}^{(l)})+\mu_{3}(\Phi^{(l-1)}+\tilde{\Phi})\partial_{t}(F_{3}\Phi_{3}^{(l)})\notag\\
=&\frac{\alpha}{2}(1-\frac{(\gamma+1)(\tilde{\Phi}_{1}+\tilde{\Phi}_{3})}
{(3-\gamma)\tilde{\Phi}_{1}+(\gamma+1)\tilde{\Phi}_{3}})F_{3}\Big(\mu_{3}(\Phi^{(l-1)}+\tilde{\Phi})-\mu_{3}(\tilde{\Phi})\Big)
(\Phi_{1}^{(l-1)}+\Phi_{3}^{(l-1)})\notag\\
&+\frac{(\gamma-1)\varphi'(\Phi_{2}^{(l-1)}+\tilde{\Phi}_{2})}{16\gamma\varphi(\Phi_{2}^{(l-1)}+\tilde{\Phi}_{2})}(\Phi_{3}^{(l-1)}+\tilde{\Phi}_{3}
-\Phi_{1}^{(l-1)}-\tilde{\Phi}_{1})^{2}F_{3}\mu_{3}(\Phi^{(l-1)}+\tilde{\Phi})\frac{\partial\Phi_{2}^{(l-1)}}{\partial x}\notag\\
&+\frac{\alpha}{2}(1-\frac{(\gamma+1)(\tilde{\Phi}_{1}+\tilde{\Phi}_{3})}
{(3-\gamma)\tilde{\Phi}_{1}+(\gamma+1)\tilde{\Phi}_{3}})F_{3}\mu_{3}(\tilde{\Phi})\Phi_{1}^{(l-1)}\notag\\
&+\frac{\alpha(\gamma-1)(\tilde{\Phi}_{1}+\tilde{\Phi}_{3})}
{(3-\gamma)\tilde{\Phi}_{1}+(\gamma+1)\tilde{\Phi}_{3}}F_{3}\mu_{3}(\Phi^{(l-1)}+\tilde{\Phi})\Phi_{1}^{(l-1)},\label{c46}\\
\notag\\
&x=0:~~~~F_{3}(0)\Phi_{3}^{(l)}(t,0)=\Phi_{3}^{(l)}(t,0)=\mathcal{H}_{3}(t)+K_{3}\Phi_{1}^{(l-1)}(t,0).\label{c47}
\end{align}
By~\eqref{b14} and~\eqref{c28}, we get that if $F_{1}(x)\Phi_{1}^{(l)}(t,x)$, $\Phi_{2}^{(l)}(t,x)$ and $F_{3}(x)\Phi_{3}^{(l)}(t,x)$ solve problem~\eqref{c44}-\eqref{c45}, \eqref{c6}-\eqref{c7} and~\eqref{c46}-\eqref{c47} respectively, so does $F_{1}(x)\Phi_{1}^{(l)}(t+P,x)$, $\Phi_{2}^{(l)}(t+P,x)$ and $F_{3}(x)\Phi_{3}^{(l)}(t+P,x)$.  Then we get $F_{1}(x)\Phi_{1}^{(l)}(t+P,x)=F_{1}(x)\Phi_{1}^{(l)}(t,x)$, $\Phi_{2}^{(l)}(t+P,x)=\Phi_{2}^{(l)}(t,x)$ and $F_{3}(x)\Phi_{3}^{(l)}(t+P,x)=F_{3}(x)\Phi_{3}^{(l)}(t,x)$ by the uniqueness of this linear system. Thus~\eqref{c17} is proved.

Next, we start to show the estimates~\eqref{c18}-\eqref{c25}. Since the proof of all estimates of $\Phi_{3}^{(l)}$ is similar to that of $\Phi_{1}^{(l)}$, we only prove the estimates of $\Phi_{1}^{(l)}$ and $\Phi_{2}^{(l)}$ here.

Define the characteristic curve $t=t_{i}^{(l)}(x;t_{0},x_{0})(i=1,2,3)$ as the following form:
\begin{align}
\left\{
\begin{aligned}
&\frac{dt_{i}^{(l)}}{dx}(x;t_{0},x_{0})=\mu_{i}(\Phi^{(l-1)}(t_{i}^{(l)}(x;t_{0},x_{0}),x)+\tilde{\Phi}),\\
&t_{i}^{(l)}(x_{0};t_{0},x_{0})=t_{0}.
\end{aligned}\right.\label{c48}
\end{align}
Let $M_{1}=\frac{100\mathcal{M}}{1-K}$ with $K=\max\{|K_{1}|,|K_{3}|\}$, then
\begin{align}
M_{1}\geq|K_{1}|M_{1}+100\mathcal{M},~~M_{1}\geq|K_{3}|M_{1}+100\mathcal{M}.\label{c49}
\end{align}
Then at the boundary $x=L$, by~\eqref{b15} and~\eqref{c31}, we have
\begin{align}
\|\Phi_{1}^{(l)}(\cdot,L)\|_{C^{0}(R)}\leq\epsilon+|K_{1}|M_{1}\epsilon\leq M_{1}\epsilon-99\mathcal{M}\epsilon.\label{c50}
\end{align}
Integrating~\eqref{c44} along the $1$-st characteristic curve $t=t_{1}^{(l)}(y;t,x)$ from $L$ to $x$, we get
\begin{align}
&F_{1}(x)\Phi_{1}^{(l)}(t,x)-F_{1}(L)\Phi_{1}^{(l)}(t_{1}^{(l)}(L;t,x),L)\notag\\
=&\int_{L}^{x}\Big(\frac{\alpha}{2}(1-\frac{(\gamma+1)(\tilde{\Phi}_{1}+\tilde{\Phi}_{3})}
{(\gamma+1)\tilde{\Phi}_{1}+(3-\gamma)\tilde{\Phi}_{3}})F_{1}\Big(\mu_{1}(\Phi^{(l-1)}+\tilde{\Phi})-\mu_{1}(\tilde{\Phi})\Big)
(\Phi_{1}^{(l-1)}+\Phi_{3}^{(l-1)})\notag\\
&+\frac{(\gamma-1)\varphi'(\Phi_{2}^{(l-1)}+\tilde{\Phi}_{2})}{16\gamma\varphi(\Phi_{2}^{(l-1)}+\tilde{\Phi}_{2})}(\Phi_{3}^{(l-1)}
+\tilde{\Phi}_{3}
-\Phi_{1}^{(l-1)}-\tilde{\Phi}_{1})^{2}F_{1}\mu_{1}(\Phi^{(l-1)}+\tilde{\Phi})\frac{\partial\Phi_{2}^{(l-1)}}{\partial x}\notag\\
&+\frac{\alpha(\gamma-1)(\tilde{\Phi}_{1}+\tilde{\Phi}_{3})}
{(\gamma+1)\tilde{\Phi}_{1}+(3-\gamma)\tilde{\Phi}_{3}}F_{1}\mu_{1}(\Phi^{(l-1)}+\tilde{\Phi})\Phi_{3}^{(l-1)}\Big)
\big|_{\tau=t_{1}^{(l)}(y;t,x)}dy\notag\\
&+\int_{L}^{x}(-\frac{d}{dy}F_{1}(y))\Phi_{3}^{(l-1)}(t_{1}^{(l)}(y;t,x),y)dy,\label{c51}
\end{align}
where $F_{1}(L)=1$.

It is worthy to notice that by~\eqref{a2}, \eqref{A3} and~\eqref{B13}-\eqref{a4}, we have
\begin{align}
&|\frac{\alpha(\gamma-1)(\tilde{\Phi}_{1}+\tilde{\Phi}_{3})}
{(\gamma+1)\tilde{\Phi}_{1}+(3-\gamma)\tilde{\Phi}_{3}}\mu_{1}(\Phi^{(l-1)}+\tilde{\Phi})|\notag\\
=&|\frac{\alpha}{2}(\gamma-1)\mu_{1}(\tilde{\Phi})\mu_{1}(\Phi^{(l-1)}+\tilde{\Phi})\tilde{u}|\notag\\
\leq&-\alpha_{*}(\gamma-1)\mathcal{K}^{2}c_{-}<C\epsilon_{0} \label{c52}
\end{align}
and using~\eqref{B13}-\eqref{a4}, \eqref{c29} and~\eqref{c31}, on has
\begin{align}
&\big|\frac{(\gamma-1)\varphi'(\Phi_{2}^{(l-1)}+\tilde{\Phi}_{2})}{16\gamma\varphi(\Phi_{2}^{(l-1)}+\tilde{\Phi}_{2})}(\Phi_{3}^{(l-1)}
+\tilde{\Phi}_{3}
-\Phi_{1}^{(l-1)}-\tilde{\Phi}_{1})^{2}\mu_{1}(\Phi^{(l-1)}+\tilde{\Phi})\big|\notag\\
=&\big|\Big(\frac{(\gamma-1)\varphi'(\Phi_{2}^{(l-1)}+\tilde{\Phi}_{2})}{16\gamma\varphi(\Phi_{2}^{(l-1)}+\tilde{\Phi}_{2})}(\Phi_{3}^{(l-1)}
-\Phi_{1}^{(l-1)})^{2}
+\frac{(\gamma-1)\varphi'(\Phi_{2}^{(l-1)}+\tilde{\Phi}_{2})}{8\gamma\varphi(\Phi_{2}^{(l-1)}+\tilde{\Phi}_{2})}(\Phi_{3}^{(l-1)}
-\Phi_{1}^{(l-1)})(\tilde{\Phi}_{3}-\tilde{\Phi}_{1})\notag\\
&+\frac{(\gamma-1)\varphi'(\Phi_{2}^{(l-1)}+\tilde{\Phi}_{2})}{16\gamma\varphi(\Phi_{2}^{(l-1)}+\tilde{\Phi}_{2})}(\tilde{\Phi}_{3}
-\tilde{\Phi}_{1})^{2}\Big)
\mu_{1}(\Phi^{(l-1)}+\tilde{\Phi})\big|\notag\\
\leq&\Big(\big|\frac{(\gamma-1)\varphi'(\Phi_{2}^{(l-1)}+\tilde{\Phi}_{2})}{4\gamma\varphi(\Phi_{2}^{(l-1)}+\tilde{\Phi}_{2})}\big|
(M_{1}\epsilon)^{2}
+\big|\frac{\varphi'(\Phi_{2}^{(l-1)}+\tilde{\Phi}_{2})}{\gamma\varphi(\Phi_{2}^{(l-1)}+\tilde{\Phi}_{2})}\big|M_{1}\epsilon\epsilon_{0}
+\big|\frac{\varphi'(\Phi_{2}^{(l-1)}+\tilde{\Phi}_{2})}{\gamma(\gamma-1)\varphi(\Phi_{2}^{(l-1)}+\tilde{\Phi}_{2})}\big|\epsilon_{0}^{2}\Big)
\mathcal{K}\notag\\
\leq&C\epsilon_{0}^{2}.\label{c53}
\end{align}
Then by~\eqref{c29}-\eqref{c31}, \eqref{c42} and~\eqref{c50}-\eqref{c53}, we get
\begin{align}
\|\Phi_{1}^{(l)}\|_{C^{0}}\leq&\frac{M_{1}\epsilon-99\mathcal{M}\epsilon}{F_{1}(x)}+C\epsilon^{2}
+\frac{F_{1}(x)-1}{F_{1}(x)}M_{1}\epsilon+C\epsilon_{0}\epsilon\notag\\
=&M_{1}\epsilon-\frac{99\mathcal{M}\epsilon}{F_{1}(x)}+C\epsilon^{2}+C\epsilon_{0}\epsilon\notag\\
\leq&\hbar_{1}M_{1}\epsilon,\label{c54}
\end{align}
where the constant $\hbar_{1}$ satisfies $0<\hbar_{1}<\eta<1$.

At the boundary $x=0$, by~\eqref{B15} and~\eqref{c54}, we have
\begin{align}
\|\Phi_{2}^{(l)}(\cdot,0)\|_{C^{0}(\mathbb{R})}\leq M_{0}+\epsilon+|K_{2}|\hbar_{1}M_{1}\epsilon,\label{c55}
\end{align}
then with the aid of~\eqref{c6} and~\eqref{c55}, we get
\begin{align}
\|\Phi_{2}^{(l)}\|_{C^{0}}=\|\Phi_{2}^{(l)}(\cdot,0)\|_{C^{0}(\mathbb{R})}\leq M_{0}+\epsilon+|K_{2}|\hbar_{1}M_{1}\epsilon< 2M_{0}.\label{c56}
\end{align}

Next, we give the $C^{1}$ estimates of $\Phi_{i}^{(l)}(i=1,2,3)$.
Letting
$$\Psi_{i}^{(l)}=\partial_{t}\Phi_{i}^{(l)},\quad i=1,2,3,~~\forall l\in\mathbb{N},$$
differentiating equations~\eqref{c4}, \eqref{c6} and the boundary conditions~\eqref{c5}, \eqref{c7} with respect to $t$, we get
\begin{align}
&\partial_{x}\Psi_{1}^{(l)}+\mu_{1}(\Phi^{(l-1)}+\tilde{\Phi})\partial_{t}\Psi_{1}^{(l)}\notag\\
=&-\frac{\varphi'(\Phi_{2}^{(l-1)}+\tilde{\Phi}_{2})}{4\gamma\varphi(\Phi_{2}^{(l-1)}+\tilde{\Phi}_{2})}(\Phi_{3}^{(l-1)}+\tilde{\Phi}_{3}
-\Phi_{1}^{(l-1)}-\tilde{\Phi}_{1})\Big(\frac{\partial\Psi_{2}^{(l-1)}}{\partial x}+\mu_{1}(\Phi^{(l-1)}+\tilde{\Phi})\frac{\partial\Psi_{2}^{(l-1)}}{\partial t}\Big)\notag\\
&+\frac{\alpha}{2}(1-\frac{(\gamma+1)(\tilde{\Phi}_{1}+\tilde{\Phi}_{3})}{(\gamma+1)\tilde{\Phi}_{1}+(3-\gamma)\tilde{\Phi}_{3}})
\mu_{1}(\tilde{\Phi})\Psi_{3}^{(l-1)}
+\frac{\alpha}{2}(1-\frac{(\gamma+1)(\tilde{\Phi}_{1}+\tilde{\Phi}_{3})}{(\gamma+1)\tilde{\Phi}_{1}+(3-\gamma)\tilde{\Phi}_{3}})
\mu_{1}(\tilde{\Phi})\Psi_{1}^{(l)}\notag\\
&-\Big(\frac{\partial\mu_{1}(\Phi^{(l-1)}+\tilde{\Phi})}{\partial\Phi_{1}^{(l-1)}}\Psi_{1}^{(l-1)}
+\frac{\partial\mu_{1}(\Phi^{(l-1)}+\tilde{\Phi})}{\partial\Phi_{3}^{(l-1)}}\Psi_{3}^{(l-1)}\Big)\Psi_{1}^{(l)}\notag\\
&+\frac{\alpha(\gamma-1)(\tilde{\Phi}_{1}+\tilde{\Phi}_{3})}{(\gamma+1)\tilde{\Phi}_{1}+(3-\gamma)\tilde{\Phi}}_{3}\mu_{1}(\Phi^{(l-1)}
+\tilde{\Phi})\Psi_{3}^{(l-1)}\notag\\
&+\frac{(\gamma-1)\Big(\varphi(\Phi_{2}^{(l-1)}+\tilde{\Phi}_{2})\varphi''(\Phi_{2}^{(l-1)}+\tilde{\Phi}_{2})-(\varphi'(\Phi_{2}^{(l-1)}
+\tilde{\Phi}_{2}))^{2}\Big)}{16\gamma\varphi^{2}(\Phi_{2}^{(l-1)}+\tilde{\Phi}_{2})}(\Phi_{3}^{(l-1)}
+\tilde{\Phi}_{3}-\Phi_{1}^{(l-1)}-\tilde{\Phi}_{1})^{2}\notag\\
&\cdot\mu_{1}(\Phi^{(l-1)}+\tilde{\Phi})\frac{\partial\Phi_{2}^{(l-1)}}{\partial x}\frac{\partial\Phi_{2}^{(l-1)}}{\partial t}\notag\\
&+\frac{\alpha(\gamma-1)(\tilde{\Phi}_{1}+\tilde{\Phi}_{3})}{(\gamma+1)\tilde{\Phi}_{1}
+(3-\gamma)\tilde{\Phi}_{3}}\Big(\frac{\partial\mu_{1}(\Phi^{(l-1)}+\tilde{\Phi})}{\partial\Phi_{1}^{(l-1)}}\Psi_{1}^{(l-1)}
+\frac{\partial\mu_{1}(\Phi^{(l-1)}+\tilde{\Phi})}{\partial\Phi_{3}^{(l-1)}}\Psi_{3}^{(l-1)}\Big)\Phi_{3}^{(l-1)}\notag\\
&+\frac{\alpha}{2}(1-\frac{(\gamma+1)(\tilde{\Phi}_{1}+\tilde{\Phi}_{3})}{(\gamma+1)\tilde{\Phi}_{1}
+(3-\gamma)\tilde{\Phi}_{3}})\Big(\mu_{1}(\Phi^{(l-1)}+\tilde{\Phi})-\mu_{1}(\tilde{\Phi})\Big)(\Psi_{1}^{(l-1)}+\Psi_{3}^{(l-1)})\notag\\
&-\frac{\varphi'(\Phi_{2}^{(l-1)}+\tilde{\Phi}_{2})}{4\gamma\varphi(\Phi_{2}^{(l-1)}+\tilde{\Phi}_{2})}(\Phi_{3}^{(l-1)}+\tilde{\Phi}_{3}
-\Phi_{1}^{(l-1)}-\tilde{\Phi}_{1})(\frac{\partial\mu_{1}(\Phi^{(l-1)}+\tilde{\Phi})}{\partial\Phi_{1}^{(l-1)}}\Psi_{1}^{(l-1)}\notag\\
&+\frac{\partial\mu_{1}(\Phi^{(l-1)}+\tilde{\Phi})}{\partial\Phi_{3}^{(l-1)}}\Psi_{3}^{(l-1)})\frac{\partial\Phi_{2}^{(l-1)}}
{\partial t}\notag\\
&+\frac{(\gamma-1)\varphi'(\Phi_{2}^{(l-1)}+\tilde{\Phi}_{2})}{16\gamma\varphi(\Phi_{2}^{(l-1)}+\tilde{\Phi}_{2})}\mu_{1}(\Phi^{(l-1)}
+\tilde{\Phi})(\partial_{t}\Phi_{3}^{(l-1)}-\partial_{t}\Phi_{1}^{(l-1)})\notag\\
&\cdot(\Phi_{3}^{(l-1)}+\tilde{\Phi}_{3}-\Phi_{1}^{(l-1)}-\tilde{\Phi}_{1})
\frac{\partial\Phi_{2}^{(l-1)}}{\partial x}\notag\\
&+\frac{\alpha}{2}(1-\frac{(\gamma+1)(\tilde{\Phi}_{1}+\tilde{\Phi}_{3})}{(\gamma+1)\tilde{\Phi}_{1}
+(3-\gamma)\tilde{\Phi}_{3}})\Big(\frac{\partial\mu_{1}(\Phi^{(l-1)}+\tilde{\Phi})}{\partial\Phi_{1}^{(l-1)}}\Psi_{1}^{(l-1)}\notag\\
&+\frac{\partial\mu_{1}(\Phi^{(l-1)}+\tilde{\Phi})}{\partial\Phi_{3}^{(l-1)}}\Psi_{3}^{(l-1)}\Big)(\Phi_{1}^{(l-1)}+\Phi_{3}^{(l-1)})\notag\\
=&-\frac{\varphi'(\Phi_{2}^{(l-1)}+\tilde{\Phi}_{2})}{4\gamma\varphi(\Phi_{2}^{(l-1)}+\tilde{\Phi}_{2})}(\Phi_{3}^{(l-1)}+\tilde{\Phi}_{3}
-\Phi_{1}^{(l-1)}-\tilde{\Phi}_{1})\Big(\frac{\partial\Psi_{2}^{(l-1)}}{\partial x}+\mu_{1}(\Phi^{(l-1)}+\tilde{\Phi})\frac{\partial\Psi_{2}^{(l-1)}}{\partial t}\Big)\notag\\
&+\frac{\alpha}{2}(1-\frac{(\gamma+1)(\tilde{\Phi}_{1}+\tilde{\Phi}_{3})}{(\gamma+1)\tilde{\Phi}_{1}+(3-\gamma)\tilde{\Phi}_{3}})
\mu_{1}(\tilde{\Phi})\Psi_{3}^{(l-1)}
+\frac{\alpha}{2}(1-\frac{(\gamma+1)(\tilde{\Phi}_{1}+\tilde{\Phi}_{3})}{(\gamma+1)\tilde{\Phi}_{1}+(3-\gamma)\tilde{\Phi}_{3}})
\mu_{1}(\tilde{\Phi})\Psi_{1}^{(l)}\notag\\
&-\Big(\frac{\partial\mu_{1}(\Phi^{(l-1)}+\tilde{\Phi})}{\partial\Phi_{1}^{(l-1)}}\Psi_{1}^{(l-1)}
+\frac{\partial\mu_{1}(\Phi^{(l-1)}+\tilde{\Phi})}{\partial\Phi_{3}^{(l-1)}}\Psi_{3}^{(l-1)}\Big)\Psi_{1}^{(l)}\notag\\
&+\frac{\alpha(\gamma-1)(\tilde{\Phi}_{1}+\tilde{\Phi}_{3})}{(\gamma+1)\tilde{\Phi}_{1}+(3-\gamma)\tilde{\Phi}}_{3}\mu_{1}(\Phi^{(l-1)}
+\tilde{\Phi})\Psi_{3}^{(l-1)}+O(\epsilon^{2}),\label{c57}\\
&x=L:~~\Psi_{1}^{(l)}(t,L)=\mathcal{H}'_{1}(t)+K_{1}\Psi_{3}^{(l-1)}(t,L),\label{c58}
\end{align}
\begin{align}
&\partial_{x}\Psi_{2}^{(l)}+\mu_{2}(\Phi^{(l-1)}+\tilde{\Phi})\partial_{t}\Psi_{2}^{(l)}
=-\Big(\frac{\partial\mu_{2}(\Phi^{(l-1)}+\tilde{\Phi})}{\partial\Phi_{1}^{(l-1)}}\Psi_{1}^{(l-1)}
+\frac{\partial\mu_{2}(\Phi^{(l-1)}+\tilde{\Phi})}{\partial\Phi_{3}^{(l-1)}}\Psi_{3}^{(l-1)}\Big)\Psi_{2}^{(l)},\label{c59}\\
&x=0:~~ \Psi_{2}^{(l)}(t,0)=\mathcal{H}'_{2}(t)+K_{2}\Psi_{1}^{(l)}(t,0).\label{c60}
\end{align}
Here there is a difficult point in this paper, which is that the equation~\eqref{c4} has the derivative loss. However, we find that the derivative of the entropy over $x$ can be written as the derivative along the $1$-st characteristic curve direction, this kind of method can overcome the derivative loss.
Specially, by~\eqref{b9}, $\mu_{1}(\Phi^{(l-1)}+\tilde{\Phi})=\lambda_{1}^{-1}(\Phi^{(l-1)}+\tilde{\Phi})$ and
\begin{align*}
&\lambda_{1}(\Phi^{(l-1)}+\tilde{\Phi})-\lambda_{2}(\Phi^{(l-1)}+\tilde{\Phi})\\
=&\frac{\gamma+1}{4}(\Phi_{1}^{(l-1)}+\tilde{\Phi}_{1})+\frac{3-\gamma}{4}(\Phi_{3}^{(l-1)}+\tilde{\Phi}_{3})-\frac{1}{2}(\Phi_{1}^{(l-1)}
+\tilde{\Phi}_{1}+\Phi_{3}^{(l-1)}+\tilde{\Phi}_{3})\\
=&-\frac{\gamma-1}{4}(\Phi_{3}^{(l-1)}+\tilde{\Phi}_{3}-\Phi_{1}^{(l-1)}-\tilde{\Phi}_{1}),
\end{align*}
we get
\begin{align}\label{CC1}
&\frac{(\gamma-1)\varphi'(\Phi_{2}^{(l-1)}+\tilde{\Phi}_{2})}{16\gamma\varphi(\Phi_{2}^{(l-1)}+\tilde{\Phi}_{2})}(\Phi_{3}^{(l-1)}
+\tilde{\Phi}_{3}-\Phi_{1}^{(l-1)}-\tilde{\Phi}_{1})^{2}\mu_{1}(\Phi^{(l-1)}+\tilde{\Phi})\frac{\partial\Phi_{2}^{(l-1)}}{\partial x}\notag\\
=&-\frac{\varphi'(\Phi_{2}^{(l-1)}+\tilde{\Phi}_{2})}{4\gamma\varphi(\Phi_{2}^{(l-1)}+\tilde{\Phi}_{2})}(\Phi_{3}^{(l-1)}
+\tilde{\Phi}_{3}-\Phi_{1}^{(l-1)}-\tilde{\Phi}_{1})(\frac{\partial\Phi_{2}^{(l-1)}}{\partial x}+\mu_{1}(\Phi^{(l-1)}+\tilde{\Phi})
\frac{\partial\Phi_{2}^{(l-1)}}{\partial t}).
\end{align}

At the boundary $x=L$, by~\eqref{b15}, \eqref{c31} and~\eqref{c49}, we have
\begin{align}
\|\Psi_{1}^{(l)}(\cdot,L)\|_{C^{0}(\mathbb{R})}\leq\epsilon+|K_{1}|M_{1}\epsilon\leq M_{1}\epsilon-99\mathcal{M}\epsilon.\label{c61}
\end{align}
Since the right side of~\eqref{c57} has the term related to $\Psi_{1}^{(l)}$ and this term cannot be estimated by the a prior estimate,
like the calculations in~\cite{Qup}, we introduce the method of "estimation by twice integration". We first get a rough estimate of $\Psi_{1}^{(l)}$ as follows.

We multiply $\mathrm{sgn}(\Psi_{1}^{(l)})$ on both sides of~\eqref{c57} to get
\begin{align}
&\partial_{x}|\Psi_{1}^{(l)}|+\mu_{1}(\Phi^{(l-1)}+\tilde{\Phi})\partial_{t}|\Psi_{1}^{(l)}|\notag\\
=&\Big(\frac{\alpha}{2}(1-\frac{(\gamma+1)(\tilde{\Phi}_{1}+\tilde{\Phi}_{3})}{(\gamma+1)\tilde{\Phi}_{1}+(3-\gamma)\tilde{\Phi}_{3}})
\mu_{1}(\tilde{\Phi})-\big(\frac{\partial\mu_{1}(\Phi^{(l-1)}+\tilde{\Phi})}{\partial\Phi_{1}^{(l-1)}}\Psi_{1}^{(l-1)}
+\frac{\partial\mu_{1}(\Phi^{(l-1)}+\tilde{\Phi})}{\partial\Phi_{3}^{(l-1)}}\Psi_{3}^{(l-1)}\big)\Big)|\Psi_{1}^{(l)}|\notag\\
&-\frac{\varphi'(\Phi_{2}^{(l-1)}+\tilde{\Phi}_{2})}{4\gamma\varphi(\Phi_{2}^{(l-1)}+\tilde{\Phi}_{2})}\mathrm{sgn}(\Psi_{1}^{(l)})(\Phi_{3}^{(l-1)}+\tilde{\Phi}_{3}
-\Phi_{1}^{(l-1)}-\tilde{\Phi}_{1})(\frac{\partial\Psi_{2}^{(l-1)}}{\partial x}+\mu_{1}(\Phi^{(l-1)}+\tilde{\Phi})\frac{\partial\Psi_{2}^{(l-1)}}{\partial t})\notag\\
&+\frac{\alpha(\gamma-1)(\tilde{\Phi}_{1}+\tilde{\Phi}_{3})}{(\gamma+1)\tilde{\Phi}_{1}+(3-\gamma)\tilde{\Phi}_{3}}
\mathrm{sgn}(\Psi_{1}^{(l)})\mu_{1}(\Phi^{(l-1)}+\tilde{\Phi})\Psi_{3}^{(l-1)}\notag\\
&+\frac{\alpha}{2}(1-\frac{(\gamma+1)(\tilde{\Phi}_{1}+\tilde{\Phi}_{3})}{(\gamma+1)\tilde{\Phi}_{1}+(3-\gamma)\tilde{\Phi}_{3}})
\mathrm{sgn}(\Psi_{1}^{(l)})\mu_{1}(\tilde{\Phi})\Psi_{3}^{(l-1)}+O(\epsilon^{2}),\label{c62}
\end{align}
By~\eqref{a2}, \eqref{A3}, \eqref{B13}-\eqref{a4}, \eqref{c29} and~\eqref{c31}, there exists a constant $C>0$ such that
$$\frac{\alpha}{2}(1-\frac{(\gamma+1)(\tilde{\Phi}_{1}+\tilde{\Phi}_{3})}{(\gamma+1)\tilde{\Phi}_{1}+(3-\gamma)\tilde{\Phi}_{3}})
\mu_{1}(\tilde{\Phi})-\big(\frac{\partial\mu_{1}(\Phi^{(l-1)}+\tilde{\Phi})}{\partial\Phi_{1}^{(l-1)}}\Psi_{1}^{(l-1)}
+\frac{\partial\mu_{1}(\Phi^{(l-1)}+\tilde{\Phi})}{\partial\Phi_{3}^{(l-1)}}\Psi_{3}^{(l-1)}\big)>-C\epsilon,$$
then integrating~\eqref{c62} along the 1-st characteristic curve $t=t_{1}^{(l)}(y;t,x)$ from $L$ to $x$ and using~\eqref{a2}, \eqref{A3}, \eqref{B13}-\eqref{a4}, \eqref{c29}-\eqref{c31} and~\eqref{c61}, we get
\begin{align}
|\Psi_{1}^{(l)}(t,x)|\leq C\epsilon,\quad \forall (t,x)\in\mathbb{R}\times[0,L].\label{c63}
\end{align}
Next, we change equation~\eqref{c57} into the following equation of $F_{1}\Psi_{1}^{(l)}$
\begin{align}
&\partial_{x}(F_{1}\Psi_{1}^{(l)})+\mu_{1}(\Phi^{(l-1)}+\tilde{\Phi})\partial_{t}(F_{1}\Psi_{1}^{(l)})\notag\\
=&-\frac{\varphi'(\Phi_{2}^{(l-1)}+\tilde{\Phi}_{2})}{4\gamma\varphi(\Phi_{2}^{(l-1)}+\tilde{\Phi}_{2})}F_{1}(\Phi_{3}^{(l-1)}+\tilde{\Phi}_{3}
-\Phi_{1}^{(l-1)}-\tilde{\Phi}_{1})(\frac{\partial\Psi_{2}^{(l-1)}}{\partial x}+\mu_{1}(\Phi^{(l-1)}+\tilde{\Phi})\frac{\partial\Psi_{2}^{(l-1)}}{\partial t})\notag\\
&+\frac{\alpha}{2}(1-\frac{(\gamma+1)(\tilde{\Phi}_{1}+\tilde{\Phi}_{3})}{(\gamma+1)\tilde{\Phi}_{1}+(3-\gamma)\tilde{\Phi}_{3}})F_{1}
\mu_{1}(\tilde{\Phi})\Psi_{3}^{(l-1)}
+\frac{\alpha(\gamma-1)(\tilde{\Phi}_{1}+\tilde{\Phi}_{3})}{(\gamma+1)\tilde{\Phi}_{1}+(3-\gamma)\tilde{\Phi}}_{3}F_{1}\mu_{1}(\Phi^{(l-1)}
+\tilde{\Phi})\Psi_{3}^{(l-1)}\notag\\
&-F_{1}\Big(\frac{\partial\mu_{1}(\Phi^{(l-1)}+\tilde{\Phi})}{\partial\Phi_{1}^{(l-1)}}\Psi_{1}^{(l-1)}
+\frac{\partial\mu_{1}(\Phi^{(l-1)}+\tilde{\Phi})}{\partial\Phi_{3}^{(l-1)}}\Psi_{3}^{(l-1)}\Big)\Psi_{1}^{(l)}+O(\epsilon^{2})
,\label{C64}
\end{align}
then we integrate it along the 1-st characteristic curve $t=t_{1}^{(l)}(y;t,x)$ from $L$ to $x$ and use~\eqref{a2}, \eqref{A3}, \eqref{B13}-\eqref{a4}, \eqref{c29}-\eqref{c31}, \eqref{c42}, \eqref{c61} and~\eqref{c63} to get
\begin{align}
\|\Psi_{1}^{(l)}\|_{C^{0}}\leq&\frac{M_{1}\epsilon-99\mathcal{M}\epsilon}{F_{1}(x)}+C\epsilon^{2}
+\frac{F_{1}(x)-1}{F_{1}(x)}M_{1}\epsilon+C\epsilon_{0}\epsilon\notag\\
=&M_{1}\epsilon-\frac{99\mathcal{M}\epsilon}{F_{1}(x)}+C\epsilon^{2}+C\epsilon_{0}\epsilon\notag\\
<&\hbar_{2}M_{1}\epsilon,\label{c64}
\end{align}
where the constant $0<\hbar_{2}<1$.

By applying the equation~\eqref{c4} and using~\eqref{a2}, \eqref{A3}, \eqref{B13}-\eqref{a4}, \eqref{c29}-\eqref{c31}, \eqref{c54} and~\eqref{c64}, one has
\begin{align}
\|\partial_{x}\Phi_{1}^{(l)}\|_{C^{0}}\leq&\mathcal{K}\hbar_{2}M_{1}\epsilon-\alpha_{*}(1+\frac{\gamma+1}{2}\mathcal{K}c_{-})\mathcal{K}M_{1}\epsilon
+C\epsilon^{2}+C\epsilon_{0}\epsilon\notag\\
\leq& M_{2}\epsilon,\label{c65}
\end{align}
where the constant $M_{2}>\mathcal{K}\hbar_{2}M_{1}-\alpha_{*}(1+\frac{\gamma+1}{2}\mathcal{K}c_{-})\mathcal{K}M_{1}$. This means that we complete the proof of the estimates~\eqref{c18}.

At the boundary $x=0$, by~\eqref{B15} and~\eqref{c64}, we have
\begin{align}
|\Psi_{2}^{(l)}(t,0)|\leq \epsilon+|K_{2}|\hbar_{2}M_{1}\epsilon,\label{c66}
\end{align}
In the domain $\mathcal{D}=\{(t,x)|t\in\mathbb{R},x\in[0,L]\}$, using~\eqref{c29}, \eqref{c31} and~\eqref{c66}, we get
\begin{align}
\|\Psi_{2}^{(l)}(t,x)\|_{C^{0}}\leq e^{-C\epsilon L}(\epsilon+|K_{2}|\hbar_{2}M_{1}\epsilon)\leq M_{1}\epsilon.\label{c67}
\end{align}
With the aid of the equation~\eqref{c6}, the hypothesis~\eqref{B13} and the estimates~\eqref{c67}, one has
\begin{align}
\|\partial_{x}\Phi_{2}^{(l)}\|_{C^{0}}\leq \mathcal{K}M_{1}\epsilon\leq M_{2}\epsilon.\label{c68}
\end{align}
The proof of~\eqref{c19} complete.

Now, we start the proof of~\eqref{c21}. First, we get directly from~\eqref{c10} and~\eqref{c54}
\begin{align}
\|\Phi_{1}^{(1)}(t,x)-\Phi_{1}^{(0)}(t,x)\|_{C^{0}}\leq\hbar_{1}M_{1}\epsilon<M_{1}\epsilon\eta.\label{LL1}
\end{align}
Then, by~\eqref{B15}, \eqref{c10} and~\eqref{c56}, we have
\begin{align}
\|\Phi_{2}^{(1)}(t,x)-\Phi_{2}^{(0)}(t,x)\|_{C^{0}}=\|\Phi_{2}^{(1)}(t,0)-M_{0}\|_{C^{0}}
=\|\mathcal{H}_{2}(t)+K_{2}\Phi_{1}^{(0)}(t,0)-M_{0}\|_{C^{0}}\leq \epsilon<M_{1}\epsilon\eta.\label{LL2}
\end{align}
Next we prove~\eqref{c21} for $l\geq2$.
Select $\eta<1$ satisfying
$$ \eta>K.$$
At the boundary $x=L$, by~\eqref{c5} and~\eqref{c32}, we have
\begin{align}
\|\Phi_{1}^{(l)}(t,L)-\Phi_{1}^{(l-1)}(t,L)\|_{C^{0}}\leq&|K_{1}|\|\Phi_{3}^{(l-1)}(t,L)-\Phi_{3}^{(l-2)}(t,L)\|_{C^{0}}\notag\\
\leq&|K_{1}|M_{1}\epsilon\eta^{l-1}.\label{c69}
\end{align}
From the equations~\eqref{c4}, we get
\begin{align}
&\partial_{x}(\Phi_{1}^{(l)}-\Phi_{1}^{(l-1)})+\mu_{1}(\Phi^{(l-1)}+\tilde{\Phi})\partial_{t}(\Phi_{1}^{(l)}-\Phi_{1}^{(l-1)})\notag\\
=&-\frac{\varphi'(\Phi_{2}^{(l-2)}+\tilde{\Phi}_{2})}{4\gamma\varphi(\Phi_{2}^{(l-2)}+\tilde{\Phi}_{2})}(\Phi_{3}^{(l-2)}
+\tilde{\Phi}_{3}-\Phi_{1}^{(l-2)}-\tilde{\Phi}_{1})\Big(\frac{\partial}{\partial x}(\Phi_{2}^{(l-1)}-\Phi_{2}^{(l-2)})\notag\\
&+\mu_{1}(\Phi^{(l-1)}+\tilde{\Phi})
\frac{\partial}{\partial t}(\Phi_{2}^{(l-1)}-\Phi_{2}^{(l-2)})\Big)\notag\\
&+\frac{\alpha}{2}(1-\frac{(\gamma+1)(\tilde{\Phi}_{1}+\tilde{\Phi}_{3})}{(\gamma+1)\tilde{\Phi}_{1}+(3-\gamma)\tilde{\Phi}_{3}})\mu_{1}
(\tilde{\Phi})(\Phi_{3}^{(l-1)}-\Phi_{3}^{(l-2)})\notag\\
&+\frac{\alpha(\gamma-1)(\tilde{\Phi}_{1}+\tilde{\Phi}_{3})}{(\gamma+1)\tilde{\Phi}_{1}+(3-\gamma)\tilde{\Phi}_{3}}\mu_{1}(\Phi^{(l-1)}
+\tilde{\Phi})(\Phi_{3}^{(l-1)}-\Phi_{3}^{(l-2)})\notag\\
&+\frac{\alpha}{2}(1-\frac{(\gamma+1)(\tilde{\Phi}_{1}+\tilde{\Phi}_{3})}{(\gamma+1)\tilde{\Phi}_{1}+(3-\gamma)\tilde{\Phi}_{3}})\mu_{1}
(\tilde{\Phi})(\Phi_{1}^{(l)}-\Phi_{1}^{(l-1)})\notag\\
&-\frac{\varphi'(\Phi_{2}^{(l-2)}+\tilde{\Phi}_{2})}{4\gamma\varphi(\Phi_{2}^{(l-2)}+\tilde{\Phi}_{2})}(\Phi_{3}^{(l-2)}
+\tilde{\Phi}_{3}-\Phi_{1}^{(l-2)}-\tilde{\Phi}_{1})\Big(\mu_{1}(\Phi^{(l-1)}+\tilde{\Phi})\notag\\
&-\mu_{1}(\Phi^{(l-2)}+\tilde{\Phi})\Big)
\frac{\partial\Phi_{2}^{(l-2)}}{\partial t}\notag\\
&+\frac{(\gamma-1)\varphi'(\Phi_{2}^{(l-2)}+\tilde{\Phi}_{2})}{16\gamma\varphi(\Phi_{2}^{(l-2)}+\tilde{\Phi}_{2})}\Big((\Phi_{3}^{(l-1)}
-\Phi_{3}^{(l-2)})-(\Phi_{1}^{(l-1)}-\Phi_{1}^{(l-2)})\Big)\notag\\
&\cdot(\Phi_{3}^{(l-1)}
+\tilde{\Phi}_{3}-\Phi_{1}^{(l-1)}-\tilde{\Phi}_{1})
\mu_{1}(\Phi^{(l-1)}+\tilde{\Phi})\frac{\partial\Phi_{2}^{(l-1)}}{\partial x}\notag\\
&+\frac{\alpha}{2}(1-\frac{(\gamma+1)(\tilde{\Phi}_{1}+\tilde{\Phi}_{3})}{(\gamma+1)\tilde{\Phi}_{1}+(3-\gamma)\tilde{\Phi}_{3}})\Big
(\mu_{1}(\Phi^{(l-1)}+\tilde{\Phi})-\mu_{1}(\Phi^{(l-2)}+\tilde{\Phi})\Big)\Phi_{1}^{(l-2)}\notag\\
&+\frac{\alpha}{2}(1-\frac{(3-\gamma)(\tilde{\Phi}_{1}+\tilde{\Phi}_{3})}{(\gamma+1)\tilde{\Phi}_{1}+(3-\gamma)\tilde{\Phi}_{3}})
\Big(\mu_{1}(\Phi^{(l-1)}+\tilde{\Phi})-\mu_{1}(\Phi^{(l-2)}+\tilde{\Phi})\Big)\Phi_{3}^{(l-2)}\notag\\
&+\frac{\gamma-1}{16\gamma}(\frac{\varphi'(\Phi_{2}^{(l-1)}+\tilde{\Phi}_{2})}{\varphi(\Phi_{2}^{(l-1)}+\tilde{\Phi}_{2})}
-\frac{\varphi'(\Phi_{2}^{(l-2)}+\tilde{\Phi}_{2})}{\varphi(\Phi_{2}^{(l-2)}+\tilde{\Phi}_{2})})(\Phi_{3}^{(l-1)}
+\tilde{\Phi}_{3}-\Phi_{1}^{(l-1)}-\tilde{\Phi}_{1})^{2}\notag\\
&\cdot\mu_{1}(\Phi^{(l-1)}+\tilde{\Phi})
\frac{\partial\Phi_{2}^{(l-1)}}{\partial x}\notag\\
&+\frac{\alpha}{2}(1-\frac{(\gamma+1)(\tilde{\Phi}_{1}+\tilde{\Phi}_{3})}{(\gamma+1)\tilde{\Phi}_{1}+(3-\gamma)\tilde{\Phi}_{3}})\Big
(\mu_{1}(\Phi^{(l-1)}+\tilde{\Phi})-\mu_{1}(\tilde{\Phi})\Big)\notag\\
&\cdot\Big((\Phi_{1}^{(l-1)}+\Phi_{3}^{(l-1)})
-(\Phi_{1}^{(l-2)}+\Phi_{3}^{(l-2)})\Big)\notag\\
&-\Big(\mu_{1}(\Phi^{(l-1)}+\tilde{\Phi})-\mu_{1}(\Phi^{(l-2)}+\tilde{\Phi})\Big)\partial_{t}\Phi_{1}^{(l-1)},\label{c70}
\end{align}
then multiplying $F_{1}(x)$ on both sides of the equations~\eqref{c70} and integrating along the 1-st characteristic curve $t=t_{1}^{(l)}(y;t,x)$ from $L$ to $x$, we get
\begin{align}
&F_{1}(x)\Big(\Phi_{1}^{(l)}(t,x)-\Phi_{1}^{(l-1)}(t,x)\Big)\notag\\
=&F_{1}(L)\Big(\Phi_{1}^{(l)}(t_{1}^{(l)}(L;t,x),L)-\Phi_{1}^{(l-1)}(t_{1}^{(l)}(L;t,x),L)\Big)\notag\\
&+\int_{L}^{x}\Big(
-\frac{\varphi'(\Phi_{2}^{(l-2)}+\tilde{\Phi}_{2})}{4\gamma\varphi(\Phi_{2}^{(l-2)}+\tilde{\Phi}_{2})}F_{1}(\Phi_{3}^{(l-2)}
+\tilde{\Phi}_{3}-\Phi_{1}^{(l-2)}-\tilde{\Phi}_{1})\Big(\frac{\partial}{\partial x}(\Phi_{2}^{(l-1)}-\Phi_{2}^{(l-2)})\notag\\
&+\mu_{1}(\Phi^{(l-1)}+\tilde{\Phi})
\frac{\partial}{\partial t}(\Phi_{2}^{(l-1)}-\Phi_{2}^{(l-2)})\Big)\notag\\
&+\frac{\alpha(\gamma-1)(\tilde{\Phi}_{1}+\tilde{\Phi}_{3})}{(\gamma+1)\tilde{\Phi}_{1}+(3-\gamma)\tilde{\Phi}_{3}}F_{1}\mu_{1}(\Phi^{(l-1)}
+\tilde{\Phi})(\Phi_{3}^{(l-1)}-\Phi_{3}^{(l-2)})\notag\\
&+\frac{\alpha}{2}(1-\frac{(\gamma+1)(\tilde{\Phi}_{1}+\tilde{\Phi}_{3})}{(\gamma+1)\tilde{\Phi}_{1}+(3-\gamma)\tilde{\Phi}_{3}})F_{1}\big
(\mu_{1}(\Phi^{(l-1)}+\tilde{\Phi})-\mu_{1}(\Phi^{(l-2)}+\tilde{\Phi})\big)\Phi_{1}^{(l-2)}\notag\\
&+\frac{\alpha}{2}(1-\frac{(3-\gamma)(\tilde{\Phi}_{1}+\tilde{\Phi}_{3})}{(\gamma+1)\tilde{\Phi}_{1}+(3-\gamma)\tilde{\Phi}_{3}})F_{1}
\big(\mu_{1}(\Phi^{(l-1)}+\tilde{\Phi})-\mu_{1}(\Phi^{(l-2)}+\tilde{\Phi})\big)\Phi_{3}^{(l-2)}\notag\\
&+\frac{\gamma-1}{16\gamma}(\frac{\varphi'(\Phi_{2}^{(l-1)}+\tilde{\Phi}_{2})}{\varphi(\Phi_{2}^{(l-1)}+\tilde{\Phi}_{2})}
-\frac{\varphi'(\Phi_{2}^{(l-2)}+\tilde{\Phi}_{2})}{\varphi(\Phi_{2}^{(l-2)}+\tilde{\Phi}_{2})})F_{1}(\Phi_{3}^{(l-1)}
+\tilde{\Phi}_{3}-\Phi_{1}^{(l-1)}-\tilde{\Phi}_{1})^{2}\notag\\
&\cdot\mu_{1}(\Phi^{(l-1)}+\tilde{\Phi})
\frac{\partial\Phi_{2}^{(l-1)}}{\partial x}\notag\\
&-\frac{\varphi'(\Phi_{2}^{(l-2)}+\tilde{\Phi}_{2})}{4\gamma\varphi(\Phi_{2}^{(l-2)}+\tilde{\Phi}_{2})}F_{1}(\Phi_{3}^{(l-2)}
+\tilde{\Phi}_{3}-\Phi_{1}^{(l-2)}-\tilde{\Phi}_{1})\Big(\mu_{1}(\Phi^{(l-1)}+\tilde{\Phi})\notag\\
&-\mu_{1}(\Phi^{(l-2)}+\tilde{\Phi})\Big)
\frac{\partial\Phi_{2}^{(l-2)}}{\partial t}\notag\\
&+\frac{(\gamma-1)\varphi'(\Phi_{2}^{(l-2)}+\tilde{\Phi}_{2})}{16\gamma\varphi(\Phi_{2}^{(l-2)}+\tilde{\Phi}_{2})}F_{1}\Big((\Phi_{3}^{(l-1)}
-\Phi_{3}^{(l-2)})-(\Phi_{1}^{(l-1)}-\Phi_{1}^{(l-2)})\Big)\notag\\
&\cdot(\Phi_{3}^{(l-1)}
+\tilde{\Phi}_{3}-\Phi_{1}^{(l-1)}-\tilde{\Phi}_{1})\mu_{1}(\Phi^{(l-1)}+\tilde{\Phi})\frac{\partial\Phi_{2}^{(l-1)}}{\partial x}\notag\\
&+\frac{\alpha}{2}(1-\frac{(\gamma+1)(\tilde{\Phi}_{1}+\tilde{\Phi}_{3})}{(\gamma+1)\tilde{\Phi}_{1}+(3-\gamma)\tilde{\Phi}_{3}})F_{1}\big
(\mu_{1}(\Phi^{(l-1)}+\tilde{\Phi})-\mu_{1}(\tilde{\Phi})\big)\notag\\
&\cdot\big((\Phi_{1}^{(l-1)}+\Phi_{3}^{(l-1)})
-(\Phi_{1}^{(l-2)}+\Phi_{3}^{(l-2)})\big)\notag\\
&-F_{1}\big(\mu_{1}(\Phi^{(l-1)}+\tilde{\Phi})-\mu_{1}(\Phi^{(l-2)}+\tilde{\Phi})\big)\partial_{t}\Phi_{1}^{(l-1)}\Big)(t_{1}^{(l)}(y;t,x),y)dy\notag\\
&+\int_{L}^{x}-(\frac{d}{dy}F_{1}(y)(\Phi_{3}^{(l-1)}-\Phi_{3}^{(l-2)})(t_{1}^{(l)}(y;t,x),y)dy.\label{c71}
\end{align}
Using~\eqref{a2}, \eqref{A3}, \eqref{B13}-\eqref{a4}, \eqref{c29}-\eqref{c32}, \eqref{c42} and~\eqref{c69}, we have
\begin{align}
\|\Phi_{1}^{(l)}-\Phi_{1}^{(l-1)}\|_{C^{0}}\leq&\frac{|K_{1}|M_{1}\epsilon\eta^{l-1}}{F_{1}(x)}+\frac{F_{1}(x)-1}{F_{1}(x)}M_{1}
\epsilon\eta^{l-1}+C\epsilon M_{1}\epsilon\eta^{l-1}+C\epsilon_{0} M_{1}\epsilon\eta^{l-1}\notag\\
\leq&\hbar_{3}M_{1}\epsilon\eta^{l},\label{c72}
\end{align}
where we assume $\hbar_{3}\eta>\frac{F_{1}(x)-1+|K_{1}|}{F_{1}(x)}$ with the constant $0<\hbar_{3}<1$.

In addition, at the boundary $x=0$, it follows from~\eqref{c72} that
\begin{align}
\|\Phi_{2}^{(l)}(t,0)-\Phi_{2}^{(l-1)}(t,0)\|_{C^{0}}\leq |K_{2}|\hbar_{3}M_{1}\epsilon\eta^{l}.\label{c73}
\end{align}
In the domain $\mathcal{D}$, from the equations~\eqref{c6}, we get
\begin{align*}
&\partial_{x}(\Phi_{2}^{(l)}-\Phi_{2}^{(l-1)})+\mu_{2}(\Phi^{(l-1)}+\tilde{\Phi})\partial_{t}(\Phi_{2}^{(l)}-\Phi_{2}^{(l-1)})\\
=&-\big(\mu_{2}(\Phi^{(l-1)}+\tilde{\Phi})-\mu_{2}(\Phi^{(l-2)}+\tilde{\Phi})\big)\partial_{t}\Phi_{2}^{(l-1)},
\end{align*}
then by~\eqref{c30}, \eqref{c32} and~\eqref{c73}, we have
\begin{align}
\|\Phi_{2}^{(l)}-\Phi_{2}^{(l-1)}\|_{C^{0}}\leq &|K_{2}|\hbar_{3}M_{1}\epsilon\eta^{l}+C\epsilon M_{1}\epsilon\eta^{l-1}\notag\\
<&M_{1}\epsilon\eta^{l}.\label{c74}
\end{align}
We complete the proof of~\eqref{c21}.

Now, we show the modulus of continuity for $\Psi_{i}^{(l)}(i=1,2,3)$ on the temporal direction~\eqref{c23}, which is very important to prove~\eqref{c25}.\\
\indent For $\delta\in(0,1)$, we choose
\begin{align}
\Omega(\delta)=\frac{24}{1-K}[\mathcal{K}+1]\Big(\sqrt{\epsilon}\delta+\varpi(\delta|\mathcal{H}'_{1})
+\varpi(\delta|\mathcal{H}'_{2})+\varpi(\delta|\mathcal{H}'_{3})\Big).\label{c75}
\end{align}
Since $\varpi(\delta|\mathcal{H}'_{i})(i=1,2,3)$ are monotonically increasing, bounded and continuous concave functions of $\delta$ and $\mathop{\lim}\limits_{\delta\rightarrow0^{+}}\varpi(\delta|\mathcal{H}'_{i})=0$, then $\Omega(\delta)$ has the same features and
$$\mathop{\lim}\limits_{\delta\rightarrow0^{+}}\Omega(\delta)=0.$$
At the boundary $x=L$, for any given $t_{1},t_{2}\in\mathbb{R}$ with $|t_{1}-t_{2}|\leq\delta\ll1$, one has
\begin{align*}
|\Psi_{1}^{(l)}(t_{1},L)-\Psi_{1}^{(l)}(t_{2},L)|\leq|\mathcal{H}'_{1}(t_{1})-\mathcal{H}'_{1}(t_{2})|+|K_{1}||\Psi_{3}^{(l-1)}(t_{1},L)
-\Psi_{3}^{(l-1)}(t_{2},L)|,
\end{align*}
then by~\eqref{c34} and~\eqref{c75}, we have
\begin{align}
\varpi(\delta|\Psi_{1}^{(l)}(\cdot,L))\leq&\varpi(\delta|\mathcal{H}'_{1})+|K_{1}|\varpi(\delta|\Psi_{3}^{(l-1)}(\cdot,L))\notag\\
\leq&\frac{1-K}{24[\mathcal{K}+1]}\Omega(\delta)+\frac{|K_{1}|}{8[\mathcal{K}+1]}\Omega(\delta)\notag\\
\leq&(\frac{1}{24[\mathcal{K}+1]}+\frac{K}{12[\mathcal{K}+1]})\Omega(\delta)\notag\\
<&\frac{1}{8[\mathcal{K}+1]}\Omega(\delta).\label{c76}
\end{align}
In the domain $\mathcal{D}$, for any $x\in[0,L]$ and $t_{1},t_{2}\in\mathbb{R}$ with $|t_{1}-t_{2}|\leq\delta$, by the definition of the characteristic curve, one has
$$
t_{1}^{(l)}(y;t_{*},x)=\int_{x}^{y}\mu_{1}(\Phi^{(l-1)}(t_{1}^{(l)}(\tilde{y};t_{*},x),\tilde{y})+\tilde{\Phi})d\tilde{y}+t_{*}.
$$
As a consequence,
\begin{align*}
&|t_{1}^{(l)}(y;t_{1},x)-t_{1}^{(l)}(y;t_{2},x)|\notag\\
\leq&|t_{1}-t_{2}|+\int_{x}^{y}|\mu_{1}(\Phi^{(l-1)}(t_{1}^{(l)}(\tilde{y};t_{1},x),\tilde{y})+\tilde{\Phi})
-\mu_{1}(\Phi^{(l-1)}(t_{1}^{(l)}(\tilde{y};t_{2},x),\tilde{y})+\tilde{\Phi})|d\tilde{y}\notag\\
\leq&|t_{1}-t_{2}|+\int_{x}^{y}(|\frac{\partial\mu_{1}}{\partial\Phi_{1}}|+|\frac{\partial\mu_{1}}{\partial\Phi_{3}}|)\|
\Psi_{j}^{(l-1)}\|_{C^{0}}|t_{1}^{(l)}(\tilde{y};t_{1},x)-t_{1}^{(l)}(\tilde{y};t_{2},x)|d\tilde{y},\notag
\end{align*}
and by the Gronwall's inequality, \eqref{c29} and~\eqref{c31}, we have
\begin{align}
&|t_{1}^{(l)}(y;t_{1},x)-t_{1}^{(l)}(y;t_{2},x)|\notag\\
\leq& e^{C\epsilon}|t_{1}-t_{2}|\leq(1+\sqrt{\epsilon})\delta.\label{c77}
\end{align}
By the concavity of $\Omega(\delta)$, we have
$$\frac{1}{1+\sqrt{\epsilon}}\Omega((1+\sqrt{\epsilon})\delta)+\frac{\sqrt{\epsilon}}{1+\sqrt{\epsilon}}\Omega(0)\leq \Omega(\delta),$$
which indicates
$$\Omega((1+\sqrt{\epsilon})\delta)\leq(1+\sqrt{\epsilon})\Omega(\delta).$$
Thus, by~\eqref{c34}, we get
\begin{align}
&|\Psi_{i}^{(l-1)}(t_{1}^{(l)}(y;t_{1},x),y)-\Psi_{i}^{(l-1)}(t_{1}^{(l)}(y;t_{2},x),y)|\notag\\
\leq &\frac{1}{8[\mathcal{K}+1]}\Omega((1+\sqrt{\epsilon})\delta)\notag\\
\leq&\frac{1}{8[\mathcal{K}+1]}(1+\sqrt{\epsilon})\Omega(\delta),\quad i=1,2,3.\label{c78}
\end{align}

Next, we integrate~\eqref{c57} along the 1-st characteristic $t=t_{1}^{(l)}(y;t_{1},x)$ and $t=t_{1}^{(l)}(y;t_{2},x)$ respectively and use the Gronwall's inequality, \eqref{a2}, \eqref{A3}, \eqref{B13}-\eqref{a4}, \eqref{c29}-\eqref{c31} and~\eqref{c76}-\eqref{c78} to get
\begin{align}
|\Psi_{1}^{(l)}(t_{1},x)-\Psi_{1}^{(l)}(t_{2},x)|\leq\frac{1}{8[\mathcal{K}+1]}C\Omega(\delta),\label{c80}
\end{align}
where $C>0$ is a generic constant.

Then using the integral expression~\eqref{C64} of $F_{1}(x)\Psi_{1}^{(l)}(t,x)$ and by~\eqref{a2}, \eqref{A3}, \eqref{B13}-\eqref{a4}, \eqref{c29}-\eqref{c31}, \eqref{c42} and~\eqref{c76}-\eqref{c80}, we have
\begin{align}
\varpi(\delta|\Psi_{1}^{(l)}(\cdot,x))
\leq&\frac{1+\sqrt{\epsilon}}{F_{1}(x)}(\frac{1}{24[\mathcal{K}+1]}+\frac{K}{12[\mathcal{K}+1]})\Omega(\delta)
+\frac{F_{1}(x)-1}{F_{1}(x)}\frac{1}{8[\mathcal{K}+1]}(1+\sqrt{\epsilon})\Omega(\delta)\notag\\
&+C\epsilon(1+\sqrt{\epsilon})\delta
+C\epsilon\frac{1+\sqrt{\epsilon}}{8[\mathcal{K}+1]}\Omega(\delta)
+C\epsilon_{0}\frac{1+\sqrt{\epsilon}}{8[\mathcal{K}+1]}\Omega(\delta)\notag\\
<&\frac{1}{8[\mathcal{K}+1]}\Omega(\delta)-(\frac{1-K}{12[\mathcal{K}+1]\mathcal{M}}-\frac{1+2K}{24[\mathcal{K}+1]\mathcal{M}}\sqrt{\epsilon})\Omega(\delta)
+C\epsilon(1+\sqrt{\epsilon})\delta\notag\\
&+C\epsilon\frac{1+\sqrt{\epsilon}}{8[\mathcal{K}+1]}\Omega(\delta)
+C\epsilon_{0}\frac{1+\sqrt{\epsilon}}{8[\mathcal{K}+1]}\Omega(\delta)\notag\\
\leq&\frac{\hbar_{4}}{8[\mathcal{K}+1]}\Omega(\delta),\label{c81}
\end{align}
where the constant $\hbar_{4}$ satisfies $0<|K_{2}|\hbar_{4}<\frac{2+K}{3}$.

At the boundary $x=0$, similar to~\eqref{c76}, we have
\begin{align}
\varpi(\delta|\Psi_{2}^{(l)}(\cdot,0))\leq&(\frac{1-K}{24[\mathcal{K}+1]}+\frac{|K_{2}|\hbar_{4}}{8[\mathcal{K}+1]})\Omega(\delta)\notag\\
<&\frac{1}{8[\mathcal{K}+1]}\Omega(\delta),\label{c82}
\end{align}
then by the equations~\eqref{c59} and using~\eqref{c19}, \eqref{c29}, \eqref{c31}, \eqref{c34}, \eqref{c77} and~\eqref{c82}, we get
\begin{align}
\varpi(\delta|\Psi_{2}^{(l)}(\cdot,x))\leq&\Big((\frac{1-K}{24[\mathcal{K}+1]}+\frac{|K_{2}|\hbar_{4}}{8[\mathcal{K}+1]})
(1+\sqrt{\epsilon})\Omega(\delta)
+C\epsilon(1+\sqrt{\epsilon})\delta+C\epsilon\frac{1+\sqrt{\epsilon}}{8[\mathcal{K}+1]}\Omega(\delta)\Big)e^{C\epsilon L}\notag\\
\leq&\frac{1}{8[\mathcal{K}+1]}\Omega(\delta).\label{c83}
\end{align}
We complete the proof of the estimates~\eqref{c23}.

Finally, we prove~\eqref{c25}. We first consider the special case that two given points $(t_{1},x_{1})$ and $(t_{2},x_{2})$ with $|t_{1}-t_{2}|\leq\delta,|x_{1}-x_{2}|\leq\delta$ locate on the same characteristic curve $t=t_{1}^{(l)}(x;t_{0},x_{0})$, namely, $t_{2}=t_{1}^{(l)}(x_{2};t_{1},x_{1})$. Using the similar method of~\eqref{c64}, we can get
\begin{align}
|\Psi_{1}^{(l)}(t_{1},x_{1})-\Psi_{1}^{(l)}(t_{2},x_{2})|\leq C\epsilon\delta\leq\frac{1}{12}\Omega(\delta).\label{c84}
\end{align}
Then, for general two points $(t_{1},x_{1})$ and $(t_{2},x_{2})$ with $|t_{1}-t_{2}|\leq\delta,|x_{1}-x_{2}|\leq\delta$, we can choose a point $(t_{3},x_{1})$ locating on the $1$-st characteristic curve passing through $(t_{2},x_{2})$, namely, $t_{3}=t_{1}^{(l)}(x_{1};t_{2},x_{2})$.\\
\indent By~\eqref{B13} and definition~\eqref{c48}, we have
$$|t_{3}-t_{2}|\leq|\mu_{1}||x_{1}-x_{2}|\leq \mathcal{K}\delta,$$
and thus
$$|t_{3}-t_{1}|\leq|t_{3}-t_{2}|+|t_{2}-t_{1}|\leq(\mathcal{K}+1)\delta.$$
Now we combine estimates~\eqref{c81} and~\eqref{c84} to get
\begin{align}
&|\Psi_{1}^{(l)}(t_{1},x_{1})-\Psi_{1}^{(l)}(t_{2},x_{2})|\notag\\
\leq&|\Psi_{1}^{(l)}(t_{1},x_{1})-\Psi_{1}^{(l)}(\frac{[\mathcal{K}+1]t_{1}+t_{3}}{\mathcal{K}+1},x_{1})|\notag\\
&+|\Psi_{1}^{(l)}(\frac{[\mathcal{K}+1]t_{1}+t_{3}}{\mathcal{K}+1},x_{1})
-\Psi_{1}^{(l)}(\frac{([\mathcal{K}+1]-1)t_{1}+2t_{3}}{\mathcal{K}+1},x_{1})|\notag\\
&+\ldots+|\Psi_{1}^{(l)}(\frac{t_{1}+[\mathcal{K}+1]t_{3}}{\mathcal{K}+1},x_{1})-\Psi_{1}^{(l)}(t_{3},x_{1})|
+|\Psi_{1}^{(l)}(t_{3},x_{1})-\Psi_{1}^{(l)}(t_{2},x_{2})|\notag\\
\leq&\frac{[\mathcal{K}+1]+1}{8[\mathcal{K}+1]}\Omega(\delta)+\frac{1}{12}\Omega(\delta)\notag\\
\leq&\frac{1}{3}\Omega(\delta).\label{c85}
\end{align}
The combination of~\eqref{c84} and~\eqref{c85} leads to
\begin{align}
\varpi(\delta|\Psi_{1}^{(l)})\leq\frac{1}{3}\Omega(\delta).\label{c86}
\end{align}
With the aid of equations~\eqref{c4} and by~\eqref{a2}, \eqref{A3}, \eqref{B13}-\eqref{a4}, \eqref{c29}-\eqref{c31}, \eqref{c77} and~\eqref{c86}, we have
\begin{align}
\varpi(\delta|\partial_{x}\Phi_{1}^{(l)})\leq \mathcal{K}\frac{1}{2}\Omega(\delta).\label{c87}
\end{align}
Similarly, we get
\begin{align}
|\Psi_{2}^{(l)}(t_{1},x_{1})-\Psi_{2}^{(l)}(t_{2},x_{2})|\leq\frac{1}{12}\Omega(\delta)\label{c88}
\end{align}
for two given points $(t_{1},x_{1})$ and $(t_{2},x_{2})$ locate on the same characteristic curve $t=t_{1}^{(l)}(x;t_{0},x_{0})$ and
\begin{align}
|\Psi_{2}^{(l)}(t_{1},x_{1})-\Psi_{2}^{(l)}(t_{2},x_{2})|\leq&\frac{[\mathcal{K}+1]+1}{8[\mathcal{K}+1]}\Omega(\delta)
+\frac{1}{12}\Omega(\delta)\notag\\
\leq&\frac{1}{3}\Omega(\delta)\label{c89}
\end{align}
for general two points $(t_{1},x_{1})$ and $(t_{2},x_{2})$.

The combination of~\eqref{c88} and~\eqref{c89} leads to
\begin{align}
\varpi(\delta|\Psi_{2}^{(l)})\leq\frac{1}{3}\Omega(\delta).\label{c90}
\end{align}

Then using~\eqref{c29}-\eqref{c31}, \eqref{c77} and~\eqref{c90}, it follows from that the equtions~\eqref{c6} that
\begin{align}
\varpi(\delta|\partial_{x}\Phi_{2}^{(l)})\leq \mathcal{K}\frac{1}{2}\Omega(\delta).\label{c91}
\end{align}

Thus, by~\eqref{c86}-\eqref{c87} and~\eqref{c90}-\eqref{c91}, we get~\eqref{c25} holds directly.

\end{proof}
\indent With the help of~\proref{p1} and the similar arguments as in \cite{Qu}, the proof of~\theref{t1} could be presented, here we omit the details.
\section{\texorpdfstring{$C^{0}$}~~Stability of the Time-periodic Solution}\label{s4}
\indent\indent In this section, we give the proof of~\theref{t2} to consider the stability of the time-periodic solution obtained in~\theref{t1}. In order to prove the existence of classical solutions $\Phi=\Phi(t,x)$ to problem~\eqref{b8}-\eqref{b13}, we only need to prove the following ~\lemref{l1} on the basis of the existence and uniqueness of local $C^{1}$ solution for the mixed initial-boundary value problem for quasilinear hyperbolic system(cf. Chapter $4$ in \cite{Yuw}).
Using the method in~\cite{Li}, we can give the proof of~\lemref{l1}. Here we omit the details.
\begin{lemma}\label{l1}
When $\frac{\gamma-1}{4}(\tilde{\Phi}_{3}-\tilde{\Phi}_{1})<c_{-}<\epsilon_{0}$, there exists a small constant $\epsilon_{6}>0$ and the positive constant $C_{L}$, such that for any given $\epsilon\in(0,\epsilon_{6})$, there exist $\sigma=\sigma(\epsilon)>0$ and $M_{4}>0$ such that if
 \begin{align*}
 &\|H_{1}\|_{C^{1}(\mathbb{R}_{+})}\leq\sigma,~~\|H_{2}\|_{C^{0}(\mathbb{R}_{+})}\leq M_{4},~~\|\partial_{t}H_{2}\|_{C^{0}(\mathbb{R}_{+})}\leq\sigma,
 ~~\|H_{3}\|_{C^{1}(\mathbb{R}_{+})}\leq\sigma,\\
 &\|{\Phi}_{1_{0}}\|_{C^{1}[0,L]}\leq\sigma,~~\|{\Phi}_{2_{0}}\|_{C^{0}[0,L]}\leq M_{4},~~\|\partial_{x}{\Phi}_{2_{0}}\|_{C^{0}[0,L]}\leq\sigma,
 ~~\|{\Phi}_{3_{0}}\|_{C^{1}[0,L]}\leq\sigma,
 \end{align*}
 then the $C^{1}$ solution $\Phi=\Phi(t,x)$ to the initial-boundary value problem~\eqref{b8}-\eqref{b13} satisfies
\begin{align}
\|\Phi_{1}\|_{C^{1}(D)}\leq C_{L}\epsilon,~\|\Phi_{2}\|_{C^{0}(D)}\leq C_{L},~\max\{\|\partial_{t}\Phi_{2}\|_{C^{0}(D)},\|\partial_{x}\Phi_{2}\|_{C^{0}(D)}\}\leq C_{L}\epsilon,~\|\Phi_{3}\|_{C^{1}(D)}\leq C_{L}\epsilon. \label{d1}
\end{align}
\end{lemma}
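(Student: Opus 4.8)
The plan is to combine the local existence theory for the mixed initial-boundary value problem of quasilinear hyperbolic systems (Chapter~4 of \cite{Yuw}) with a uniform a priori $C^{1}$ bound, so that the local $C^{1}$ solution extends to the whole strip $D$. Since a standard continuation argument closes the proof once such a bound is available, the entire task reduces to establishing \eqref{d1} for any $C^{1}$ solution on its maximal interval of existence. The a priori estimate is then carried out exactly along the lines of \proref{p1}, the only structural difference being that here we track the genuine nonlinear solution $\Phi=\Phi(t,x)$ on $\{t\geq0\}$ rather than the linear iterates, so that each characteristic terminates either on the two lateral boundaries $x=0,L$ or on the initial line $t=0$; in the latter case the smallness of ${\Phi}_{1_{0}},{\Phi}_{3_{0}},\partial_{x}{\Phi}_{2_{0}}$ in the hypotheses is invoked in place of the periodicity. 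Note that the standing assumption $\frac{\gamma-1}{4}(\tilde{\Phi}_{3}-\tilde{\Phi}_{1})<c_{-}$ is simply $\tilde{c}(x)<c_{-}$, which holds automatically by \lemref{L1} and encodes the crucial smallness of the inflow sound speed.

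For the $C^{0}$ part I would first introduce the integrating factors $F_{1}(x),F_{3}(x)$ of \eqref{c42}--\eqref{c43} that absorb the zeroth-order accelerating terms in \eqref{b8} and \eqref{b10}; since $\lambda_{1}<0<\lambda_{3}$ and $\tilde{u}<\tilde{c}$ one has $F_{1},F_{3}\geq1$. Integrating the transformed equations for $F_{1}\Phi_{1}$ and $F_{3}\Phi_{3}$ along the first and third characteristics and using the boundary relations \eqref{b13} and \eqref{b12} together with the dissipativity $|K_{1}|\leq1$, $|K_{3}|\leq1$, $|K_{1}K_{3}|<1$ yields $\|\Phi_{1}\|_{C^{0}},\|\Phi_{3}\|_{C^{0}}\leq C_{L}\epsilon$, provided $c_{-}<\epsilon_{0}$ is small enough that the quadratic entropy source estimated in \eqref{c53} and the residual first-order term of \eqref{c52} are $O(\epsilon_{0})$ and hence subordinate to the boundary gain. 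The entropy equation \eqref{b9} is a pure transport, so integrating it along the second characteristic and using $\Phi_{2}(t,0)=H_{2}+K_{2}\Phi_{1}(t,0)$ gives $\|\Phi_{2}\|_{C^{0}}\leq\|H_{2}\|_{C^{0}}+|K_{2}|\|\Phi_{1}\|_{C^{0}}+\|{\Phi}_{2_{0}}\|_{C^{0}}\leq C_{L}$, a bound that is merely finite rather than small, consistent with the large-entropy regime.

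For the $C^{1}$ part I would differentiate the diagonal system in $t$, set $\Psi_{i}=\partial_{t}\Phi_{i}$, and treat the derivative-loss term $\partial_{x}\Phi_{2}$ precisely as in \eqref{CC1}, rewriting it as the directional derivative $\partial_{x}\Phi_{2}+\mu_{1}\partial_{t}\Phi_{2}$ along the first characteristic (and symmetrically for the third), which cancels the loss. The delicate point, and the \textbf{main obstacle}, is that the equation for $\Psi_{1}$ carries a term proportional to $\Psi_{1}$ itself on its right-hand side, which cannot be closed by a direct Gronwall estimate; following the method of estimation by twice integration from \cite{Qup}, I would first obtain the crude pointwise bound $|\Psi_{1}|\leq C\epsilon$ and then re-integrate the $F_{1}\Psi_{1}$ equation to extract the sharp bound $\|\Psi_{1}\|_{C^{0}}\leq C_{L}\epsilon$, with the same argument for $\Psi_{3}$. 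The spatial derivatives are recovered algebraically from \eqref{b8}--\eqref{b10} via $\partial_{x}\Phi_{i}=-\mu_{i}\partial_{t}\Phi_{i}+(\text{lower order})$, while $\partial_{t}\Phi_{2}$ and $\partial_{x}\Phi_{2}$ are controlled through the transported relation of \eqref{c59}-type to be $O(\epsilon)$; assembling these estimates gives \eqref{d1}, and the continuation argument then furnishes the global $C^{1}$ solution.
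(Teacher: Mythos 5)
Your proposal is essentially the route the paper intends: the authors omit the proof of this lemma entirely, stating only that it follows from the local existence theory of Chapter 4 of \cite{Yuw} together with the method of \cite{Li}, i.e.\ exactly the a priori $C^{1}$ estimate plus continuation argument you describe, with the estimates recycled from the proof of \proref{p1} (integrating factors $F_{1},F_{3}$, the rewriting \eqref{CC1} to cancel the derivative loss, and the ``twice integration'' for the self-referential $\Psi_{1}$ term). Your observation that $\frac{\gamma-1}{4}(\tilde{\Phi}_{3}-\tilde{\Phi}_{1})=\tilde{c}(x)$, so the standing hypothesis is automatic by \lemref{L1}, is correct, and the only detail you gloss over is the bookkeeping needed to close the coupled $\Phi_{1}$--$\Phi_{3}$ boundary reflections globally in time (e.g.\ by induction on time slabs of length $T_{0}$), which is standard in \cite{Li}.
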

Now, we prove~\eqref{b19} inductively. For any $t_{*}>0$ and $N\in\mathbb{N}$, we prove
\begin{align}
&\|\Phi_{1}(t,\cdot)-\Phi_{1}^{(P)}(t,\cdot)\|_{C^{0}}\leq C_{S}\epsilon\xi^{N+1},\quad\forall t\in[t_{*}+T_{0},t_{*}+2T_{0}],\label{D2}\\
&\|\Phi_{2}(t,\cdot)-\Phi_{2}^{(P)}(t,\cdot)\|_{C^{0}}\leq A_{0}C_{S}\epsilon\xi^{N},\quad\forall t\in[t_{*}+T_{0},t_{*}+2T_{0}],\label{d3}\\
&\|\Phi_{3}(t,\cdot)-\Phi_{3}^{(P)}(t,\cdot)\|_{C^{0}}\leq C_{S}\epsilon\xi^{N+1},\quad\forall t\in[t_{*}+T_{0},t_{*}+2T_{0}]\label{d2}
\end{align}
under the hypothesis
\begin{align}
&\|\Phi_{1}(t,\cdot)-\Phi_{1}^{(P)}(t,\cdot)\|_{C^{0}}\leq C_{S}\epsilon\xi^{N},\quad\quad\quad\forall t\in[t_{*},t_{*}+T_{0}],\label{D4}\\
&\|\Phi_{2}(t,\cdot)-\Phi_{2}^{(P)}(t,\cdot)\|_{C^{0}}\leq A_{0}C_{S}\epsilon\xi^{N-1},\quad\forall t\in[t_{*},t_{*}+T_{0}], \label{d5}\\
&\|\Phi_{3}(t,\cdot)-\Phi_{3}^{(P)}(t,\cdot)\|_{C^{0}}\leq C_{S}\epsilon\xi^{N},\quad\quad\quad\forall t\in[t_{*},t_{*}+T_{0}],\label{d4}
\end{align}
where $A_{0}>0$ and $\xi\in(0,1)$ is the constants will be determined later and $\Phi_{i}^{(P)}(t,x)(i=1,2,3)$ is the time-periodic solution obtained in~\theref{t1}.

At the boundary $x=L$, one has
$$
\Phi_{1}(t,L)-\Phi_{1}^{(P)}(t,L)=K_{1}(\Phi_{3}(t,L)-\Phi_{3}^{(P)}(t,L)),
$$
then from~\eqref{d4}, we have
\begin{align}
|\Phi_{1}(t,L)-\Phi_{1}^{(P)}(t,L)|\leq|K_{1}|C_{S}\epsilon\xi^{N}.\label{d6}
\end{align}
As for the interior estimates, we have
\begin{align}
&\partial_{x}\Phi_{1}^{(P)}+\mu_{1}(\Phi^{(P)}+\tilde{\Phi})\partial_{t}\Phi_{1}^{(P)}\notag\\
=
&\frac{\alpha}{2}(1-\frac{(\gamma+1)(\tilde{\Phi}_{1}+\tilde{\Phi}_{3})}{(\gamma+1)\tilde{\Phi}_{1}+(3-\gamma)\tilde{\Phi}_{3}})\mu_{1}
(\tilde{\Phi})\Phi_{1}^{(P)}\notag\\
&+\frac{\alpha}{2}(1-\frac{(\gamma+1)(\tilde{\Phi}_{1}+\tilde{\Phi}_{3})}{(\gamma+1)\tilde{\Phi}_{1}+(3-\gamma)\tilde{\Phi}_{3}})\mu_{1}
(\tilde{\Phi})\Phi_{3}^{(P)}\notag\\
&+\frac{\alpha(\gamma-1)(\tilde{\Phi}_{1}+\tilde{\Phi}_{3})}{(\gamma+1)\tilde{\Phi}_{1}+(3-\gamma)\tilde{\Phi}_{3}}
\mu_{1}(\Phi^{(P)}+\tilde{\Phi})\Phi_{3}^{(P)}\notag\\
&+\frac{\alpha}{2}(1-\frac{(\gamma+1)(\tilde{\Phi}_{1}+\tilde{\Phi}_{3})}
{(\gamma+1)\tilde{\Phi}_{1}+(3-\gamma)\tilde{\Phi}_{3}})\Big(\mu_{1}(\Phi^{(P)}+\tilde{\Phi})-\mu_{1}(\tilde{\Phi})\Big)(\Phi_{1}^{(P)}
+\Phi_{3}^{(P)})\notag\\
&+\frac{(\gamma-1)\varphi'(\Phi_{2}^{(P)}+\tilde{\Phi}_{2})}{16\gamma\varphi(\Phi_{2}^{(P)}+\tilde{\Phi}_{2})}(\Phi_{3}^{(P)}+\tilde{\Phi}_{3}-\Phi_{1}^{(P)}
-\tilde{\Phi}_{1})^{2}\mu_{1}(\Phi^{(P)}+\tilde{\Phi})\frac{\partial\Phi_{2}^{(P)}}{\partial x},\label{d7}
\end{align}
Then by~\eqref{c1} and~\eqref{d7}, we get
\begin{align}
&\partial_{x}(\Phi_{1}-\Phi_{1}^{(P)})+\mu_{1}(\Phi+\tilde{\Phi})\partial_{t}(\Phi_{1}-\Phi_{1}^{(P)})\notag\\
=&-\frac{\varphi'(\Phi_{2}^{(P)}+\tilde{\Phi}_{2})}{4\gamma\varphi(\Phi_{2}^{(P)}+\tilde{\Phi}_{2})}(\Phi_{3}^{(P)}+\tilde{\Phi}_{3}
-\Phi_{1}^{(P)}-\tilde{\Phi}_{1})\Big(\frac{\partial}{\partial x}(\Phi_{2}-\Phi_{2}^{(P)})+\mu_{1}(\Phi+\tilde{\Phi})\frac{\partial}{\partial t}(\Phi_{2}-\Phi_{2}^{(P)})\Big)\notag\\
&+\frac{\alpha}{2}(1-\frac{(\gamma+1)(\tilde{\Phi}_{1}+\tilde{\Phi}_{3})}{(\gamma+1)\tilde{\Phi}_{1}+(3-\gamma)\tilde{\Phi}_{3}})\mu_{1}
(\tilde{\Phi})(\Phi_{1}-\Phi_{1}^{(P)})\notag\\
&+\frac{\alpha}{2}(1-\frac{(\gamma+1)(\tilde{\Phi}_{1}+\tilde{\Phi}_{3})}{(\gamma+1)\tilde{\Phi}_{1}+(3-\gamma)\tilde{\Phi}_{3}})\mu_{1}
(\tilde{\Phi})(\Phi_{3}-\Phi_{3}^{(P)})\notag\\
&+\frac{\alpha(\gamma-1)(\tilde{\Phi}_{1}+\tilde{\Phi}_{3})}{(\gamma+1)\tilde{\Phi}_{1}+(3-\gamma)\tilde{\Phi}_{3}}\mu_{1}(\Phi
+\tilde{\Phi})(\Phi_{3}-\Phi_{3}^{(P)})\notag\\
&+\frac{(\gamma-1)\varphi'(\Phi_{2}^{(P)}+\tilde{\Phi}_{2})}{16\gamma\varphi(\Phi_{2}^{(P)}+\tilde{\Phi}_{2})}\Big((\Phi_{3}
-\Phi_{3}^{(P)})-(\Phi_{1}-\Phi_{1}^{(P)})\Big)(\Phi_{3}+\tilde{\Phi}_{3}-\Phi_{1}-\tilde{\Phi}_{1})\mu_{1}(\Phi+\tilde{\Phi})
\frac{\partial\Phi_{2}}{\partial x}\notag\\
&+\frac{\alpha}{2}(1-\frac{(\gamma+1)(\tilde{\Phi}_{1}+\tilde{\Phi}_{3})}{(\gamma+1)\tilde{\Phi}_{1}+(3-\gamma)\tilde{\Phi}_{3}})\Big
(\mu_{1}(\Phi+\tilde{\Phi})-\mu_{1}(\tilde{\Phi})\Big)\Big((\Phi_{1}+\Phi_{3})
-(\Phi_{1}^{(P)}+\Phi_{3}^{(P)})\Big)\notag\\
&+\frac{\gamma-1}{16\gamma}\Big(\frac{\varphi'(\Phi_{2}+\tilde{\Phi}_{2})}{\varphi(\Phi_{2}+\tilde{\Phi}_{2})}-\frac{\varphi'
(\Phi_{2}^{(P)}+\tilde{\Phi}_{2})}{\varphi(\Phi_{2}^{(P)}+\tilde{\Phi}_{2})}\Big)(\Phi_{3}+\tilde{\Phi}_{3}-\Phi_{1}-\tilde{\Phi}_{1})^{2}
\mu_{1}(\Phi+\tilde{\Phi})\frac{\partial\Phi_{2}}{\partial x}\notag\\
&-\frac{\varphi'(\Phi_{2}^{(P)}+\tilde{\Phi}_{2})}{4\gamma\varphi(\Phi_{2}^{(P)}+\tilde{\Phi}_{2})}(\Phi_{3}^{(P)}+\tilde{\Phi}_{3}
-\Phi_{1}^{(P)}-\tilde{\Phi}_{1})\Big(\mu_{1}(\Phi+\tilde{\Phi})-\mu_{1}(\Phi^{(P)}+\tilde{\Phi})\Big)\frac{\partial\Phi_{2}^{(P)}}
{\partial t}\notag\\
&+\frac{\alpha}{2}(1-\frac{(\gamma+1)(\tilde{\Phi}_{1}+\tilde{\Phi}_{3})}{(\gamma+1)\tilde{\Phi}_{1}+(3-\gamma)\tilde{\Phi}_{3}})\Big
(\mu_{1}(\Phi+\tilde{\Phi})-\mu_{1}(\Phi^{(P)}+\tilde{\Phi})\Big)\Phi_{1}^{(P)}\notag\\
&+\frac{\alpha}{2}(1-\frac{(3-\gamma)(\tilde{\Phi}_{1}+\tilde{\Phi}_{3})}{(\gamma+1)\tilde{\Phi}_{1}+(3-\gamma)\tilde{\Phi}_{3}})
\Big(\mu_{1}(\Phi+\tilde{\Phi})-\mu_{1}(\Phi^{(P)}+\tilde{\Phi})\Big)\Phi_{3}^{(P)}\notag\\
&-\Big(\mu_{1}(\Phi+\tilde{\Phi})-\mu_{1}(\Phi^{(P)}+\tilde{\Phi})\Big)\partial_{t}\Phi_{1}^{(P)},\label{d8}
\end{align}
where we used
\begin{align*}
&\frac{(\gamma-1)\varphi'(\Phi_{2}+\tilde{\Phi}_{2})}{16\gamma\varphi(\Phi_{2}+\tilde{\Phi}_{2})}(\Phi_{3}
+\tilde{\Phi}_{3}-\Phi_{1}-\tilde{\Phi}_{1})^{2}\mu_{1}(\Phi+\tilde{\Phi})\frac{\partial\Phi_{2}}{\partial x}\notag\\
=&-\frac{\varphi'(\Phi_{2}+\tilde{\Phi}_{2})}{4\gamma\varphi(\Phi_{2}+\tilde{\Phi}_{2})}(\Phi_{3}
+\tilde{\Phi}_{3}-\Phi_{1}-\tilde{\Phi}_{1})(\frac{\partial\Phi_{2}}{\partial x}+\mu_{1}(\Phi+\tilde{\Phi})
\frac{\partial\Phi_{2}}{\partial t}).
\end{align*}
We multiply $F_{1}(x)$ on both sides of~\eqref{d8} and integrate the result along the $1$-st characteristic curve $t=t_{1}(x;\hat{t},\hat{x})$ defined by
\begin{align}
\left\{
\begin{aligned}
&\frac{dt_{1}}{dx}(x;\hat{t},\hat{x})=\mu_{1}(\Phi(t_{1}(x;\hat{t},\hat{x}),x)+\tilde{\Phi}),\\
&t_{1}(\hat{x};\hat{t},\hat{x})=\hat{t}.
\end{aligned}\right.\label{d9}
\end{align}
Noting $T_{0}=L\mathop{\max}\limits_{i=1,2,3}\mathop{\sup}\limits_{\Phi\in \Re}|\mu_{i}(\Phi+\tilde{\Phi})|$, for each points $(\hat{t},\hat{x})\in[t_{*}+T_{0},\tau]\times[0,L]$, the backward curve $t=t_{1}(x;\hat{t},\hat{x})$ will intersect the boundary in a time interval shorter than $T_{0}$, namely,
$$t_{1}(L;\hat{t},\hat{x})\in[\hat{t}-T_{0},\hat{t}]\subseteq[t_{*},\tau],\quad \forall(\hat{t},\hat{x})\in[t_{*}+T_{0},\tau]\times[0,L],$$
and thus we can use estimates~\eqref{d6} on the boundary.

Using~\eqref{a2}, \eqref{A3}, \eqref{B13}-\eqref{a4}, \eqref{b18}-\eqref{B18}, \eqref{c42}, \eqref{d1} and~\eqref{d6}, we have
\begin{align}
&|\Phi_{1}(\hat{t},\hat{x})-\Phi_{1}^{(P)}(\hat{t},\hat{x})|\notag\\
\leq&\frac{|K_{1}|}{F_{1}(\hat{x})}C_{S}\epsilon\xi^{N}+\frac{F_{1}(\hat{x})-1}{F_{1}(\hat{x})}C_{S}\epsilon\xi^{N}
+C\epsilon_{0} C_{S}\epsilon\xi^{N}+C\epsilon C_{S}\epsilon\xi^{N}\notag\\
\leq&C_{S}\epsilon\xi^{N+1}.\label{d10}
\end{align}
\indent Here we choose a constant $0<\xi<1$ satisfying
$$ \xi>1-\frac{1-|K|}{\mathcal{M}}.$$
At the boundary $x=0$, by~\eqref{D4}, we get
\begin{align}
|\Phi_{2}(t,0)-\Phi_{2}^{(P)}(t,0)|\leq |K_{2}|C_{S}\epsilon\xi^{N}.\label{d11}
\end{align}
In the same way, we also get from~\eqref{B18}, \eqref{c2}, \eqref{D4}, \eqref{d4} and~\eqref{d11}
\begin{align}
|\Phi_{2}(\hat{t},\hat{x})-\Phi_{2}^{(P)}(\hat{t},\hat{x})| \leq& |K_{2}|C_{S}\epsilon\xi^{N}+C\epsilon C_{S}\epsilon\xi^{N}\notag\\
\leq&A_{0}C_{S}\epsilon\xi^{N},\label{d12}
\end{align}
where the constant $A_{0}$ satisfies $$A_{0}\geq|K_{2}|+C\epsilon.$$

Since $\hat{t}\in[t_{*}+T_{0},\tau]$ is arbitrary, we proved~\eqref{D2}-\eqref{d2}.

\section{Regularity of the Time-periodic Solution}\label{s5}
\indent\indent In this section, based on the higher regularity of boundary functions $H_{i}(t)(i=1,2,3)$, we will prove that the time-periodic solutions possess higher regularity.\\
\indent For the sake of showing the regularity of $\Phi^{(P)}$, we still employ the same iteration scheme~\eqref{c4}-\eqref{c9} as in~\secref{s3} and prove the following proposition.
\begin{proposition}\label{p2}
For the iteration scheme~\eqref{c4}-\eqref{c9}, assuming that~\eqref{b22} holds, then exists a large enough constant~$C_{R}>0$, such that for any given $l\in \mathbb{N}_{+}$, we have
\begin{align}
&\|\partial_{t}^{2}\Phi_{i}^{(l)}\|_{L^{\infty}}\leq C_{R},\label{e1}\\
&\|\partial_{t}\partial_{x}\Phi_{i}^{(l)}\|_{L^{\infty}}\leq \mathcal{K}C_{R},\label{e3}\\
&\|\partial_{x}^{2}\Phi_{i}^{(l)}\|_{L^{\infty}}\leq \mathcal{K}^{2}C_{R},\label{e5}
\end{align}
under the hypothesis
\begin{align}
&\|\partial_{t}^{2}\Phi_{i}^{(l-1)}\|_{L^{\infty}}\leq C_{R},\label{e7}\\
&\|\partial_{t}\partial_{x}\Phi_{i}^{(l-1)}\|_{L^{\infty}}\leq \mathcal{K}C_{R},\label{e9}\\
&\|\partial_{x}^{2}\Phi_{i}^{(l-1)}\|_{L^{\infty}}\leq \mathcal{K}^{2}C_{R}\label{e11}
\end{align}
with $i=1,2,3.$
\end{proposition}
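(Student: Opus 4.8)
Set $\Theta_i^{(l)}=\partial_t^2\Phi_i^{(l)}=\partial_t\Psi_i^{(l)}$. The plan is to prove \proref{p2} by the same inductive \emph{a priori} estimate used for \proref{p1}, raised one differentiation order, and then to deduce \theref{t4}. Differentiating the transport equations \eqref{c57}, \eqref{c59} (and the analogue for $\Psi_3^{(l)}$) once more in $t$, and differentiating the boundary relations \eqref{c5}, \eqref{c7}, \eqref{c9} twice in $t$, yields a closed linear system for the $\Theta_i^{(l)}$, whose boundary data read $\Theta_1^{(l)}(t,L)=\mathcal H_1''(t)+K_1\Theta_3^{(l-1)}(t,L)$, $\Theta_2^{(l)}(t,0)=\mathcal H_2''(t)+K_2\Theta_1^{(l)}(t,0)$ and $\Theta_3^{(l)}(t,0)=\mathcal H_3''(t)+K_3\Theta_1^{(l-1)}(t,0)$. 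Here assumption \eqref{b22} bounds $\mathcal H_i''$, while the hypotheses \eqref{e7}--\eqref{e11} bound the incoming $\Theta_i^{(l-1)}$ by $C_R$; the contraction structure $|K_1|,|K_3|\le 1$ (used exactly as in \eqref{c49}, with $K<1$ after the reduction indicated below \eqref{c3}) makes the boundary loop $x=L\to x=0\to x=L$ strictly contractive, so the boundary contributions are absorbed for $C_R$ large.

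The main obstacle is the derivative loss from the entropy coupling. Differentiating \eqref{c57} in $t$ threatens to produce genuine third-order derivatives of the entropy $\Phi_2^{(l-1)}$, which the induction hypothesis does not control: the entropy term of \eqref{c57} carries $\partial_x\Psi_2^{(l-1)}+\mu_1\partial_t\Psi_2^{(l-1)}$, whose $t$-derivative brings in $\partial_x\Theta_2^{(l-1)}+\mu_1\partial_t\Theta_2^{(l-1)}$. These are removed exactly as \eqref{c4} was turned into \eqref{c57} through \eqref{CC1}: the entropy balance law \eqref{b9}, in its iterated form \eqref{c6} and its $t$-derivative \eqref{c59}, lets us read $\partial_x\Theta_2^{(l-1)}+\mu_1\partial_t\Theta_2^{(l-1)}$ as the total derivative of $\Theta_2^{(l-1)}$ along the $1$-st characteristic \eqref{c48}. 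Integrating the $\Theta_1^{(l)}$ equation along that same characteristic and integrating this term by parts leaves only $\Theta_2^{(l-1)}$ (bounded by $C_R$ via \eqref{e7}) at the endpoints together with controlled products of first- and second-order derivatives, so no genuine third derivative survives. Because each passage from a $t$-derivative to an $x$-derivative through \eqref{c6} costs one factor $\mu_i$, bounded by $\mathcal K$ in \eqref{B13}, the estimates are graded as $\|\partial_t^2\Phi_i^{(l)}\|_{L^\infty}\le C_R$, $\|\partial_t\partial_x\Phi_i^{(l)}\|_{L^\infty}\le\mathcal K C_R$, $\|\partial_x^2\Phi_i^{(l)}\|_{L^\infty}\le\mathcal K^2C_R$.

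A secondary difficulty is that the equation for $\Theta_1^{(l)}$ carries $\Theta_1^{(l)}$ itself on its right-hand side with a coefficient of size $O(\epsilon_0+\epsilon)$, inherited after one more differentiation from the linear accelerating term in \eqref{c57}, so a direct Gronwall estimate along the $1$-st characteristic does not close. I would repeat the ``estimation by twice integration'' of \eqref{c62}--\eqref{c64}: first multiply by $\mathrm{sgn}(\Theta_1^{(l)})$ to obtain a rough bound $|\Theta_1^{(l)}|\le C(1+C_R)\epsilon$, then recast the differentiated \eqref{c57} in terms of $F_1\Theta_1^{(l)}$ as in \eqref{C64} and integrate from $L$ to $x$ along the $1$-st characteristic. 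The integrating factor $F_1\ge 1$ from \eqref{c42} absorbs the accelerating term, and the smallness $c_-<\epsilon_0$ — which renders the entropy coefficients of size $O(\epsilon_0)$ and $O(\epsilon_0^2)$ as in \eqref{c52}--\eqref{c53} — together with $\epsilon$ small yields $\|\Theta_1^{(l)}\|_{L^\infty}\le C_R$ for $C_R$ chosen large. The same argument with $F_3$ controls $\Theta_3^{(l)}$, while $\Theta_2^{(l)}$ satisfies a genuine transport equation with no accelerating term and is bounded immediately from its boundary data and \eqref{e7}--\eqref{e11}; this establishes \eqref{e1}.

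Finally, the mixed and pure spatial second derivatives follow from the equations without further characteristic analysis. Differentiating \eqref{c4}, \eqref{c6}, \eqref{c8} in $t$ expresses $\partial_t\partial_x\Phi_i^{(l)}$ through $\partial_t^2\Phi_i^{(l)}$ and already-controlled lower-order quantities, giving \eqref{e3} with one factor $\mathcal K$; differentiating once more in $x$ then expresses $\partial_x^2\Phi_i^{(l)}$ through $\partial_t\partial_x\Phi_i^{(l)}$ and lower-order terms, giving \eqref{e5} with the factor $\mathcal K^2$. This closes the induction \eqref{e1}--\eqref{e11}, and, combined with \proref{p1} and the passage to the limit as in \cite{Qu}, yields the $W^{2,\infty}$ regularity of $\Phi^{(P)}$ asserted in \theref{t4}.
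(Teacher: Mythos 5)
Your proposal follows essentially the same route as the paper: differentiate the $\Psi_i^{(l)}$-equations once more in $t$, absorb the would-be third derivatives of the entropy by reading $\partial_x\Upsilon_2^{(l-1)}+\mu_1\partial_t\Upsilon_2^{(l-1)}$ as a total derivative along the first characteristic, close the estimate for $\partial_t^2\Phi_1^{(l)}$ and $\partial_t^2\Phi_3^{(l)}$ via the rough-bound-then-$F_1$/$F_3$-weighted ``twice integration,'' and recover the mixed and pure spatial derivatives algebraically from the equations with one factor of $\mathcal{K}$ each. The only slip is your intermediate rough bound $|\Theta_1^{(l)}|\le C(1+C_R)\epsilon$: since the boundary datum is $\mathcal{H}_1''+K_1\Theta_3^{(l-1)}$, bounded by $M_0+|K_1|C_R$, the rough bound is $O(M_0+C_R)$ rather than $O(\epsilon)$, which is harmless because the self-interaction coefficient is itself $O(\epsilon)$.
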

\begin{proof}
\indent In fact, we utilize the same sequence as in~\secref{s3}. Then by~\proref{p1}, we already have~\eqref{c18}-\eqref{c25} for each $l$ and especially,
\begin{align}
&\|\Phi_{1}^{(l)}\|_{C^{1}}\leq(M_{1}+M_{2})\epsilon,\quad\|\Phi_{1}^{(l-1)}\|_{C^{1}}\leq(M_{1}+M_{2})\epsilon,\label{e13}\\
&\|\Phi_{2}^{(l)}\|_{C^{0}}\leq 2M_{0},~~\|\partial_{t}\Phi_{2}^{(l)}\|_{C^{0}}\leq M_{1}\epsilon,~~\|\partial_{x}\Phi_{2}^{(l)}\|_{C^{0}}\leq M_{2}\epsilon,\label{e14}\\
&\|\Phi_{2}^{(l-1)}\|_{C^{0}}\leq 2M_{0},~~\|\partial_{t}\Phi_{2}^{(l-1)}\|_{C^{0}}\leq M_{1}\epsilon,~~\|\partial_{x}\Phi_{2}^{(l-1)}\|_{C^{0}}\leq M_{2}\epsilon,\label{E14}\\
&\|\Phi_{3}^{(l)}\|_{C^{1}}\leq(M_{1}+M_{2})\epsilon,\quad\|\Phi_{3}^{(l-1)}\|_{C^{1}}\leq(M_{1}+M_{2})\epsilon.\label{e15}
\end{align}
Let
$$\Upsilon_{i}^{(l)}=\partial_{t}\Psi_{i}^{(l)}=\partial_{t}^{2}\Phi_{i}^{(l)},\quad i=1,2,3,~l\in \mathbb{N}_{+}.$$
Firstly, at the boundary $x=L$, using the boundary conditions~\eqref{c5} and the estimates~\eqref{b22}, \eqref{e7}, one has
\begin{align}
|\Upsilon_{1}^{(l)}(t,L)|\leq M_{0}+|K_{1}|C_{R}.\label{e16}
\end{align}
Then we use a method similar to~\eqref{c64} to get
\begin{align}
\|\Upsilon_{1}^{(l)}(t,x)\|_{L^{\infty}}\leq \chi_{1}C_{R},\label{e17}
\end{align}
where constants $0<\chi_{1}<1$ is independent of $l$.

By the equation~\eqref{c57} and the estimates~\eqref{a2}, \eqref{A3}, \eqref{B13}-\eqref{a4}, \eqref{e13}-\eqref{e15} and~\eqref{e17}, we get
\begin{align}
\|\partial_{x}\partial_{t}\Phi_{1}^{(l)}\|_{L^{\infty}}&\leq \mathcal{K}\chi_{1}C_{R}+C\epsilon\notag\\
&\leq \mathcal{K}\chi_{2}C_{R},\label{e18}
\end{align}
where $\chi_{2}\in(\chi_{1},1)$ is a constant independent of $l$.

Next, taking the spatial derivative to~\eqref{c4}, we have
\begin{align*}
\partial_{x}^{2}\Phi_{1}^{(l)}=&-\mu_{1}(\Phi^{(l-1)}+\tilde{\Phi})\partial_{x}\partial_{t}\Phi_{1}^{(l)}\notag\\
&+\frac{(\gamma-1)\varphi'(\Phi_{2}^{(l-1)}+\tilde{\Phi}_{2})}{16\gamma\varphi(\Phi_{2}^{(l-1)}+\tilde{\Phi}_{2})}
(\Phi_{3}^{(l-1)}+\tilde{\Phi}_{3}-\Phi_{1}^{(l-1)}
-\tilde{\Phi}_{1})^{2}\mu_{1}(\Phi^{(l-1)}+\tilde{\Phi})\partial_{x}^{2}\Phi_{2}^{(l-1)}\notag\\
&\frac{(\gamma-1)(\varphi''(\Phi_{2}^{(l-1)}+\tilde{\Phi}_{2})\varphi(\Phi_{2}^{(l-1)}+\tilde{\Phi}_{2})-\varphi'^{2}
(\Phi_{2}^{(l-1)}+\tilde{\Phi}_{2}))}{16\gamma\varphi^{2}(\Phi_{2}^{(l-1)}+\tilde{\Phi}_{2})}(\Phi_{3}^{(l-1)}+\tilde{\Phi}_{3}
-\Phi_{1}^{(l-1)}
-\tilde{\Phi}_{1})^{2}\notag\\
&\cdot\mu_{1}(\Phi^{(l-1)}+\tilde{\Phi})\partial_{x}\Phi_{2}^{(l-1)}(\partial_{x}\Phi_{2}^{(l-1)}+\tilde{\Phi}'_{2})\notag\\
&+\frac{(\gamma-1)\varphi'(\Phi_{2}^{(l-1)}+\tilde{\Phi}_{2})}{16\gamma\varphi(\Phi_{2}^{(l-1)}+\tilde{\Phi}_{2})}
(\Phi_{3}^{(l-1)}+\tilde{\Phi}_{3}-\Phi_{1}^{(l-1)}
-\tilde{\Phi}_{1})^{2}\Big(\frac{\partial\mu_{1}(\Phi^{(l-1)}+\tilde{\Phi})}{\partial\Phi_{1}^{(l-1)}}(\partial_{x}\Phi_{1}^{(l-1)}
+\tilde{\Phi}'_{1})\notag\\
&+\frac{\partial\mu_{1}(\Phi^{(l-1)}+\tilde{\Phi})}{\partial\Phi_{3}^{(l-1)}}(\partial_{x}\Phi_{3}^{(l-1)}
+\tilde{\Phi}'_{3})\Big)\partial_{x}\Phi_{2}^{(l-1)}\notag\\
&+\frac{(\gamma-1)\varphi'(\Phi_{2}^{(l-1)}+\tilde{\Phi}_{2})}{8\gamma\varphi(\Phi_{2}^{(l-1)}+\tilde{\Phi}_{2})}
(\Phi_{3}^{(l-1)}+\tilde{\Phi}_{3}-\Phi_{1}^{(l-1)}
-\tilde{\Phi}_{1})(\partial_{x}\Phi_{3}^{(l-1)}+\tilde{\Phi}'_{3}-\partial_{x}\Phi_{1}^{(l-1)}-\tilde{\Phi}'_{1})\notag\\
&\cdot\mu_{1}(\Phi^{(l-1)}+\tilde{\Phi})\partial_{x}\Phi_{2}^{(l-1)}\notag\\
&-\frac{\alpha^{2}(\gamma^{2}-1)(\tilde{\Phi}_{1}+\tilde{\Phi}_{3})}{2((\gamma+1)\tilde{\Phi}_{1}
+(3-\gamma)\tilde{\Phi}_{3})^{2}}(\mu_{3}(\tilde{\Phi})\tilde{\Phi}_{1}-\mu_{1}(\tilde{\Phi})\tilde{\Phi}_{3})(\mu_{1}(\Phi^{(l-1)}
+\tilde{\Phi})-\mu_{1}(\tilde{\Phi}))\Phi_{1}^{(l-1)}\notag\\
&-\Big(\frac{\partial\mu_{1}(\Phi^{(l-1)}+\tilde{\Phi})}{\partial\Phi_{1}^{(l-1)}}(\partial_{x}\Phi_{1}^{(l-1)}+\tilde{\Phi}'_{1})
+\frac{\partial\mu_{1}(\Phi^{(l-1)}+\tilde{\Phi})}{\partial\Phi_{3}^{(l-1)}}(\partial_{x}\Phi_{3}^{(l-1)}+\tilde{\Phi}'_{3})\Big)
\Psi_{1}^{(l)}\notag\\
&-\frac{\alpha^{2}(\gamma-1)(3-\gamma)(\tilde{\Phi}_{1}+\tilde{\Phi}_{3})}{2((\gamma+1)\tilde{\Phi}_{1}
+(3-\gamma)\tilde{\Phi}_{3})^{2}}(\mu_{3}(\tilde{\Phi})\tilde{\Phi}_{1}-\mu_{1}(\tilde{\Phi})\tilde{\Phi}_{3})\mu_{1}(\Phi^{(l-1)}
+\tilde{\Phi})\Phi_{3}^{(l-1)}\notag\\
&+\frac{\alpha'}{2}(1-\frac{(\gamma+1)(\tilde{\Phi}_{1}+\tilde{\Phi}_{3})}{(\gamma+1)\tilde{\Phi}_{1}+(3-\gamma)\tilde{\Phi}_{3}})
(\mu_{1}(\Phi^{(l-1)}+\tilde{\Phi})-\mu_{1}(\tilde{\Phi}))\Phi_{1}^{(l-1)}\notag\\
&+\frac{\alpha}{2}(1-\frac{(\gamma+1)(\tilde{\Phi}_{1}+\tilde{\Phi}_{3})}{(\gamma+1)\tilde{\Phi}_{1}+(3-\gamma)\tilde{\Phi}_{3}})
\Big(\frac{\partial\mu_{1}(\Phi^{(l-1)}+\tilde{\Phi})}{\partial\Phi_{1}^{(l-1)}}(\partial_{x}\Phi_{1}^{(l-1)}+\tilde{\Phi}'_{1})\notag\\
&+\frac{\partial\mu_{1}(\Phi^{(l-1)}+\tilde{\Phi})}{\partial\Phi_{3}^{(l-1)}}(\partial_{x}\Phi_{3}^{(l-1)}+\tilde{\Phi}'_{3})
-(\frac{\partial\mu_{1}(\tilde{\Phi})}{\partial\tilde{\Phi}_{1}}\tilde{\Phi}'_{1}+\frac{\partial\mu_{1}(\tilde{\Phi})}
{\partial\tilde{\Phi}_{3}}\tilde{\Phi}'_{3})\Big)\Phi_{1}^{(l-1)}\notag\\
&+\frac{\alpha}{2}(1-\frac{(\gamma+1)(\tilde{\Phi}_{1}+\tilde{\Phi}_{3})}{(\gamma+1)\tilde{\Phi}_{1}+(3-\gamma)\tilde{\Phi}_{3}})
(\mu_{1}(\Phi^{(l-1)}+\tilde{\Phi})-\mu_{1}(\tilde{\Phi}))\partial_{x}\Phi_{1}^{(l-1)}\notag\\
&+\frac{\alpha}{2}(1-\frac{(3-\gamma)(\tilde{\Phi}_{1}+\tilde{\Phi}_{3})}{(\gamma+1)\tilde{\Phi}_{1}+(3-\gamma)\tilde{\Phi}_{3}})
\Big(\frac{\partial\mu_{1}(\Phi^{(l-1)}+\tilde{\Phi})}{\partial\Phi_{1}^{(l-1)}}(\partial_{x}\Phi_{1}^{(l-1)}+\tilde{\Phi}'_{1})\notag\\
&+\frac{\partial\mu_{1}(\Phi^{(l-1)}+\tilde{\Phi})}{\partial\Phi_{3}^{(l-1)}}(\partial_{x}\Phi_{3}^{(l-1)}+\tilde{\Phi}'_{3})\Big)
\Phi_{3}^{(l-1)}\notag\\
&+\frac{\alpha}{2}(1-\frac{(\gamma+1)(\tilde{\Phi}_{1}+\tilde{\Phi}_{3})}{(\gamma+1)\tilde{\Phi}_{1}+(3-\gamma)\tilde{\Phi}_{3}})
(\frac{\partial\mu_{1}(\tilde{\Phi})}{\partial\tilde{\Phi}_{1}}\tilde{\Phi}'_{1}+\frac{\partial\mu_{1}(\tilde{\Phi})}
{\partial\tilde{\Phi}_{3}}\tilde{\Phi}'_{3})\Phi_{1}^{(l)}\notag\\
&-\frac{\alpha^{2}(\gamma^{2}-1)(\tilde{\Phi}_{1}+\tilde{\Phi}_{3})}{2((\gamma+1)\tilde{\Phi}_{1}
+(3-\gamma)\tilde{\Phi}_{3})^{2}}(\mu_{3}(\tilde{\Phi})\tilde{\Phi}_{1}-\mu_{1}(\tilde{\Phi})\tilde{\Phi}_{3})\mu_{1}(\tilde{\Phi})
\Phi_{1}^{(l)}\notag\\
&+\frac{\alpha}{2}(1-\frac{(3-\gamma)(\tilde{\Phi}_{1}+\tilde{\Phi}_{3})}{(\gamma+1)\tilde{\Phi}_{1}+(3-\gamma)\tilde{\Phi}_{3}})
\mu_{1}(\Phi^{(l-1)}+\tilde{\Phi})\partial_{x}\Phi_{3}^{(l-1)}\\
&+\frac{\alpha'}{2}(1-\frac{(3-\gamma)(\tilde{\Phi}_{1}+\tilde{\Phi}_{3})}{(\gamma+1)\tilde{\Phi}_{1}+(3-\gamma)\tilde{\Phi}_{3}})
\mu_{1}(\Phi^{(l-1)}+\tilde{\Phi})\Phi_{3}^{(l-1)}\notag\\
&+\frac{\alpha'}{2}(1-\frac{(\gamma+1)(\tilde{\Phi}_{1}+\tilde{\Phi}_{3})}{(\gamma+1)\tilde{\Phi}_{1}+(3-\gamma)\tilde{\Phi}_{3}})
\mu_{1}(\tilde{\Phi})\Phi_{1}^{(l)}\notag\\
&+\frac{\alpha}{2}(1-\frac{(\gamma+1)(\tilde{\Phi}_{1}+\tilde{\Phi}_{3})}
{(\gamma+1)\tilde{\Phi}_{1}+(3-\gamma)\tilde{\Phi}_{3}})\mu_{1}(\tilde{\Phi})\partial_{x}\Phi_{1}^{(l)}.\notag
\end{align*}
Then using~\eqref{a2}, \eqref{A3}, \eqref{B13}-\eqref{a4}, \eqref{e11}-\eqref{e15} and~\eqref{e18}, we get
\begin{align}
\|\partial_{x}^{2}\Phi_{1}^{(l)}\|_{L^{\infty}}\leq &\mathcal{K}^{2}\chi_{2}C_{R}+C\epsilon_{0}^{2}\mathcal{K}^{2}C_{R}+C\epsilon\notag\\
\leq&\mathcal{K}^{2}C_{R}.\label{e23}
\end{align}

At the boundary $x=0$, by~\eqref{b22}, \eqref{c7} and~\eqref{e17}, we have
\begin{align}
|\Upsilon_{2}^{(l)}(t,0)|\leq M_{0}+|K_{2}|\chi_{1}C_{R}.\label{e19}
\end{align}
Then we take the temporal derivative of~\eqref{c59} to get
\begin{align}
&\partial_{x}\Upsilon_{2}^{(l)}+\mu_{2}(\Phi^{(l-1)}+\tilde{\Phi})\partial_{t}\Upsilon_{2}^{(l)}\notag\\
=&-\Big(\frac{\partial^{2}\mu_{2}(\Phi^{(l-1)}+\tilde{\Phi})}{\partial(\Phi_{1}^{(l-1)})^{2}}(\Psi_{1}^{(l-1)})^{2}
+2\frac{\partial^{2}\mu_{2}
(\Phi^{(l-1)}+\tilde{\Phi})}{\partial\Phi_{1}^{(l-1)}\partial\Phi_{3}^{(l-1)}}\Psi_{1}^{(l-1)}\Psi_{3}^{(l-1)}
+\frac{\partial^{2}\mu_{2}(\Phi^{(l-1)}+\tilde{\Phi})}{\partial(\Phi_{3}^{(l-1)})^{2}}(\Psi_{3}^{(l-1)})^{2}\notag\\
&+\big(\frac{\partial\mu_{2}(\Phi^{(l-1)}+\tilde{\Phi})}{\partial\Phi_{1}^{(l-1)}}\Upsilon_{1}^{(l-1)}
+\frac{\partial\mu_{2}(\Phi^{(l-1)}+\tilde{\Phi})}{\partial\Phi_{3}^{(l-1)}}\Upsilon_{3}^{(l-1)}\big)\Big)\Psi_{2}^{(l)}\notag\\
&-\Big(\frac{\partial\mu_{2}(\Phi^{(l-1)}+\tilde{\Phi})}{\partial\Phi_{1}^{(l-1)}}\Psi_{1}^{(l-1)}
+\frac{\partial\mu_{2}(\Phi^{(l-1)}+\tilde{\Phi})}{\partial\Phi_{3}^{(l-1)}}\Psi_{3}^{(l-1)}\Big)\Upsilon_{2}^{(l)}.\label{e20}
\end{align}
With the aid of the Gronwall's inequality, \eqref{e7}, \eqref{e13}-\eqref{e15} and~\eqref{e19}, we get
\begin{align}
\|\Upsilon_{2}^{(l)}(t,x)\|_{L^{\infty}}\leq& \Big(M_{0}+|K_{2}|\chi_{1}C_{R}+C\epsilon^{3}+C\epsilon C_{R}\Big)e^{C\epsilon L}\notag\\
\leq&\chi_{3}C_{R},\label{e21}
\end{align}
where the constant $0<\chi_{1}<\chi_{3}<1$.

Using the equations~\eqref{c59} and the estimates~\eqref{B13}, \eqref{e13}-\eqref{e15}, one has
\begin{align}
\|\partial_{x}\partial_{t}\Phi_{2}^{(l)}\|_{L^{\infty}}&\leq \mathcal{K}\chi_{3}C_{R}+C\epsilon^{2}\notag\\
&\leq \mathcal{K}\chi_{4}C_{R},\label{e22}
\end{align}
where the constant $0<\chi_{3}<\chi_{4}<1$.

We take the spatial derivative of~\eqref{c6} to get
\begin{align*}
\partial_{x}^{2}\Phi_{2}^{(l)}=&-\Big(\frac{\partial\mu_{2}(\Phi^{(l-1)}+\tilde{\Phi})}{\partial\Phi_{1}^{(l-1)}}
(\partial_{x}\Phi_{1}^{(l-1)}+\tilde{\Phi}'_{1})
+\frac{\partial\mu_{2}(\Phi^{(l-1)}+\tilde{\Phi})}{\partial\Phi_{3}^{(l-1)}}(\partial_{x}\Phi_{3}^{(l-1)}+\tilde{\Phi}'_{3})\Big)
\Psi_{2}^{(l)}\notag\\
&-\mu_{2}(\Phi^{(l-1)}+\tilde{\Phi})\partial_{x}\partial_{t}\Phi_{2}^{(l)}
\end{align*}
and then by~\eqref{B13}, \eqref{e13}-\eqref{e15} and~\eqref{e22}, one has
\begin{align}
\|\partial_{x}^{2}\Phi_{2}^{(l)}\|_{L^{\infty}}\leq &\mathcal{K}^{2}\chi_{4}C_{R}+C\epsilon\notag\\
\leq&\mathcal{K}^{2}C_{R}.\label{e24}
\end{align}
We complete the proof of~\eqref{e1}-\eqref{e5}.

\end{proof}

$\mathbf{The~Proof~of~Theorem~2.3.}$ By~\eqref{e1}-\eqref{e5}, we know that $\{\Phi^{(l)}\}_{l=1}^{\infty}$ is uniformly $W^{2,\infty}$ bounded and then $weak^{*}$ convergent. Moreover, noting that $\{\Phi^{(l)}\}_{l=1}^{\infty}$ converges strongly to $\Phi^{(P)}$ in $C^{1}$, so we get the $W^{2,\infty}$ regularity of $\Phi^{(P)}$.
\section{ \texorpdfstring{$C^{1}$}~~Stability of the Time-periodic Solution}\label{s6}
\indent\indent In this section, we will give the proof of~\theref{t5}.

Actually, we have got the following $C^{0}$ exponential convergence in~\theref{t2}:
\begin{align}
&\|\Phi_{1}(t,\cdot)-\Phi_{1}^{(P)}(t,\cdot)\|_{C^{0}}\leq C_{S}\epsilon\xi^{N},
~\|\Phi_{3}(t,\cdot)-\Phi_{3}^{(P)}(t,\cdot)\|_{C^{0}}\leq C_{S}\epsilon\xi^{N},\label{f1}\\
&\|\Phi_{2}(t,\cdot)-\Phi_{2}^{(P)}(t,\cdot)\|_{C^{0}}\leq A_{0}C_{S}\epsilon\xi^{N-1},\quad\forall t\in[NT_{0},(N+1)T_{0}),\forall l\in \mathbb{N}_{+},\label{f2}
\end{align}
which show that
\begin{align}
&\|\Phi_{1}(t,\cdot)-\Phi_{1}^{(P)}(t,\cdot)\|_{C^{0}}\leq C_{S}\epsilon\xi^{N+1},
~\|\Phi_{3}(t,\cdot)-\Phi_{3}^{(P)}(t,\cdot)\|_{C^{0}}\leq C_{S}\epsilon\xi^{N+1},\label{f3}\\
&\|\Phi_{2}(t,\cdot)-\Phi_{2}^{(P)}(t,\cdot)\|_{C^{0}}\leq A_{0}C_{S}\epsilon\xi^{N},\quad\forall t\in[(N+1)T_{0},(N+2)T_{0}),\forall l\in \mathbb{N}_{+}.\label{f4}
\end{align}
Moreover, by~\theref{t1}, \theref{t4}, and~\lemref{l1}, we have
\begin{align}
&\|\Phi_{1}\|_{C^{1}}\leq C_{L}\epsilon,  ~\|\Phi_{3}\|_{C^{1}}\leq C_{L}\epsilon,\label{f5}\\
\|\Phi_{2}\|_{C^{0}(D)}\leq& C_{L},~\max\{\|\partial_{t}\Phi_{2}\|_{C^{0}(D)},\|\partial_{x}\Phi_{2}\|_{C^{0}(D)}\}\leq C_{L}\epsilon,\label{F5}
\end{align}
\begin{align}
&\|\Phi_{1}^{(P)}\|_{C^{1}}\leq C_{E}\epsilon, ~\|\Phi_{3}^{(P)}\|_{C^{1}}\leq C_{E}\epsilon,\label{f6}\\
\|\Phi_{2}^{(P)}\|_{C^{0}(D)}\leq & C_{E}M_{0},~\max\{\|\partial_{t}\Phi_{2}^{(P)}\|_{C^{0}(D)},\|\partial_{t}\Phi_{2}^{(P)}\|_{C^{0}(D)}\}\leq C_{E}\epsilon,\label{F6}
\end{align}
\begin{align}
\|\Phi^{(P)}\|_{W^{2,\infty}}\leq(1+\mathcal{K})^{2}C_{R}.\label{f7}
\end{align}
Noting the continuity, we will get the estimates for the convergence of the first derivatives inductively, namely, for each $N\in \mathbb{N}_{+}$ and $\tau\in[(N+1)T_{0},(N+2)T_{0}]$, we will prove
\begin{align}
&\|\partial_{t}\Phi_{1}(t,\cdot)-\partial_{t}\Phi_{1}^{(P)}(t,\cdot)\|_{C^{0}}\leq C_{S}^{*}\epsilon\xi^{N+1},
~\|\partial_{t}\Phi_{3}(t,\cdot)-\partial_{t}\Phi_{3}^{(P)}(t,\cdot)\|_{C^{0}}\leq C_{S}^{*}\epsilon\xi^{N+1},\label{f8}\\
&\|\partial_{t}\Phi_{2}(t,\cdot)-\partial_{t}\Phi_{2}^{(P)}(t,\cdot)\|_{C^{0}}\leq A_{1}C_{S}^{*}\epsilon\xi^{N},\quad\forall t\in[(N+1)T_{0},\tau],\forall l\in \mathbb{N}_{+},\label{f9}\\
&\|\partial_{x}\Phi_{1}(t,\cdot)-\partial_{x}\Phi_{1}^{(P)}(t,\cdot)\|_{C^{0}}\leq \mathcal{K}C_{S}^{*}\epsilon\xi^{N+1},
~\|\partial_{x}\Phi_{3}(t,\cdot)-\partial_{x}\Phi_{3}^{(P)}(t,\cdot)\|_{C^{0}}\leq \mathcal{K}C_{S}^{*}\epsilon\xi^{N+1},\label{f10}\\
&\|\partial_{x}\Phi_{2}(t,\cdot)-\partial_{x}\Phi_{2}^{(P)}(t,\cdot)\|_{C^{0}}\leq \mathcal{K}A_{1}C_{S}^{*}\epsilon\xi^{N},\quad\forall t\in[(N+1)T_{0},\tau],\forall l\in \mathbb{N}_{+}\label{f11}
\end{align}
under the assumption
\begin{align}
&\|\partial_{t}\Phi_{1}(t,\cdot)-\partial_{t}\Phi_{1}^{(P)}(t,\cdot)\|_{C^{0}}\leq C_{S}^{*}\epsilon\xi^{N},
~\|\partial_{t}\Phi_{3}(t,\cdot)-\partial_{t}\Phi_{3}^{(P)}(t,\cdot)\|_{C^{0}}\leq C_{S}^{*}\epsilon\xi^{N},\label{f12}\\
&\|\partial_{t}\Phi_{2}(t,\cdot)-\partial_{t}\Phi_{2}^{(P)}(t,\cdot)\|_{C^{0}}\leq A_{1}C_{S}^{*}\epsilon\xi^{N-1},\quad\forall t\in[NT_{0},\tau],\forall l\in \mathbb{N}_{+},\label{f13}\\
&\|\partial_{x}\Phi_{1}(t,\cdot)-\partial_{x}\Phi_{1}^{(P)}(t,\cdot)\|_{C^{0}}\leq \mathcal{K}C_{S}^{*}\epsilon\xi^{N},
~\|\partial_{x}\Phi_{3}(t,\cdot)-\partial_{x}\Phi_{3}^{(P)}(t,\cdot)\|_{C^{0}}\leq \mathcal{K}C_{S}^{*}\epsilon\xi^{N},\label{f14}\\
&\|\partial_{x}\Phi_{2}(t,\cdot)-\partial_{x}\Phi_{2}^{(P)}(t,\cdot)\|_{C^{0}}\leq \mathcal{K}A_{1}C_{S}^{*}\epsilon\xi^{N-1},\quad\forall t\in[NT_{0},\tau],\forall l\in \mathbb{N}_{+},\label{f15}
\end{align}
where the constant $A_{1}>0$ will be determined later.

Let
$$\Psi_{i}=\partial_{t}\Phi_{i},\quad \Xi_{i}=\partial_{x}\Phi_{i},\quad i=1,2,3$$
and
$$\Psi_{i}^{(P)}=\partial_{t}\Phi_{i}^{(P)},\quad \Xi_{i}^{(P)}=\partial_{x}\Phi_{i}^{(P)},i=1,2,3.$$
Taking the temporal derivative on boundary conditions~\eqref{b12}-\eqref{b13}, we get
\begin{align}
&\Psi_{1}(t,L)=H'_{1}(t)+K_{1}\Psi_{3}(t,L), \quad t>0,\label{f16}\\
&\Psi_{2}(t,0)=H'_{2}(t)+K_{2}\Psi_{1}(t,0),\quad~ t>0,\label{f17}\\
&\Psi_{3}(t,0)=H'_{3}(t)+K_{3}\Psi_{1}(t,0),\quad~ t>0 \label{f18}
\end{align}
and
\begin{align}
&\Psi_{1}^{(P)}(t,L)=H'_{1}(t)+K_{1}\Psi_{3}^{(P)}(t,L),\quad t>0,\label{f19}\\
&\Psi_{2}^{(P)}(t,0)=H'_{2}(t)+K_{2}\Psi_{1}^{(P)}(t,0),\quad~ t>0,\label{f20}\\
&\Psi_{3}^{(P)}(t,0)=H'_{3}(t)+K_{3}\Psi_{1}^{(P)}(t,0),\quad~t>0. \label{f21}
\end{align}
Then, at the boundary $x=L$, it follows from~\eqref{f12}
\begin{align}
\mathop{\sup}\limits_{t\in[NT_{0},\tau]}|\Psi_{1}(t,L)-\Psi_{1}^{(P)}(t,L)|\leq|K_{1}|C_{S}^{*}\epsilon\xi^{N}.\label{f22}
\end{align}

In the domain $D$, similar to the proof method of~\eqref{d10}, we get
\begin{align}
|\Psi_{1}(\hat{t},\hat{x})-\Psi_{1}^{(P)}(\hat{t},\hat{x})|\leq&\frac{|K_{1}|C_{S}^{*}\epsilon\xi^{N}}{F_{1}(\hat{x})}+C\epsilon C_{S}\epsilon\xi^{N}+C\epsilon C_{S}^{*}\epsilon\xi^{N}
+\frac{F_{1}(\hat{x})-1}{F_{1}(\hat{x})}C_{S}^{*}\epsilon\xi^{N}\notag\\
&+\mathop{\sup}\limits_{\Phi\in\Re}|\nabla\mu_{1}|(1+\mathcal{K})^{2}C_{R}C_{S}\epsilon\xi^{N}
+C\epsilon_{0} C_{S}^{*}\epsilon\xi^{N}\notag\\
\leq&\chi_{5}C_{S}^{*}\epsilon\xi^{N+1},\label{f25}
\end{align}
where the constant $0<\chi_{5}<1$ and we choose the constant $C_{S}^{*}$ satisfying
$$C_{S}^{*}>100\mathop{\max}\limits_{1\leq i\leq3}\mathop{\sup}\limits_{\Phi\in\Re}|\nabla\mu_{i}|(1+\mathcal{K})^{2}C_{R}C_{S}+C_{S}
-100\alpha_{*}(1+\frac{\gamma+1}{2}\mathcal{K}c_{-})\mathcal{K}C_{S}.$$

At the boundary $x=0$, it follows from~\eqref{f12}
\begin{align}
&\mathop{\sup}\limits_{t\in[NT_{0},\tau]}|\Psi_{2}(t,0)-\Psi_{2}^{(P)}(t,0)|\leq |K_{2}|C_{S}^{*}\epsilon
\xi^{N}.\label{f23}
\end{align}
We take the temporal derivative of~\eqref{c2} and subtract it from the equations of $\Psi_{2}^{(P)}$ to obtain
\begin{align}
&\partial_{x}(\Psi_{2}-\Psi_{2}^{(P)})+\mu_{2}(\Phi+\tilde{\Phi})\partial_{t}(\Psi_{2}-\Psi_{2}^{(P)})\notag\\
=&-\Big(\big(\frac{\partial\mu_{2}(\Phi+\tilde{\Phi})}{\partial\Phi_{1}}-\frac{\partial\mu_{2}(\Phi^{(P)}+\tilde{\Phi})}
{\partial\Phi_{1}^{(P)}}\big)\Psi_{1}^{(P)}+\big(\frac{\partial\mu_{2}(\Phi+\tilde{\Phi})}{\partial\Phi_{3}}
-\frac{\partial\mu_{2}(\Phi^{(P)}+\tilde{\Phi})}{\partial\Phi_{3}^{(P)}}\big)\Psi_{3}^{(P)}\Big)\Psi_{2}^{(P)}\notag\\
&-\Big(\frac{\partial\mu_{2}(\Phi+\tilde{\Phi})}{\partial\Phi_{1}}(\Psi_{1}-\Psi_{1}^{(P)})
+\frac{\partial\mu_{2}(\Phi+\tilde{\Phi})}{\partial\Phi_{3}}(\Psi_{3}-\Psi_{3}^{(P)})\Big)\Psi_{2}^{(P)}\notag\\
&-\Big(\frac{\partial\mu_{2}(\Phi+\tilde{\Phi})}{\partial\Phi_{1}}\Psi_{1}+\frac{\partial\mu_{2}(\Phi
+\tilde{\Phi})}{\partial\Phi_{3}}\Psi_{3}\Big)(\Psi_{2}-\Psi_{2}^{(P)})\notag\\
&-\Big(\mu_{2}(\Phi+\tilde{\Phi})-\mu_{2}(\Phi^{(P)}+\tilde{\Phi})\Big)\partial_{t}\Psi_{2}^{(P)},\label{f26}
\end{align}
then by~\eqref{f1}, \eqref{f5}-\eqref{f7}, \eqref{f12}-\eqref{f13} and~\eqref{f23}, we get
\begin{align}
&|\Psi_{2}(\hat{t},\hat{x})-\Psi_{2}^{(P)}(\hat{t},\hat{x})|\notag\\
\leq& |K_{2}|C_{S}^{*}\epsilon\xi^{N}+\mathop{\sup}\limits_{\Phi\in\Re}|\nabla\mu_{2}|(1+\mathcal{K})^{2}
C_{R}C_{S}\epsilon\xi^{N}\notag\\
&+C\epsilon C_{S}\epsilon\xi^{N}+C\epsilon C_{S}^{*}\epsilon\xi^{N}\notag\\
\leq&\chi_{6}A_{1}C_{S}^{*}\epsilon\xi^{N},\label{f27}
\end{align}
where the constant $0<\chi_{6}<1$ and we choose the constant $A_{1}$ satisfying $$\chi_{6}A_{1}\geq|K_{2}|+\frac{1}{100}+C\epsilon.$$

By the arbitrariness of $\hat{t}\in[(N+1)T_{0},\tau]$, we obtain
\begin{align}
&\|\Psi_{1}(t,\cdot)-\Psi_{1}^{(P)}(t,\cdot)\|_{C^{0}}\leq \chi_{5}C_{S}^{*}\epsilon\xi^{N+1},\quad\forall t\in[(N+1)T_{0},\tau],\forall l\in \mathbb{N}_{+},\label{f28}\\
&\|\Psi_{2}(t,\cdot)-\Psi_{2}^{(P)}(t,\cdot)\|_{C^{0}}\leq \chi_{6}A_{1}C_{S}^{*}\epsilon\xi^{N},\quad\forall t\in[(N+1)T_{0},\tau],\forall l\in \mathbb{N}_{+}.\label{f29}
\end{align}

With the aid of~\eqref{c1} and~\eqref{d7}, we get
\begin{align}
\Xi_{1}-\Xi_{1}^{(P)}=&-\mu_{1}(\Phi^{(P)}+\tilde{\Phi})
(\Psi_{1}-\Psi_{1}^{(P)})\notag\\
&+\frac{(\gamma-1)\varphi'(\Phi_{2}^{(P)}+\tilde{\Phi}_{2})}{16\gamma\varphi(\Phi_{2}^{(P)}+\tilde{\Phi}_{2})}
(\Phi_{3}^{(P)}+\tilde{\Phi}_{3}-\Phi_{1}^{(P)}-\tilde{\Phi}_{1})^{2}\mu_{1}(\Phi^{(P)}
+\tilde{\Phi})(\frac{\partial\Phi_{2}}{\partial x}-\frac{\partial\Phi_{2}^{(P)}}{\partial x})\notag\\
&+\frac{\alpha}{2}(1-\frac{(\gamma+1)(\tilde{\Phi}_{1}+\tilde{\Phi}_{3})}{(\gamma+1)\tilde{\Phi}_{1}+(3-\gamma)\tilde{\Phi}_{3}})
\mu_{1}(\tilde{\Phi})(\Phi_{1}+\Phi_{3}-\Phi_{1}^{(P)}-\Phi_{3}^{(P)})\notag\\
&+\frac{\alpha(\gamma-1)(\tilde{\Phi}_{1}+\tilde{\Phi}_{3})}{(\gamma+1)\tilde{\Phi}_{1}+(3-\gamma)\tilde{\Phi}_{3}}\mu_{1}(\Phi^{(P)}
+\tilde{\Phi})(\Phi_{3}-\Phi_{3}^{(P)})\notag\\
&+\frac{\alpha}{2}(1-\frac{(\gamma+1)(\tilde{\Phi}_{1}+\tilde{\Phi}_{3})}{(\gamma+1)\tilde{\Phi}_{1}+(3-\gamma)\tilde{\Phi}_{3}})
\Big(\mu_{1}(\Phi^{(P)}+\tilde{\Phi})-\mu_{1}(\tilde{\Phi})\Big)(\Phi_{1}+\Phi_{3}-\Phi_{1}^{(P)}-\Phi_{3}^{(P)})\notag\\
&+\frac{(\gamma-1)\varphi'(\Phi_{2}^{(P)}+\tilde{\Phi}_{2})}{16\gamma\varphi(\Phi_{2}^{(P)}+\tilde{\Phi}_{2})}
(\Phi_{3}^{(P)}+\tilde{\Phi}_{3}-\Phi_{1}^{(P)}-\tilde{\Phi}_{1})^{2}\Big(\mu_{1}(\Phi+
\tilde{\Phi})-\mu_{1}(\Phi^{(P)}+\tilde{\Phi})\Big)\frac{\partial\Phi_{2}}{\partial x}\notag\\
&+\frac{\gamma-1}{16\gamma}(\frac{\varphi'(\Phi_{2}+\tilde{\Phi}_{2})}{\varphi(\Phi_{2}+\tilde{\Phi}_{2})}
-\frac{\varphi'(\Phi_{2}^{(P)}+\tilde{\Phi}_{2})}{\varphi(\Phi_{2}^{(P)}+\tilde{\Phi}_{2})})(\Phi_{3}+\tilde{\Phi}_{3}-\Phi_{1}
-\tilde{\Phi}_{1})^{2}\mu_{1}(\Phi+\tilde{\Phi})\frac{\partial\Phi_{2}}{\partial x}\notag\\
&+\frac{(\gamma-1)\varphi'(\Phi_{2}^{(P)}+\tilde{\Phi}_{2})}{16\gamma\varphi(\Phi_{2}^{(P)}+\tilde{\Phi}_{2})}
\Big((\Phi_{3}+\tilde{\Phi}_{3}-\Phi_{1}-\tilde{\Phi}_{1})^{2}-(\Phi_{3}^{(P)}
+\tilde{\Phi}_{3}-\Phi_{1}^{(P)}-\tilde{\Phi}_{1})^{2}\Big)\notag\\
&\cdot\mu_{1}(\Phi+\tilde{\Phi})\frac{\partial\Phi_{2}}{\partial x}\notag\\
&+\frac{\alpha}{2}(1-\frac{(\gamma+1)(\tilde{\Phi}_{1}+\tilde{\Phi}_{3})}{(\gamma+1)\tilde{\Phi}_{1}+(3-\gamma)\tilde{\Phi}_{3}})
\Big(\mu_{1}(\Phi+\tilde{\Phi})-\mu_{1}(\Phi^{(P)}+\tilde{\Phi})\Big)(\Phi_{1}+\Phi_{3})\notag\\
&+\frac{\alpha(\gamma-1)(\tilde{\Phi}_{1}+\tilde{\Phi}_{3})}{(\gamma+1)\tilde{\Phi}_{1}+(3-\gamma)\tilde{\Phi}_{3}}\Big(\mu_{1}(\Phi+\tilde{\Phi})
-\mu_{1}(\Phi^{(P)}+\tilde{\Phi})\Big)\Phi_{3}\notag\\
&-\Big(\mu_{1}(\Phi+\tilde{\Phi})-\mu_{1}(\Phi^{(P)}+\tilde{\Phi})\Big)\Psi_{1}\notag,
\end{align}
then by~\eqref{a2}, \eqref{A3}, \eqref{B13}-\eqref{a4}, \eqref{f3}, \eqref{f5}-\eqref{f6}, \eqref{f15} and~\eqref{f28}, we have
\begin{align}
\|\Xi_{1}(t,\cdot)-\Xi_{1}^{(P)}(t,\cdot)\|_{C^{0}}\leq&\mathcal{K}\chi_{5}C_{S}^{*}\epsilon\xi^{N+1}
+C\epsilon_{0}^{2}\mathcal{K}A_{1}C_{S}^{*}\epsilon\xi^{N}
-\alpha_{*}(1+\frac{\gamma+1}{2}\mathcal{K}c_{-})\mathcal{K}C_{S}\epsilon\xi^{N+1}\notag\\
&+C\epsilon_{0}C_{S}\epsilon\xi^{N+1}+C\epsilon C_{S}\epsilon\xi^{N+1}\notag\\
\leq&\mathcal{K}C_{S}^{*}\epsilon\xi^{N+1}.\label{f30}
\end{align}
It follows from the equations~\eqref{c2}
\begin{align*}
\Xi_{2}-\Xi_{2}^{(P)}=-\Big(\mu_{2}(\Phi+\tilde{\Phi})-\mu_{2}(\Phi^{(P)}+\tilde{\Phi})\Big)\Psi_{2}^{(P)}-\mu_{2}(\Phi+\tilde{\Phi})
(\Psi_{2}-\Psi_{2}^{(P)}),
\end{align*}
and using~\eqref{B13}, \eqref{f3}, \eqref{F6} and~\eqref{f29}, we have
\begin{align}
\|\Xi_{2}(t,\cdot)-\Xi_{2}^{(P)}(t,\cdot)\|_{C^{0}}\leq& \mathcal{K}\chi_{6}A_{1}C_{S}^{*}\epsilon\xi^{N}+C\epsilon C_{S}\epsilon\xi^{N+1}\notag\\
\leq&\mathcal{K}A_{1}C_{S}^{*}\epsilon\xi^{N}.\label{f31}
\end{align}
This indicates that we complete the proof of~\eqref{f8}-\eqref{f11}.

\section{Acknowledge}\label{s7}

 \indent\indent Peng Qu is supported in part by the National Natural Science Foundation of China Grants No.~12122104, 11831011, and Shanghai Science and Technology Programs 21ZR1406000, 21JC1400600. Huimin Yu is supported in part by the National Natural Science Foundation of China Grant No. 12271310 and Natural Science Foundation of Shandong Province ZR2022MA088.

\end{sloppypar}
\end{document}